\renewcommand{\textbf}[1]{\begingroup\bfseries\mathversion{bold}#1\endgroup}%gras dans les formules maths
\title{About the quadratic Szeg\H{o} hierarchy}
\author{Joseph \bsc{Thirouin}}
\date{March 31\up{st}, 2018}
\DeclareMathOperator{\im}{Im}
\DeclareMathOperator{\re}{Re}
\DeclareMathOperator{\tr}{Tr}
\DeclareMathOperator{\rg}{rk}
\DeclareMathOperator{\res}{Res}
\DeclareMathOperator{\spec}{Sp}
\DeclareMathOperator{\Ran}{Ran}
\DeclareMathOperator{\Jp}{\! \mathscr{J}^{(2)}}
\DeclareMathOperator{\J}{\! \mathscr{J}^{(0)}}
\DeclareMathOperator{\Ji}{\! \mathscr{J}^{(3)}}
\DeclareMathOperator{\Z}{\! \mathscr{J}^{(1)}}
\DeclareMathOperator{\Kp}{\! \mathscr{K}^{(2)}}
\DeclareMathOperator{\K}{\! \mathscr{K}^{(0)}}
\DeclareMathOperator{\Ki}{\! \mathscr{K}^{(1)}}
\newtheorem{thm}{Theorem}
\newtheorem{prop}{Proposition}[section]
\newtheorem{lemme}[prop]{Lemma}
\newtheorem{cor}[prop]{Corollary}
\theoremstyle{definition}
\newtheorem*{déf}{Definition}
\theoremstyle{remark}
\newtheorem{rem}{Remark}
\newtheorem*{eg}{Example}
\begin{document}

\renewcommand{\refname}{Bibliography}
\renewcommand{\abstractname}{Abstract}
\maketitle

\begin{abstract}
The purpose of this paper is to go further into the study of the quadratic Szeg\H{o} equation, which is the following Hamiltonian PDE :
\[i \partial_t u = 2J\Pi(|u|^2)+\bar{J}u^2, \quad u(0, \cdot)=u_0,\]
where $\Pi$ is the Szeg\H{o} projector onto nonnegative modes, and $J=J(u)$ is the complex number given by $J=\int_\mathbb{T}|u|^2u$. We exhibit an infinite set of new conservation laws $\{\ell_k\}$ which are in involution. These laws give us a better understanding of the ``turbulent'' behavior of certain rational solutions of the equation : we show that if the orbit of a rational solution is unbounded in some $H^s$, $s>\frac{1}{2}$, then one of the $\ell_k$'s must be zero. As a consequence, we characterise growing solutions which can be written as the sum of two solitons.
\par
\textbf{MSC 2010 :} 37K10, 35B40.
\par
\textbf{Keywords :} 
Szeg\H{o} equation, complete integrability, growth of Sobolev norms.
%35B40 : Asymptotic behavior of solutions
%37K10 : Completely integrable systems, integrability tests, bi-Hamiltonian structures, hierarchies (KdV, KP, Toda, etc.)
\end{abstract}

\section{Introduction}

\paragraph{The equation and its Hamiltonian structure.}
In this paper, we consider the following quadratic Szeg\H{o} equation on the torus $\mathbb{T}=(\mathbb{R}/2\pi\mathbb{Z})$ :
\begin{equation}\label{quad}
i\partial_tu=2J\Pi(|u|^2)+\bar{J}u^2,
\end{equation}
where $u:(t,x)\in\mathbb{R}\times\mathbb{T}\mapsto u(t)\in\mathbb{C}$, $J$ is a complex number depending on $u$ and given by $J(u):=\int_\mathbb{T}|u|^2u$, and $\Pi$ is the Szeg\H{o} projector onto functions with only nonnegative modes :
\[ \Pi\left(\sum_{k\in \mathbb{Z}}a_ke^{ikx}\right)=\sum_{k=0}^{+\infty}a_ke^{ikx}.\]
In particular, $\Pi$ acts on $L^2(\mathbb{T})$ equipped with its usual inner product $(f\vert g):=\int_\mathbb{T}f\bar{g}$. We call $L^2_+(\mathbb{T})$ the closed subspace of $L^2$ which is made of square-integrable functions on $\mathbb{T}$ whose Fourier series is supported on nonnegative frequencies. Then $\Pi$ induces the orthogonal projection from $L^2$ onto $L^2_+$. In the sequel, if $G$ is a subspace of $L^2$, we denote by $G_+$ the subspace $G\cap L^2_+$ of $L^2_+$.

Equation \eqref{quad} appears to be a Hamiltonian PDE : consider $L_+^2$ as the phase space, endowed with the standard symplectic structure given by $\omega(h_1,h_2):=\im (h_1\vert h_2)$. The Hamiltonian associated to \eqref{quad} is then the following functional :
\[ \mathcal{H}(u):=\frac{1}{2}|J(u)|^2=\left|\int_\mathbb{T}|u|^2u\right|^2.\]
Indeed, if $u$ is regular enough (say $u\in L^4_+$), and $h\in L^2_+$, we see that $\langle d\mathcal{H}(u),h\rangle=\re (2J|u|^2+\bar{J}u^2\vert h)=\omega(h\vert X_\mathcal{H}(u))$, where $X_{\mathcal{H}}(u):=-2iJ\Pi(|u|^2)-i\bar{J}u^2\in L^2_+$ is called the symplectic gradient of $\mathcal{H}$. Equation \eqref{quad} can be restated as 
\begin{equation}\label{quad-ham}
\dot{u}=X_{\mathcal{H}}(u),
\end{equation}
where the dot stands for a time-derivative : in other words, \eqref{quad} is the flow of the vector field $X_\mathcal{H}$.

If now $\mathcal{F}$ is some densely-defined differentiable functional on $L^2_+$, and if $u$ is a smooth solution of \eqref{quad-ham}, then
\[\frac{d}{dt}\mathcal{F}(u)=\langle d\mathcal{F}(u),\dot{u}\rangle=\omega ( \dot{u},X_\mathcal{F}(u))=\omega ( X_\mathcal{H}(u),X_\mathcal{F}(u)).\]
Defining the Poisson bracket $\{\mathcal{H},\mathcal{F}\}$ to be the functional given by $\{\mathcal{H},\mathcal{F}\}=\omega(X_\mathcal{H},X_\mathcal{F})$, the evolution of $\mathcal{F}$ along flow lines of \eqref{quad-ham} is thus given by the equation $\dot{\mathcal{F}}=\{\mathcal{H},\mathcal{F}\}$. In particular, $\dot{\mathcal{H}}=\{\mathcal{H},\mathcal{H}\}=0$, which means that the Hamiltonian $\mathcal{H}$ is conserved (at least for smooth solutions). Hence the factor $J$ in \eqref{quad} only evolves through its argument, explaining the terminology of ``quadratic equation''. Two other conservation laws arise from the invariances of $\mathcal{H}$ : the mass $Q$ and the momentum $M$ defined by
\begin{align*}
&Q(u):=\int_\mathbb{T}|u|^2,\\
&M(u):=\int_\mathbb{T}\bar{u}Du, \quad D:=-i\partial_x.
\end{align*}
We have $\{\mathcal{H},Q\}=\{\mathcal{H},M\}=0$. Moreover, as $u$ only has nonnegative modes, these conservation laws control the $H^{1/2}$ regularity of $u$, namely $(Q+M)(u)\simeq \|u\|_{H^{1/2}}^2$.

Observe that replacing the variable $e^{ix}\in\mathbb{T}$ by $z\in\mathbb{D}$ in the Fourier series induces an isometry between $L^2_+$ and the Hardy space $\mathbb{H}^2(\mathbb{D})$, which is the set of holomorphic functions on the unit open disc $\mathbb{D}:=\{z\in\mathbb{C}\mid |z|<1\}$ whose trace on the boundary $\partial\mathbb{D}$ lies in $L^2$. Therefore, we will often consider solutions of equation \eqref{quad} as functions of $t\in\mathbb{R}$ and $z\in\mathbb{D}$.

\paragraph{Invariant manifolds.}
Equation \eqref{quad} was first introduced in \cite{thirouin2}, following the seminal work of Gérard and Grellier on the cubic Szeg\H{o} equation \cite{ann, GGtori, explicit, livrePG} :
\begin{equation}\label{cubic}
i\partial_tu=\Pi(|u|^2u).
\end{equation}
As \eqref{cubic}, equation \eqref{quad} can be considered as a toy model of a nonlinear non-dispersive equation, whose study is expected to give hints on physically more relevant equations, such as the conformal flow on $\mathbb{S}^3$ \cite{Bizon} or the cubic Lowest-Landau-Level (LLL) equation \cite{LLL}.

In \cite{thirouin2, thirouin-prog}, using the formalism of equation \eqref{cubic}, the author proved that there is a Lax pair structure associated to the quadratic equation \eqref{quad}, that we are now going to recall, as well as some of its consequences.

If $u\in H^{1/2}_+(\mathbb{T})$, we define the Hankel operator of symbol $u$ by $H_u:L^2_+\to L^2_+$, $h\mapsto \Pi(u\bar{h})$. It is a bounded $\mathbb{C}$-antilinear operator over $L^2_+$, and $H_u^2$ is trace class (hence compact), selfadjoint and positive. Consequently, we can write its spectrum as a decreasing sequence of nonnegative eigenvalues
\[ \rho_1^2(u)> \rho_2^2(u)>\cdots> \rho_n^2(u)> \cdots \longrightarrow 0,\]
each of them having some multiplicity greater than $1$. Analogous to the family of Hankel operators is the one of Toeplitz operators : given a symbol $b\in L^\infty(\mathbb{T})$, we define $T_b:L^2_+\to L^2_+$, $h\mapsto \Pi(bh)$, which is $\mathbb{C}$-linear and bounded on $L^2_+$. Its adjoint is  $(T_b)^*=T_{\bar{b}}$. A special Toeplitz operator is called the (right) shift $S:=T_{e^{ix}}$. Thus we can define another operator which turns out to be of great importance in the study of \eqref{quad} : for $u\in H^{1/2}_+$, the shifted Hankel operator is defined by $K_u:=H_uS$. An easy computation shows that $K_u$ also satisfies $K_u=S^*H_u=H_{S^*u}$. As a consequence,
\begin{equation}\label{huku}
K_u^2(h)=H_u^2(h)-(h\vert u)u,\quad \forall h\in L^2_+.
\end{equation}
$K_u^2$ is compact as well, selfadjoint and positive, hence we can denote its eigenvalues by the decreasing sequence $\sigma_1^2(u)> \cdots >\sigma_n^2(u)> \cdots\to 0$. In fact, \eqref{huku} leads to a more accurate interlacement property :
\begin{equation}\label{interlacement}
\rho_1^2(u)\geq \sigma_1^2(u)\geq \rho_2^2(u)\geq \sigma_2^2(u)\geq \cdots \geq \rho_n^2(u)\geq \sigma_n^2(u) \geq \cdots \longrightarrow 0,
\end{equation}
where there cannot be two consecutive equality signs.

The idea of a Lax pair is to look at the evolution of a solution $t\mapsto u(t)$ of \eqref{quad} by associating to each $u(t)$ an operator $L_{u(t)}$ acting on some Hilbert space, and by computing the evolution of this operator rather than that of the function $u(t)$ itself. First of all, thanks to the conservation laws $\mathcal{H}$, $Q$ and $M$, it can be shown that the flow of \eqref{quad} is well defined on every $H^s_+(\mathbb{T})$ for $s>\frac{1}{2}$ \cite{thirouin2}. We refer to solutions belonging to theses spaces as \emph{smooth solutions}. The statement of the Lax pair theorem is then the following :
\begin{thm}[\cite{thirouin2, thirouin-prog}]
Let $t\mapsto u(t)$ be a smooth solution of the quadratic Szeg\H{o} equation \eqref{quad}. Then the evolution of $H_{u(t)}$ and $K_{u(t)}$ is given by
\begin{align*}
\frac{d}{dt}K_u&=B_uK_u-K_uB_u,\\
\frac{d}{dt}H_u&=B_uH_u-H_uB_u+i\bar{J}(u\vert\cdot )u,
\end{align*}
where $B_u:=-i(T_{\bar{J}u}+T_{J\bar{u}})$ is a bounded anti-selfadjoint operator over $L^2_+$.
\end{thm}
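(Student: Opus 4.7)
The plan is to exploit the $\mathbb{C}$-linearity of $u\mapsto H_u$ and $u\mapsto K_u$ and to reduce everything to two algebraic identities between Toeplitz and (shifted) Hankel operators. Since these maps are $\mathbb{C}$-linear in the symbol, differentiating gives $\frac{d}{dt}H_{u(t)} = H_{\dot u(t)}$ and $\frac{d}{dt}K_{u(t)} = K_{\dot u(t)}$; substituting \eqref{quad} yields
\[\frac{d}{dt}H_u = -2iJ\,H_{\Pi(|u|^2)} - i\bar{J}\,H_{u^2}, \qquad \frac{d}{dt}K_u = -2iJ\,K_{\Pi(|u|^2)} - i\bar{J}\,K_{u^2}.\]
The goal is then to recognize these right-hand sides as $[B_u,H_u] + i\bar{J}(u\vert\cdot)u$ and $[B_u,K_u]$, respectively.

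The first ingredient is the elementary identity
\[T_{\bar u}H_u \;=\; H_u T_u \;=\; H_{\Pi(|u|^2)},\]
proved by expanding $\Pi(u\bar h) = u\bar h - (I-\Pi)(u\bar h)$ inside $T_{\bar u}H_u h$: the term $\Pi(\bar u\,(I-\Pi)(u\bar h))$ vanishes because $\bar u$ has nonpositive and $(I-\Pi)(u\bar h)$ strictly negative Fourier support. The second, less trivial ingredient is the rank-one identity
\[T_u H_u + H_u T_{\bar u} \;=\; H_{u^2} + (u\vert\cdot)\,u,\]
which I would verify by a direct Fourier expansion: writing $u=\sum_{k\geq 0}u_k e^{ikx}$, both sides applied to $h$ have $n$-th Fourier coefficient equal to $\sum_{l\geq 0}\bar h_l\bigl((u^2)_{n+l} + u_n u_l\bigr)$, the ``diagonal'' term $u_nu_l$ being precisely what produces the rank-one correction $(u\vert h)\,u$. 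An analogous computation for $K_u = S^*H_u$ shows that this diagonal term disappears after shifting by $S^*$, giving the cleaner
\[T_u K_u + K_u T_{\bar u} \;=\; K_{u^2}.\]

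With these identities in hand, the commutator $[B_u,H_u]$ is essentially bookkeeping, but the antilinearity of $H_u$ must be handled with care: since $H_u(cf) = \bar c\,H_uf$ for $c\in\mathbb{C}$, one finds $H_uB_u = iJ\,H_uT_u + i\bar J\,H_uT_{\bar u}$, whereas $B_uH_u = -i\bar J\,T_uH_u - iJ\,T_{\bar u}H_u$. The first identity collapses the $-iJ\,T_{\bar u}H_u$ piece into $-iJ\,H_uT_u$, which combines with the $-iJ\,H_uT_u$ coming from $H_uB_u$ to yield $-2iJ\,H_{\Pi(|u|^2)}$; the rank-one identity then converts $-i\bar J(T_uH_u + H_uT_{\bar u})$ into $-i\bar J\,H_{u^2} - i\bar J(u\vert\cdot)u$. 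Summing gives exactly $\frac{d}{dt}H_u - i\bar J(u\vert\cdot)u$. The $K_u$-equation runs verbatim, except that in the absence of a rank-one term one recovers the pure commutator $\dot K_u = [B_u,K_u]$. The main obstacle is the rank-one identity for $H_u$; once it is established, the sign bookkeeping forced by antilinearity of $H_u$ and $K_u$ determines everything, and explains why the $H_u$-equation carries an inhomogeneous term while the $K_u$-equation does not.
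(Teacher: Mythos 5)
Your proof is correct. The paper itself states this theorem without proof, citing \cite{thirouin2, thirouin-prog}, so there is no in-paper argument to compare against; the route you take is the standard Toeplitz--Hankel calculus of the Szeg\H{o} literature, and it is the same style of algebra that reappears within this paper in the proof of Theorem~\ref{commutation-sigma}. Both of your key lemmas, $T_{\bar u}H_u = H_u T_u = H_{\Pi(|u|^2)}$ and $T_u H_u + H_u T_{\bar u} = H_{u^2} + (u\vert\cdot)\,u$, are right; I checked the Fourier computation behind the second: for $u,h\in L^2_+$,
\[
\bigl(T_uH_u h + H_uT_{\bar u}h\bigr)_n = \sum_{l\geq 0}\bar h_l\left(\sum_{j=0}^n u_j u_{n+l-j} + \sum_{j=n}^{n+l}u_j u_{n+l-j}\right) = \sum_{l\geq 0}\bar h_l\bigl((u^2)_{n+l} + u_n u_l\bigr),
\]
the double-counted diagonal $j=n$ producing the rank-one correction; under the shift $K_u=S^*H_u$ the two index ranges become the disjoint $\{0,\dots,n\}$ and $\{n+1,\dots,n+1+l\}$, so the diagonal term vanishes and $T_uK_u+K_uT_{\bar u}=K_{u^2}$. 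Your sign bookkeeping, which uses the antilinearity rule $H_u(ch)=\bar c\,H_u h$ to obtain $H_uB_u = iJ\,H_uT_u + i\bar J\,H_uT_{\bar u}$ (with the $J$ and $\bar J$ swapped relative to $B_uH_u$), is correct and is indeed what makes $T_{\bar u}H_u$ pair with $H_uT_u$ to give $-2iJ H_{\Pi(|u|^2)}$, while $T_uH_u$ pairs with $H_uT_{\bar u}$ to produce the inhomogeneous rank-one term that is absent in the $K_u$ equation.
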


Only the first identity concerning $K_u$ is a rigorous Lax pair, but the second one turns out \cite{thirouin-prog} to give helpful informations about $H_u$ as well. In particular, we have the following corollary :
\begin{cor}\label{coro-lax}
If $t\mapsto u(t)$ is a smooth solution of \eqref{quad}, then $\rg (K_u)$ and $\rg (H_u)$ are conserved. For any $j\geq 1$, $\sigma_j^2(u(t))$ is also conserved.
\end{cor}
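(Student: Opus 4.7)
My plan is to exploit the genuine Lax pair for $K_u$ to obtain an isospectrality statement for $K_u^2$, and then to deduce the conservation of $\rg H_u$ from the algebraic identity~\eqref{huku} via a dichotomy argument.

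Since $t\mapsto B_{u(t)}$ is a smooth family of bounded anti-selfadjoint operators on $L^2_+$, standard linear ODE theory produces a unique $C^1$ family of unitaries $U(t)\colon L^2_+\to L^2_+$ with $\dot U(t)=B_{u(t)}U(t)$ and $U(0)=\mathrm{Id}$. Differentiating $U(t)^*K_{u(t)}U(t)$ and using $\dot K_u=[B_u,K_u]$ together with $B_u^*=-B_u$, the four resulting terms cancel, so this antilinear operator is independent of $t$, giving $K_{u(t)}=U(t)K_{u_0}U(t)^*$. Squaring yields $K_{u(t)}^2=U(t)K_{u_0}^2U(t)^*$, a unitary equivalence of compact, self-adjoint, nonnegative \emph{linear} operators. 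Therefore $\sigma_j^2(u(t))\equiv\sigma_j^2(u_0)$ for every $j\geq 1$, and $\rg K_u$ is conserved.

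For $\rg H_u$, the identity~\eqref{huku} presents $H_u^2$ as a rank-one nonnegative perturbation of $K_u^2$, whence $\Ran H_u^2=\Ran K_u^2+\mathbb{C} u$. Consequently $\rg H_u$ equals $\rg K_u$ when $u\in\Ran K_u^2$, and $\rg K_u+1$ otherwise; it therefore suffices to show that this membership condition is preserved along the flow. Setting $W:=\Ran K_{u_0}^2$ and $v(t):=U(t)^*u(t)$, the previous step gives $\Ran K_{u(t)}^2=U(t)W$, so the condition becomes $v(t)\in W$. A short computation from $\dot U=B_uU$ and the formula for $X_\mathcal{H}$ yields $\dot v(t)=-iJ(t)V(t)v(t)$ with $V(t):=U(t)^*H_{u(t)}U(t)$. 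Whenever $v(t)\in W$, the equality $\Ran H_{u(t)}=\Ran K_{u(t)}^2=U(t)W$ forces $V(t)W\subset W$, so the linear vector field of the ODE for $v$ is tangent to $W$ at every point of $W$. By uniqueness of solutions to linear ODEs (and the time-reversibility of the Hamiltonian flow), this tangency propagates and yields $v(t)\in W$ for all $t$ iff $v(0)\in W$, completing the argument.

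The main difficulty lies in this final step: since the $H_u$-equation is only a \emph{perturbed} Lax pair, $H_u^2$ is not isospectral and the invariance of $W$ must be argued algebraically. The tangency condition $V(t)W\subset W$ is itself \emph{equivalent} to $v(t)\in W$, an apparent circularity that must be resolved by carefully running the ODE-uniqueness argument on the fixed finite-dimensional subspace $W$.
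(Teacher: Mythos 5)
Your treatment of $K_u$ is correct and standard: the exact Lax pair $\dot K_u=[B_u,K_u]$ produces a unitary $U(t)$ with $K_{u(t)}=U(t)K_{u_0}U(t)^*$, so $K_{u(t)}^2$ is unitarily equivalent to $K_{u_0}^2$ and both $\sigma_j^2(u(t))$ and $\rg K_{u(t)}$ are conserved.

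The argument for $\rg H_u$ has a genuine gap, and the circularity you yourself flag in the final paragraph is not, in fact, resolved by ``carefully running the ODE-uniqueness argument.'' The trouble is that the condition $V(t)W\subset W$ is a \emph{closed} condition on $t$, not an open one, so knowing it holds whenever $v(t)\in W$ gives no forward-in-time information. Concretely, set $I:=\{t\geq 0 : v(s)\in W\text{ for all }s\in[0,t]\}$. This set is closed (since $W$ is closed and $v$ continuous), and on $I$ you do have $V(s)W\subset W$, so the linear equation $\dot v=-iJ(s)V(s)v$ restricted to $[0,T]$ (with $I=[0,T]$) preserves $W$ and recovers $v(T)\in W$ — but that is already known. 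Nothing forces $T=+\infty$: for $t$ slightly larger than $T$ you have no control on $(I-P_W)V(t)P_W$, and the flow may push $v$ out of $W$. The toy example with $W=\mathbb{R}\times\{0\}\subset\mathbb{R}^2$, $v(0)=(1,0)$, and $A(t)=\begin{pmatrix}0&0\\ t&0\end{pmatrix}$ exhibits exactly this failure: $A(0)W\subset W$ yet $v(t)=(1,t^2/2)\notin W$ for every $t\neq 0$. Tangency of a time-dependent vector field at a single time, or on a closed interval of times, does not propagate.

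What closes the argument is a \emph{scalar} closed ODE rather than a subspace-invariance claim. Writing $u_\infty^K$ for the orthogonal projection of $u$ onto $\ker K_u^2$, one has $\rg H_u=\rg K_u$ if and only if $u_\infty^K=0$. Formula~\eqref{evol-uk} with $\sigma_\infty=0$, together with the colinearity $w_\infty^K=(1\vert u)\,u_\infty^K$ (see the remark after Lemma~\ref{formulation-alternative}), gives
\[
\frac{d}{dt}\|u_\infty^K\|_{L^2}^2 = 2\im\bigl(J(t)\,(1\vert u(t))\bigr)\,\|u_\infty^K\|_{L^2}^2,
\]
a linear scalar ODE whose solution either vanishes identically or never vanishes; this is the dichotomy your argument is trying to reach. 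The paper reaches the same conclusion through the conserved quantity $\ell_\infty=\|u_\infty^K\|_{L^2}^2\,(2Q-|(u\vert 1)|^2)$ and the lower bound $2Q-|(u\vert 1)|^2\geq Q>0$. Either route replaces the untenable subspace-tangency step with a quantitative, intrinsically closed estimate, which is the missing ingredient in your proof.
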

This corollary is of particular interest when $H_u$ has finite rank. In that case, since $K_u=H_uS$, we have $\rg (K_u)\leq \rg (H_u)$. Because $\rg(H_u)=\rg(H_u^2)$ and the same for $K_u$ and $K_u^2$, we must have by \eqref{huku} that $\rg (K_u)\in\{\rg(H_u),\rg(H_u)-1\}$. Therefore, for $d\in\mathbb{N}$, we designate by $\mathcal{V}(d)$ the set of symbols $u\in H^{1/2}_+$ such that $\rg(H_u)+\rg(K_u)=d$.

It turns out that $\mathcal{V}(d)$ can be explicitely characterized (see \cite{ann}) : it is the set of rational functions of the variable $z$ of the form
\[ u(z)=\frac{A(z)}{B(z)},\]
where $A$ and $B$ are complex polynomials, such that $A\wedge B=1$, $B(0)=1$ and $B$ has no root in the closed disc $\overline{\mathbb{D}}$, and such that
\begin{itemize}
\item (case $d=2N$) the degree of $B$ is exactly $N$ and the degree of $A$ is at most $N-1$,
\item (case $d=2N+1$) the degree of $A$ is exactly $N$ and the degree of $B$ is at most $N$.
\end{itemize}
Since functions of $\mathcal{V}(d)$ obviously belong to $C^\infty_+$, they give rise to smooth solutions of \eqref{quad}, and by the previous corollary, $\mathcal{V}(d)$ is left invariant by the flow of the quadratic Szeg\H{o} equation.

Geometrically speaking, $\mathcal{V}(d)$ is a complex manifold of dimension $d$. Moreover, restricting the scalar product $(\cdot\vert \cdot)$ to the tangent space $T_u\mathcal{V}(d)$ for each $u\in \mathcal{V}(d)$ defines a Hermitian metric on $\mathcal{V}(d)$ whose imaginary part induces a symplectic structure on the $2d$-dimensional real manifold $\mathcal{V}(d)$. In other words, $\mathcal{V}(d)$ is a Kähler manifold.

\paragraph{Additional conservation laws.} It is a natural question to ask whether the finite-dimensional ODE induced by \eqref{quad} on $\mathcal{V}(d)$ is integrable or not, in the sense of the classical Hamiltonian mechanics. The celebrated Arnold-Jost-Liouville-Mineur theorem \cite{arnold, jost, liouville, mineur} states that this problem first consists in finding $d$ conservation laws (for a $2d$-dimensional manifold) that are generically independent and in involution (\emph{i.e.} such that $\{F,G\}=0$ for any choice of $F$, $G$ among these laws).

In the case of the cubic Szeg\H{o} equation \eqref{cubic}, the $\mathcal{V}(d)$'s are also invariant by the flow, and such conservation laws were first found in \cite{ann}. Relying on the fact that $H_u$ and $K_u$ satisfy an exact Lax pair, it can be proved \cite{GGtori} that both the $\rho_j^2$'s and the $\sigma_k^2$'s are generically independent conservation laws for solutions of \eqref{cubic}, and they satisfy in addition
\[ \{\rho_j^2,\rho_k^2\}=0,\quad \{\sigma_j^2,\sigma_k^2\}=0,\quad \{\rho_j^2,\sigma_k^2\}=0,\]
for any choice of indices $j,k\geq 1$.

In our case, Corollary \ref{coro-lax} states that the $\sigma_k^2$'s are conservation laws for \eqref{quad}. But the $\rho_j^2$'s are no more conserved, that is why the Lax pair theorem only provides $\lfloor d/2\rfloor$ conservation laws on $\mathcal{V}(d)$. The purpose of this paper is then to investigate and find the missing ones, to get the full quadratic Szeg\H{o} hierarchy.

We can now state the main theorem of this paper. Let $u\in H^{1/2}_+$, and recall that
\[ \sigma_1^2(u)>\sigma_2^2(u)>\cdots >\sigma_k^2(u)>\cdots\]
is the decreasing list of the distinct eigenvalues of $K_u^2$. For $k\geq 1$, we set $F_u(\sigma_j(u)):=\ker (K_u^2-\sigma_k^2(u)I)$, and we introduce
\begin{align*}
u_k^K&:=\mathbbm{1}_{\{\sigma_k^2(u)\}}(K_u^2)(u), \\
w_k^K&:=\mathbbm{1}_{\{\sigma_k^2(u)\}}(K_u^2)(\Pi(|u|^2)),
\end{align*}
in the sense of the functional calculus. In other terms, $u_k$ (resp. $w_k$) is the orthogonal projection of $u$ (resp. $\Pi(|u|^2)$) onto the finite-dimensional subspace $F_u(\sigma_k)$ of $L^2_+$. Finally, we set
\[ \ell_k(u):= (2Q+\sigma_k^2)\|u_k^K\|^2_{L^2}-\|w_k^K\|^2_{L^2}.\]
By convention, we call $\ell_\infty$ the quantity that we obtain by replacing $\sigma_k^2$ by $0$ in the above functional (thus considering the projection of $u$ and $\Pi(|u|^2)$ onto the kernel of $K_u^2$).

The main result reads as follows :

\begin{thm}\label{invol-thm}
We have the following identities on $H^{1/2}_+$ :
\[ \{\ell_j,\ell_k\}=0,\quad \{\ell_j,\sigma_k^2\}=0,\quad \{\sigma_j^2,\sigma_k^2\}=0,\]
for any $j,k\geq 1$.

Furthermore, the $\ell_k$'s are conservation laws for the quadratic Szeg\H{o} equation \eqref{quad}.
\end{thm}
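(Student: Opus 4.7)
The plan is to encode the $\ell_k$'s as residues of a single one-parameter spectral generating function, prove Poisson involution at that level, and extract the stated identities by residue calculus. Following the template of \cite{GGtori} for the cubic equation, I would introduce
\[
\mathcal{G}(\lambda, u) := (2Q(u)+\lambda)\,\bigl((\lambda I - K_u^2)^{-1} u \vert u\bigr) - \bigl((\lambda I - K_u^2)^{-1} \Pi(|u|^2) \vert \Pi(|u|^2)\bigr),
\]
for $\lambda$ outside the spectrum of $K_u^2$. Using the spectral decomposition $K_u^2 = \sum_k \sigma_k^2(u)\,P_k$, where $P_k$ denotes the orthogonal projector onto $F_u(\sigma_k)$, a direct computation shows that $\mathcal{G}(\lambda, u)$ has simple poles at each $\lambda = \sigma_k^2(u)$ with
\[
\res_{\lambda = \sigma_k^2(u)} \mathcal{G}(\lambda, u) = \ell_k(u),
\]
the prefactor $2Q + \lambda$ automatically taking the value $2Q + \sigma_k^2$ at the pole. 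Moreover, the expansion at infinity reads $\mathcal{G}(\lambda, u) = Q + Q^2\lambda^{-1} + (|J|^2 - Q^3)\lambda^{-2} + O(\lambda^{-3})$, so that $\mathcal{H} = \tfrac12 |J|^2$ and $Q$ both lie in the polynomial algebra generated by the $\ell_k$'s and $\sigma_k^2$'s via the partial-fraction expansion of $\mathcal{G}(\lambda, \cdot)$.

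The next step is to compute the symplectic gradient $X_{\mathcal{G}(\lambda)}(u)$ by combining the resolvent identity $\delta(\lambda I - K_u^2)^{-1} = (\lambda I - K_u^2)^{-1}\,\delta K_u^2\,(\lambda I - K_u^2)^{-1}$ with \eqref{huku}, which re-expresses $\delta K_u^2$ through $\delta u$, $H_u$, $K_u$ and the shift $S$. This yields an explicit formula for $X_{\mathcal{G}(\lambda)}$ as an element of $L^2_+$ built from $u$, $\Pi(|u|^2)$ and the resolvent $(\lambda I - K_u^2)^{-1}$. The core of the proof, and its main obstacle, is then to establish the master involution identity
\[
\{\mathcal{G}(\lambda), \mathcal{G}(\mu)\}(u) = 0 \qquad \text{for all } \lambda, \mu \text{ in the resolvent set of } K_u^2.
\]
I would prove it by direct manipulation of $\omega(X_{\mathcal{G}(\lambda)}, X_{\mathcal{G}(\mu)})$, using the intertwining relations $K_u = S^* H_u = H_{S^* u}$, the structural identity \eqref{huku}, and the resolvent identity for both spectral parameters, in order to reduce the expression to a form that is manifestly symmetric in $(\lambda, \mu)$ and hence vanishes by the skew-symmetry of the Poisson bracket. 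The identity $\{\sigma_j^2, \sigma_k^2\} = 0$ is already a consequence of the Lax pair for $K_u$, and $\{\ell_j, \sigma_k^2\} = 0$ will follow from the same style of computation applied to $\{\mathcal{G}(\lambda), \sigma_k^2\} = 0$.

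Taking residues at $\lambda = \sigma_j^2(u)$ and $\mu = \sigma_k^2(u)$ in the master identity then yields $\{\ell_j, \ell_k\} = 0$; the dependence of the poles on $u$ is harmless, since the $\sigma_k^2$'s Poisson-commute with the $\ell_k$'s. Conservation of each $\ell_k$ under the flow of \eqref{quad} is then immediate: since $\mathcal{H}$ belongs to the polynomial algebra generated by $Q$, the $\ell_k$'s and the $\sigma_k^2$'s, the bracket $\{\mathcal{H}, \ell_k\}$ vanishes automatically. If this indirect route turns out to be impractical, a fallback is to compute $\dot{\ell_k} = \{\mathcal{H}, \ell_k\}$ directly by using the evolution equations of $K_u$ and $H_u$ from the Lax pair theorem, keeping careful track of the anomalous rank-one term $i\bar J(u \vert \cdot)u$ in the evolution of $H_u$.
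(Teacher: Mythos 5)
Your architecture matches the paper's: encode the $\ell_k$'s as residues (or pole coefficients) of a one-parameter spectral generating function, prove involution at that level, and extract the finite commutation relations afterwards. The residue computation for $\mathcal{G}(\lambda,u)$ is correct; expanding $(2Q+\lambda)=(2Q+\sigma_k^2)+(\lambda-\sigma_k^2)$ gives $\mathcal{G}(\lambda)=Q+\sum_k \ell_k/(\lambda-\sigma_k^2)$, which is the paper's $\mathscr{R}(x)$ up to the change of variable $x=1/\lambda$. Your observation that residue extraction is legitimate because $\{\sigma_k^2,\cdot\}$ vanishes on the relevant functionals is the right justification, and your ``fallback'' for conservation is exactly Proposition~\ref{conservation-preuve} in the paper (the ``indirect'' version would be circular unless the involution is established first, so only the fallback is logically self-contained at the point where you invoke it).

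The real issue is that you declare victory precisely where the work begins. The step ``prove $\{\mathcal{G}(\lambda),\mathcal{G}(\mu)\}=0$ by direct manipulation of $\omega(X_{\mathcal{G}(\lambda)},X_{\mathcal{G}(\mu)})$, reducing to a form manifestly symmetric in $(\lambda,\mu)$'' is asserted, not performed, and there is no evidence it can be performed in the way you suggest. The Hamiltonian vector field (see \eqref{champ_F}) is a sum of five operator-valued terms built from $u$, $w^0$, $w^1$, $H_u$ and the resolvent; the symplectic pairing of two such expressions is a combinatorial explosion with no obvious symmetric normal form, and \eqref{huku} plus $K_u=S^*H_u$ plus resolvent identities do not by themselves collapse it. The paper does not go this route. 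It first re-expresses $\mathscr{R}(x)$ through resolvents of $H_u^2$ (the functionals $\J,\Ji,\Jp$), then invokes the cubic Szeg\H{o} machinery: the Birkhoff coordinates $\omega=\sum_j d(\rho_j^2/2)\wedge d\varphi_j+\sum_k d(\sigma_k^2/2)\wedge d\psi_k$ on generic rational states, the known brackets $\{\J(x),\J(y)\}=0$ from \cite{ann} and $\{\K(x),\K(y)\}=0$ from \cite{Xu3}, and the bridge identity $\K(x)=\J(x)-x|\Z(x)|^2/\J(x)$. Lemmas~\ref{ZZ} and~\ref{JiJi} are explicit action-angle computations, and Theorem~\ref{FF} assembles them. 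Without importing this external structure, the direct symplectic computation you sketch is an open problem, not a proof. Similarly, $\{\mathcal{G}(\lambda),\sigma_k^2\}=0$ is not a bookkeeping corollary of ``the same style of computation'': the paper establishes it by exhibiting an explicit Lax pair for the $\mathscr{F}(x)$-flow, $\frac{d}{dt}K_u=[B_u^x,K_u]$ with $B_u^x$ given in \eqref{aux}, which is itself a nontrivial computation (Theorem~\ref{commutation-sigma}). Your proposal leaves both of these essential steps as promissory notes.
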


Let us comment on this result :
\begin{itemize}
\item The question of finding additional conservation laws was first raised in \cite{ann} for the cubic Szeg\H{o} equation on $\mathbb{T}$, at a time when the Lax pair for $K_u$ and the conservation laws $\sigma_k^2$ had not been discovered. These laws were found to be the $J_{2n}(u):=(H_u^{2n}(1)\vert 1)$, $n\geq 1$. A similar inquiry turned out to be necessary in the study of related equations for which only one Lax pair is available, such as the cubic Szeg\H{o} equation on $\mathbb{R}$ \cite{PocoGrowth}, or the cubic Szeg\H{o} equation with a linear perturbative term on $\mathbb{T}$ \cite{Xu, Xu3}.

\item We will prove in the beginning of Section \ref{quatre} that the knowledge of the $\ell_j$'s and the $\sigma_k^2$'s enables to reconstruct the a priori conservation laws $M$, $Q$ and $\mathcal{H}$. We have for instance
\begin{align*}
Q^2 &=\sum_{k\geq 1} \ell_k,\\
|J|^2&=\sum_{k\geq 1} (Q+\sigma_k^2)\ell_k.
\end{align*}
However, the question of the generic independence of the $\ell_k$'s is left unanswered. 

\item The proof of Theorem \ref{invol-thm} relies on generating series. For rational data (\emph{i.e.} having a finite sequence of $\sigma_k^2$ of cardinality $N$) and an appropriate $x\in \mathbb{R}$, we will show that
\[ \sum_{k=1}^N\frac{\ell_k}{1-x\sigma_k^2}=\frac{x^2\! \mathscr{J}^{(4)}(x)^2-x|\Ji (x)|^2-Q^2}{\J (x)},\]
where for $m\geq 0$,
\[\! \mathscr{J}^{(m)}(x):=((I-xH_u^2)^{-1}(H_u^m(1))\vert 1)=\sum_{j=0}^{+\infty}x^jJ_{m+2j},\]
and $J_p:=(H_u^p(1)\vert 1)$ as above. Using the commutation relations between $\rho_j^2$ and $\sigma_k^2$ as well as the action-angle coordinates coming from the cubic Szeg\H{o} equation \cite{livrePG}, we will find that
\[\left\lbrace \sum_{k=1}^N\frac{\ell_k}{1-x\sigma_k^2},\sum_{k=1}^N\frac{\ell_k}{1-y\sigma_k^2}\right\rbrace =0,\]
for all $x\neq y$.
\end{itemize}

\paragraph{Connection with the growth of Sobolev norms for rational solutions.} An important question in the study of Hamiltonian PDEs is the question of the existence of ``turbulent'' trajectories : provided that $M$ and $Q$ are conserved, does there exist initial data $u_0\in C^\infty_+$ giving rise to solutions of \eqref{quad} such that
\[ \limsup_{t\to +\infty} \|u(t)\|_{H^s}=+\infty\]
for some $s>\frac{1}{2}$ ?

A positive answer to this question is given in \cite{thirouin2}, where however it is shown that such a growth cannot happen faster than exponentially in time. An explicit computation tells us that this rate of growth is indeed achieved for solutions on $\mathcal{V}(3)$ satisfying the following condition :
\begin{equation}\label{resonance-V3} |J|^2=Q^3.
\end{equation}
More precisely, solutions of the form
\[ u(z)=b+\frac{cz}{1-pz},\]
with $b,c,p\in\mathbb{C}$, $c\neq 0$, $b-cp\neq 0$ and $|p|<1$, which also satisfy \eqref{resonance-V3}, are such that for any $s>1/2$, there exists a constant $C_s>0$ such that $\|u(t)\|_{H^s}\sim C_se^{C_s|t|}$.

As in \cite{Xu3}, it appears that the possible growth of Sobolev norms can be detected in terms of the new conservation laws $\ell_k$.

\begin{prop}\label{CNS}
Let $v^n$ be some sequence in $\mathcal{V}(d)$ for some $d\in\mathbb{N}$. Assume that it is bounded in $H^{1/2}_+$ and that $\spec K_{v^n}^2$ does not depend on $n$. Then the following statements are equivalent :
\begin{enumerate}
\item There exists $s_0>\frac{1}{2}$ such that $v^n$ is unbounded in $H^{s_0}_+$.
\item For \emph{every} $s>\frac{1}{2}$, $v^n$ is unbounded in $H^{s}_+$.
\item There exists a subsequence $\{n_k\}$ and $v_{\mathrm{bad}}\in \mathcal{V}(d')$ (where $d'\leq d-1$ if $d$ is even, and $d'\leq d-2$ if $d$ is odd), such that
\[ v^{n_k}\rightharpoonup v_{\mathrm{bad}} \quad \text{in }H^{\frac{1}{2}}_+.\]
\end{enumerate}
\end{prop}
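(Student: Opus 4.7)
The plan is to prove the cyclic chain $\mathrm{(ii)} \Rightarrow \mathrm{(i)} \Rightarrow \mathrm{(iii)} \Rightarrow \mathrm{(ii)}$, the first implication being immediate. The two nontrivial implications rely respectively on the continuity of the spectrum of $K_u^2$ under strong $H^{1/2}$ convergence, and on the explicit rational parameterization of $\mathcal{V}(d)$ recalled in the introduction.

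For $\mathrm{(iii)} \Rightarrow \mathrm{(ii)}$: I would assume $\mathrm{(iii)}$ and suppose, for a contradiction, that $v^n$ is bounded in some $H^s_+$, $s>\frac{1}{2}$. The compact embedding $H^s(\mathbb{T}) \hookrightarrow H^{1/2}(\mathbb{T})$ then produces a sub-subsequence of $v^{n_k}$ converging strongly in $H^{1/2}_+$, necessarily to $v_{\mathrm{bad}}$. Using the estimate $\|H_u\|_{\mathrm{HS}}^2 \lesssim \|u\|_{H^{1/2}}^2$ (and the analogous one for $K_u$), this upgrades to Hilbert--Schmidt convergence $K_{v^{n_k}}^2 \to K_{v_{\mathrm{bad}}}^2$. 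Since $\spec K_{v^n}^2$ is a fixed finite subset of $[0,+\infty)$ whose nonzero values are uniformly bounded away from $0$, no nonzero eigenvalue can collapse in the limit: the operator-norm continuity of spectral projections at isolated eigenvalues gives $\rg(K_{v_{\mathrm{bad}}}) = \rg(K_{v^n})$. But the announced bounds $d'\leq d-1$ (even $d$) and $d'\leq d-2$ (odd $d$), combined with the formula $\rg(K_u)=\lfloor d/2\rfloor$ on $\mathcal{V}(d)$ (which follows from the rank characterization of $\mathcal{V}(d)$), force $\rg(K_{v_{\mathrm{bad}}}) \leq \rg(K_{v^n}) - 1$, a contradiction.

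For $\mathrm{(i)} \Rightarrow \mathrm{(iii)}$: I would extract a subsequence with $\|v^{n_k}\|_{H^{s_0}} \to \infty$, and a further weak $H^{1/2}_+$ limit $v_{\mathrm{bad}}$. Writing $v^{n_k} = A_{n_k}/B_{n_k}$ with $B_{n_k}(0) = 1$ and the degree constraints of $\mathcal{V}(d)$, and factoring $B_{n_k}(z) = \prod_j (1-p_j^{n_k}z)$ with $|p_j^{n_k}| < 1$, one sees that the coefficients of $B_{n_k}$ are controlled by elementary symmetric functions, yielding also a uniform $L^\infty(\mathbb{T})$-bound. From $A_{n_k} = v^{n_k} B_{n_k}$ and the $L^2$-boundedness of $v^{n_k}$, the coefficients of $A_{n_k}$ are bounded as well. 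Along a further extraction, $A_{n_k} \to A^\infty$ and $B_{n_k} \to B^\infty$ coefficientwise, so $v^{n_k} \to A^\infty/B^\infty$ in $\mathcal{D}'(\mathbb{T})$. If $B^\infty$ had no zero on $\overline{\mathbb{D}}$, the convergence would be uniform on $\partial\mathbb{D}$, making $v^{n_k}$ bounded in every $H^s$, a contradiction. Hence $B^\infty$ vanishes somewhere on $\partial\mathbb{D}$; since $v_{\mathrm{bad}} \in L^2$, $A^\infty$ must cancel each such boundary zero. Reducing common factors yields $v_{\mathrm{bad}} = \widetilde{A}/\widetilde{B}$ with $\widetilde{B}$ free of zeros on $\overline{\mathbb{D}}$ and strictly smaller degrees, placing $v_{\mathrm{bad}} \in \mathcal{V}(d')$ for some $d' < d$.

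The main obstacle is the sharp parity bookkeeping that refines $d'<d$ into the bounds $d' \leq d - 1$ (even $d$) and $d' \leq d - 2$ (odd $d$). Beyond the pole-to-boundary degeneration described above, one must also account for the case where the leading coefficient of $B_{n_k}$ vanishes in the limit (equivalently, a pole of $v^{n_k}$ drifting to $0$ rather than to $\partial\mathbb{D}$). The hypothesis that $\spec K_{v^n}^2$ is constant is what controls this ambiguity: a pole drifting to $0$ would force a nonzero eigenvalue of $K_u^2$ to collapse to $0$, and is thus excluded. The remaining boundary degenerations can then be counted precisely, producing the announced parity-refined bounds.
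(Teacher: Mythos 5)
Your cyclic chain $\mathrm{(ii)}\Rightarrow\mathrm{(i)}\Rightarrow\mathrm{(iii)}\Rightarrow\mathrm{(ii)}$ is precisely the contrapositive of the chain the paper proves for the boundedness reformulation (Proposition~\ref{crit-I}), and the content of the two nontrivial implications is the same: pole degeneration to $\partial\mathbb{D}$ plus $L^2$-cancellation in one direction, compactness and preservation of $\rg K_u$ under strong $H^{1/2}$ limits in the other. Your handling of $\mathrm{(iii)}\Rightarrow\mathrm{(ii)}$ via Hilbert--Schmidt convergence of $K_{v^{n_k}}$ and operator-norm stability of the spectral projections is clean, correct, and essentially equivalent to the paper's use of the min--max formula together with Lemma~\ref{degradation}.

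The one genuine misstep is in the final paragraph, in the bookkeeping for $\mathrm{(i)}\Rightarrow\mathrm{(iii)}$. The proposed fix --- invoking the constancy of $\spec K_{v^n}^2$ to rule out a pole $p_j^{n_k}\to 0$ --- is both incorrect and unnecessary. It is incorrect because under the \emph{weak} $H^{1/2}$ convergence available here, nonzero eigenvalues of $K_{v^{n_k}}^2$ may simply drop out of $\spec K_{v_{\mathrm{bad}}}^2$ without any metric collapse to $0$: Lemma~\ref{degradation-fort} only gives the inclusion $\spec K_{v_{\mathrm{bad}}}^2\setminus\{0\}\subseteq\spec K_{v^n}^2\setminus\{0\}$, not equality. (It is only under \emph{strong} $H^{1/2}$ convergence --- the case you treated in $\mathrm{(iii)}\Rightarrow\mathrm{(ii)}$ --- that the full spectrum, and hence the rank, is preserved.) It is unnecessary because the boundary cancellation alone already gives $d'\leq d-2$ in \emph{both} parities, so no further exclusion is needed. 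Indeed, once unboundedness forces a zero of $B^\infty$ onto $\partial\mathbb{D}$, the membership $v_{\mathrm{bad}}\in L^2_+$ forces $A^\infty$ to vanish there to at least the same order, so reducing the common factor drops $\deg A$ and $\deg B$ by at least one each. Plugging into the $\mathcal{V}(d)$ degree constraints: for $d=2N$ one gets $\deg\widetilde A\leq N-2$ and $\deg\widetilde B\leq N-1$, hence $d'\leq 2(N-1)=d-2$; for $d=2N+1$ one gets $\deg\widetilde A\leq N-1$ and $\deg\widetilde B\leq N-1$, hence $d'\leq 2(N-1)+1=d-2$. So $d'\leq d-2$ regardless, which more than covers the stated bounds, and the fate of the remaining $p_j^{n_k}$'s (whether they drift inward, stay put, or also reach the boundary) can only make $d'$ smaller.
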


This proposition implies a necessary condition on initial data for some growth of Sobolev norm to occur for solutions of the quadratic Szeg\H{o} equation \eqref{quad}.

\begin{cor}\label{annulation}
Assume that $u_0\in \mathcal{V}(d)$ for some $d\in \mathbb{N}$, and assume that there exists $s_0>\frac{1}{2}$ such that the corresponding solution $u(t)$ of \eqref{quad} is unbounded in $H^{s_0}_+$. Then $u(t)$ is unbounded in every $H^s$, $s>\frac{1}{2}$. Furthermore, for some $k\geq 1$ such that $\sigma_k^2$ is the $k$-th non-zero eigenvalue of $K_{u_0}^2$, we must have
\[ \ell_k(u_0)=0.\]
\end{cor}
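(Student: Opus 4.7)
The plan is to deduce both assertions from Proposition~\ref{CNS}, applied along a sequence of times at which $\|u(t)\|_{H^{s_0}}$ blows up.

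Let $u(t)$ denote the smooth solution of~\eqref{quad} issued from $u_0$. Invariance of $\mathcal{V}(d)$ gives $u(t)\in\mathcal{V}(d)$ for every $t$; conservation of $Q+M$ keeps $\|u(t)\|_{H^{1/2}_+}$ bounded; and Corollary~\ref{coro-lax} makes $\spec K^2_{u(t)}$ independent of $t$. Picking $t_n$ with $\|u(t_n)\|_{H^{s_0}}\to +\infty$ and setting $v^n:=u(t_n)$, the hypotheses of Proposition~\ref{CNS} are met; since~(i) holds, so do~(ii) and~(iii). Condition~(ii) is exactly the first assertion. From~(iii) one extracts a subsequence $v^{n_j}\rightharpoonup v_{\mathrm{bad}}$ weakly in $H^{1/2}_+$ with $v_{\mathrm{bad}}\in\mathcal{V}(d')$ for some $d'<d$.

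To obtain the vanishing of some $\ell_k(u_0)$, the plan is to pass to the weak limit in the generating identity announced in the introduction,
\[
\sum_{k=1}^N \frac{\ell_k(u)}{1-x\sigma_k^2(u)} = \frac{x^2\! \mathscr{J}^{(4)}(x)^2-x|\Ji(x)|^2-Q(u)^2}{\J(x)}.
\]
By Theorem~\ref{invol-thm} and Corollary~\ref{coro-lax}, each $\ell_k$ and each $\sigma_k^2$ is conserved, so the left-hand side evaluated at $v^{n_j}$ equals its value at $u_0$ for every $j$. The right-hand side is rational in $x$ with coefficients polynomial in the moments $J_p=(H_u^p(1)\vert 1)$, which are multilinear in the Fourier coefficients of $u$ and $\bar u$; by Rellich, $v^{n_j}\to v_{\mathrm{bad}}$ strongly in every $L^p$, $p<\infty$, so each $J_p(v^{n_j})$ converges to $J_p(v_{\mathrm{bad}})$. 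Invoking the identity again at $v_{\mathrm{bad}}$ yields
\[
\sum_{k=1}^N \frac{\ell_k(u_0)}{1-x\sigma_k^2(u_0)} \;=\; \sum_{k=1}^{N'} \frac{\ell_k(v_{\mathrm{bad}})}{1-x\sigma_k^2(v_{\mathrm{bad}})}
\]
as an identity of rational functions of $x$. The left-hand side has a simple pole at each $1/\sigma_k^2(u_0)$ for which $\ell_k(u_0)\neq 0$, while the right-hand side only has poles at the (at most $N'$) distinct nonzero eigenvalues of $K_{v_{\mathrm{bad}}}^2$. Since $d'<d$ together with the parity constraint from Proposition~\ref{CNS} forces $\rg K_{v_{\mathrm{bad}}} < \rg K_{u_0}$, at least one distinct nonzero eigenvalue of $K_{u_0}^2$ is absent from $\spec K_{v_{\mathrm{bad}}}^2$, and matching poles and residues forces $\ell_k(u_0)=0$ for the corresponding $k$.

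The main obstacle will be rigorously justifying the passage to the weak limit in the generating identity: besides the $L^p$-continuity of each $J_p$, one must handle the zeros of $\J(x)$ (corresponding to the non-conserved $1/\rho_k^2$), and distinguish an actual loss of a distinct eigenvalue from a mere drop of multiplicity. Restricting the comparison to generic real $x$---where $\J$ does not vanish along the subsequence nor at the limit, and where the $1/\sigma_k^2$ do not coincide with the $1/\rho_\ell^2$---bypasses the denominator issue, while the interlacement~\eqref{interlacement} between $\rho_k^2$ and $\sigma_k^2$ together with the finite-rank structure on $\mathcal{V}(d)$ should pin down a genuinely disappearing eigenvalue.
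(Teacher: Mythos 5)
Your proposal takes a genuinely different route from the paper, and it contains a real gap at the step you yourself flag at the end.

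The paper's proof of this corollary is short and does not use the generating identity at all. It relies on Proposition~\ref{crit-II}: if the trajectory in $\mathcal{V}(d)$ is unbounded in some $H^{s_0}$, then there exist $\sigma_k\in\Xi_u^K$ and times $t_n\to\pm\infty$ along which \emph{both} $u_k^K(t_n)\to 0$ and $w_k^K(t_n)\to 0$ in $L^2_+$. Since $\ell_k=(2Q+\sigma_k^2)\|u_k^K\|^2-\|w_k^K\|^2$ is conserved (Proposition~\ref{conservation-preuve}) and tends to $0$ along $(t_n)$, it is identically zero. This is proved entirely within Section~\ref{trois}, before the generating series are even introduced. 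Your route instead invokes Proposition~\ref{generating-prop} from Section~\ref{cinq} and attempts to pass to the weak limit in that identity; this is a much heavier mechanism, and it is also a forward reference within the paper's logical structure.

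The gap is precisely the ``multiplicity drop versus eigenvalue loss'' dichotomy. Your pole-matching argument only yields $\ell_k(u_0)=0$ if some distinct nonzero eigenvalue $\sigma_k^2$ of $K_{u_0}^2$ is \emph{absent} from $\spec K_{v_{\mathrm{bad}}}^2$. Lemma~\ref{degradation-fort} gives $\spec K_{v_{\mathrm{bad}}}^2\subseteq\spec K_{u_0}^2$ and $\rg K_{v_{\mathrm{bad}}}^2<\rg K_{u_0}^2$, but a priori this rank drop could come entirely from multiplicities decreasing while every distinct nonzero eigenvalue survives; in that case all the poles of the two generating series coincide and the residue comparison only gives $\ell_k(u_0)=\ell_k(v_{\mathrm{bad}})$, which proves nothing. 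The interlacement~\eqref{interlacement} you invoke does not rule this out: it constrains where the $\rho_j^2$ can sit relative to the $\sigma_k^2$, not the multiplicities of a weak limit. The fact that a surviving eigenvalue survives with its \emph{full} multiplicity is genuinely nontrivial. In the paper it is obtained by passing to the limit in the finite Blaschke products of Proposition~\ref{blaschke-dimension}: by Lemma~\ref{orbite-bl} the Blaschke product $\Psi_k^K(t_n)$ moves only by a phase, so its degree $m_k-1$ is frozen, and in the limit one gets $K_{v_{\mathrm{bad}}}((v_{\mathrm{bad}})_k^K)=\sigma_k e^{i\psi^\infty}\Psi_k^K(0)\,(v_{\mathrm{bad}})_k^K$ whenever $(v_{\mathrm{bad}})_k^K\neq 0$, forcing $\dim F_{v_{\mathrm{bad}}}(\sigma_k)=m_k$ (see the proof of Proposition~\ref{crit-II} and the remark that follows it). Without some version of this Blaschke-product argument, the case analysis in your last paragraph does not close, and the proof is incomplete. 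If you do import that argument, you are essentially reproving Proposition~\ref{crit-II}, at which point the paper's two-line deduction is available and the generating series become unnecessary.

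A secondary, smaller point: the passage to the limit in the generating identity requires including the $k=\infty$ term $\ell_\infty/(1-x\cdot 0)=\ell_\infty$ when $d$ is odd (since then $u\not\perp\ker K_u^2$), and justifying the limit of the rational function $\mathscr{R}(x)$ whose poles $1/\rho_j^2(v^n)$ move with $n$; your suggestion to work at generic real $x$ is a reasonable workaround, but deserves an actual argument (e.g.\ first fix a non-real $x$, or use that $\J(x)(v^n)\to\J(x)(v_{\mathrm{bad}})$ locally uniformly away from the accumulation set of the $1/\rho_j^2(v^n)$, which stays inside a compact of $(0,\infty)$ by the uniform $H^{1/2}$ bound).
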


\begin{rem}
The proof of Proposition \ref{CNS} relies on a connection between growth of Sobolev norms and loss of compactness, quantified by equipping $H^{1/2}_+$ with the weak topology and studying the cluster points of the strongly bounded sequence $u^n$. This idea can be illustrated by the following basic example. Pick some $\ell^2$ sequence of positive numbers $(a_k)$, and consider the periodic functions defined by
\[ f_n(x):=\sum_{k=0}^{+\infty}a_ke^{i(kn)x},\quad n\in\mathbb{N},\; x\in \mathbb{T}.\]
Then the sequence $f_n$ is uniformly bounded in $L^2$, but the $L^2$-energy of $f_n$ obviously moves toward high frequencies (or equivalently, the $H^s$ norm of $f_n$, $s>0$, is morally going to grow like $n^s$). This phenomenon can be described saying that the only weak cluster point of the sequence $f_n$ in $L^2$ is $a_0$, and $|a_0|^2<\|f_n\|_{L^2}^2$. This energy loss through high-frequency energy transfer is precisely what is captured by Proposition \ref{CNS}.
\end{rem}

\vspace*{0,5em}
This result enables to find the right counterpart of condition \eqref{resonance-V3} for solutions in $\mathcal{V}(4)$ :
\begin{thm}\label{turbu-V4}
A solution $t\mapsto u(t)$ of \eqref{quad} in $\mathcal{V}(4)$ is unbounded in some $H^s$, $s>\frac{1}{2}$, if and only if $K_{u(t)}^2$ has two distinct eigenvalues of multiplicity $1$, $\sigma_1^2>\sigma_2^2$, and if $\ell_1(u)=0$ or equivalently
\begin{equation}\label{resonance-V4}
|J|^2=Q^2(Q+\sigma^2_2).
\end{equation}
In that case, for all $s>1/2$, there exists constants $C_s, C'_s>0$ such that we have
\[\frac 1 {C_s}e^{C'_s|t|}\leq\|u(t)\|_{H^s}\leq C_se^{C'_s|t|},\quad \text{as } t\to \pm\infty.\]
\end{thm}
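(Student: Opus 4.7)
The plan is to combine the obstruction coming from Corollary \ref{annulation} with a case analysis of the spectral structure on $\mathcal{V}(4)$, and then to read off the exponential rate from an explicit reduction of the flow. Since $u\in\mathcal{V}(4)$ forces $\mathrm{rk}(K_u)=2$, $K_u^2$ has exactly two non-zero eigenvalues counted with multiplicity, producing two flow-invariant strata (by Corollary \ref{coro-lax}): case (A), one double eigenvalue $\sigma_1^2>0$, or case (B), two simple eigenvalues $\sigma_1^2>\sigma_2^2>0$. Case (A) is ruled out at once: the identity $Q^2=\sum_k \ell_k$ collapses to $\ell_1=Q^2>0$, which contradicts Corollary \ref{annulation}. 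So any turbulent trajectory lies in case (B), and Corollary \ref{annulation} yields $\ell_1(u_0)=0$ or $\ell_2(u_0)=0$.

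To pin down that only $\ell_1(u_0)=0$ can occur, I would apply Proposition \ref{CNS} and extract $u(t_n) \rightharpoonup v_{\mathrm{bad}}$ in $H^{1/2}_+$ with $v_{\mathrm{bad}}\in\mathcal{V}(d')$, $d'\leq 3$, and hence $\mathrm{rk}(K_{v_{\mathrm{bad}}})\leq 1$. The compact embedding $H^{1/2}(\mathbb{T})\hookrightarrow L^p$, $p<\infty$, gives strong convergence of the symbols in every $L^p$, hence Hilbert--Schmidt convergence of $K_{u(t_n)}^2$ and $H_{u(t_n)}^2$ to $K_{v_{\mathrm{bad}}}^2$ and $H_{v_{\mathrm{bad}}}^2$, so that both their $\sigma_j^2$- and $\rho_j^2$-spectra converge with multiplicity. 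Since $\mathrm{rk}(K_{v_{\mathrm{bad}}})\leq 1$, exactly one of $\sigma_1^2(u_0),\sigma_2^2(u_0)$ survives as a non-zero eigenvalue of $K_{v_{\mathrm{bad}}}^2$; the strong vanishing of the spectral projection onto the lost eigenspace, combined with the conservation of $\ell_k$, forces $\ell_k(u_0)=0$ for the corresponding $k$. The decisive step is to show that the surviving eigenvalue must be $\sigma_2^2(u_0)$: I expect this to follow by combining the weak continuity of $|J|^2$ on bounded $H^{1/2}$ subsets (which yields $|J(u_0)|^2=|J(v_{\mathrm{bad}})|^2$ and hence the alternative resonance conditions $|J|^2=Q^2(Q+\tau^2)$ with $\tau^2\in\{\sigma_1^2,\sigma_2^2\}$) with an interlacement-based analysis of $v_{\mathrm{bad}}\in\mathcal{V}(2)\cup\mathcal{V}(3)$ that rules out $\tau^2=\sigma_1^2(u_0)$ through the constraints it would impose on $\rho_1^2(v_{\mathrm{bad}}),\rho_2^2(v_{\mathrm{bad}})$.

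The equivalence $\ell_1=0 \Longleftrightarrow |J|^2=Q^2(Q+\sigma_2^2)$ is then purely algebraic: from the two specializations $\ell_1+\ell_2 = Q^2$ and $(Q+\sigma_1^2)\ell_1 + (Q+\sigma_2^2)\ell_2 = |J|^2$ of the series formulas recalled before Theorem \ref{invol-thm}, imposing $\ell_1=0$ gives $\ell_2=Q^2$ and $|J|^2=Q^2(Q+\sigma_2^2)$; conversely, eliminating $\ell_2$ yields $(\sigma_1^2-\sigma_2^2)\ell_1=0$, so $\ell_1=0$.

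For the converse implication and the explicit exponential bounds, I would parametrise $u\in\mathcal{V}(4)$ as $A/B$ with $\deg A\leq 1$, $\deg B=2$, $B(0)=1$, and factor $B(t,z)=(1-p_1(t)z)(1-p_2(t)z)$ with $|p_j(t)|<1$. The conservation laws $Q,M,|J|^2,\sigma_1^2,\sigma_2^2$ cut down the $8$-dimensional real phase space by five constraints, and on the resonance variety $\{\ell_1=0\}$ I expect the reduced ODE to be hyperbolic, forcing some $|p_j(t)|$ to approach $1$ at rate $1-|p_j(t)|^2 \asymp e^{-\alpha|t|}$ for some $\alpha>0$; combined with $\|A/B\|_{H^s}^2 \asymp (1-|p_j|^2)^{-(2s+1)}$ near the boundary, this delivers the claimed two-sided bound $\frac{1}{C_s}e^{C'_s|t|}\leq\|u(t)\|_{H^s}\leq C_s e^{C'_s|t|}$, the upper estimate also being a consequence of the general a priori result of \cite{thirouin2}. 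The principal obstacle is the identification, in the necessity direction, of the lost eigenvalue as $\sigma_1^2(u_0)$ rather than $\sigma_2^2(u_0)$, which concentrates most of the spectral bookkeeping of the proof.
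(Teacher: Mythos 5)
Your overall framing is sound — invoke Corollary \ref{annulation} to reduce to $\ell_1=0$ or $\ell_2=0$, rule out the one-eigenvalue case by $\sum\ell_k=Q^2>0$, derive the algebraic equivalence $\ell_1=0\Leftrightarrow |J|^2=Q^2(Q+\sigma_2^2)$, and then produce the exponential rate. The algebraic part is correct and matches Corollary \ref{expression-energie}. But the two load-bearing steps are only sketched, and in both cases the paper goes a genuinely different and more explicit route.

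\emph{Ruling out $\ell_2=0$.} You propose to argue via the weak limit $v_{\mathrm{bad}}\in\mathcal{V}(d')$, $d'\le 2$, and then identify which $\sigma_k^2$ is ``lost'' using $|J|^2$ and interlacement. Two problems. First, you invoke convergence with multiplicity of the $\rho_j^2$-spectrum of $H^2_{u(t_n)}$, but the $\rho_j^2$'s are \emph{not} conservation laws of \eqref{quad}; they depend on $t$, so this is only a subsequential convergence with no a priori identification. Second, even granting it, the interlacement $\rho_1^2\ge\sigma_1^2\ge\rho_2^2\ge\sigma_2^2$ and the identity $|J(v_{\mathrm{bad}})|^2=Q^2(Q+\tau^2)$ do not by themselves exclude $\tau^2=\sigma_1^2$: as you say yourself, this is the ``principal obstacle,'' and it is left open. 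The paper resolves this not by spectral bookkeeping of the limit but by an explicit scalar ODE for $x:=\|u_k^K\|_{L^2}^2$ (Lemma \ref{u1-to-0}). Setting $x=\|u_1^K\|^2$, combining $\dot x = 2x\,\im(J\xi_1)$ with $\bar J=\xi_1\|u_1^K\|^2+\xi_2\|u_2^K\|^2$ and the constraints $\ell_1=0$, $\ell_2=Q^2$ yields $(\dot x/x)^2 = Q^2P\big((\sigma_1^2-\sigma_2^2)x/Q\big)$ for an explicit quadratic $P$ with $P(0)>0$; solving gives exponential decay of $x$. Running the same computation when $\ell_2=0$ produces $\tilde P$ with $\tilde P(0)<0$, forcing $\|u_2^K\|^2$ to stay bounded away from $0$, which (via Proposition \ref{crit-II}) excludes blow-up. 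This settles the necessity direction cleanly and self-containedly; your approach would still need to prove the identification of the lost eigenvalue, which is not obvious.

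\emph{The exponential rate.} ``I expect the reduced ODE to be hyperbolic'' is not an argument. The paper obtains the rate by solving the explicit ODE $(\dot y)^2=A^2y^2(y+a)(b-y)$ in closed form, giving $y(t)\sim e^{-\tau|t|}$ with $\tau=Q\sqrt{4(Q+\sigma_2^2)(\sigma_1^2-\sigma_2^2)-Q^2}$. Translating the decay of $\|u_1^K\|^2$ into pole motion requires the inverse spectral formula (Theorem \ref{formule-inverse}) and the finer analysis of Lemma \ref{mouvement-pole}, which handles three sub-cases depending on the limits of $\rho_1^2,\rho_2^2$; in the delicate third case ($\rho_1^2,\rho_2^2\to\sigma_1^2$ simultaneously), one must additionally use $w_1^K\to 0$ to control the angles $\varphi_1-\varphi_2$, precisely the input that distinguishes turbulence from a mere crossing. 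Your outline does not address this. The $H^s$-asymptotics $\|u\|_{H^s}^2\asymp(1-|p|^2)^{-(2s-1)}$ at the end are correct (note the exponent is $2s-1$, not $2s+1$ as you wrote: the $H^{1/2}$-boundedness of the coefficient $\alpha$ cancels two powers of $(1-|p|^2)$).

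In short: your skeleton is right, but both the exclusion of $\ell_2=0$ and the derivation of the exponential rate rest on the explicit ODE for $\|u_k^K\|^2$, which is the real work of the paper's proof and which your sketch replaces with incomplete spectral and heuristic arguments.
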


\begin{eg}
A concrete example of a function of $\mathcal{V}(4)$ satisfying \eqref{resonance-V4} is given by
\[ v(z):=\frac{z}{(1-pz)^2},\quad \forall z\in\mathbb{D},\]
whenever $|p|^2=3\sqrt{2}-4\simeq 0,2426$\dots
\end{eg}

\begin{rem}
The interest of Theorem \ref{turbu-V4} is to display the case of an interaction between two solitons. Traveling waves for equation \eqref{quad} are classified in \cite{thirouin-prog}, and this turbulent solution $u$ appears to be the exact sum of two solitons. Indeed, a turbulent solution such as the one described above will be after some time $T$ of the form
\[ u(t,z)=\frac{\alpha}{1-pz}+\frac{\beta}{1-qz},\]
with $\alpha,\beta,p,q\in\mathbb{C}$, with $|p|,|q|<1$ and $p\neq q$. One of the two poles approaches the unit circle $\partial\mathbb{D}$ exponentially fast, while the other remains inside a disc of radius $r<1$.
\end{rem}

\paragraph{Open questions.} The picture we draw here remains far from being complete. First of all, now that we have $d$ conservation laws in involution on $\mathcal{V}(d)$, we would like to apply the Arnold-Liouville theorem. For that purpose, we should give a description of the level sets of the $\ell_j$'s and the $\sigma_k^2$'s on $\mathcal{V}(d)$, and find which ones are compact in $H^{1/2}_+$. Obviously, some are not, since we found solutions in $\mathcal{V}(3)$ and $\mathcal{V}(4)$ that leave every compact of $H^{1/2}$.

Then, to solve explicitely the quadratic Szeg\H{o} equation \eqref{quad} on $\mathcal{V}(d)$, we should find angle coordinates in $\mathbb{T}^d$ (for compact level sets) or in $\mathbb{T}^{d'}\times\mathbb{R}^{d-d'}$ (in the general case), for some $d'<d$. Angle coordinates for the cubic Szeg\H{o} equation are found in \cite{GGtori} for the torus, and in \cite{PocoGrowth} for the real line. For the case of action-angle coordinates for other integrable PDEs, one can refer to \cite{KappG, KappP}. Noteworthy is that the angle associated to $\sigma_k^2$ in the case of the cubic Szeg\H{o} coordinates does not evolve linearly in time through the flow of the quadratic equation \eqref{quad} (see Lemma \ref{orbite-bl-local} below, where we compute its evolution).

The exact situation on $\mathcal{V}(4)$ is not completely understood either. Whereas on $\mathcal{V}(3)$, only $\ell_1$ can cancel out, corresponding to \eqref{resonance-V3}, and $\ell_\infty (u)>0$ for all $u\in\mathcal{V}(3)$, it is not certain whether $\ell_2$ can be zero on $\mathcal{V}(4)$. In any case, Theorem \ref{turbu-V4} is enough to say that solutions of \eqref{quad} on $\mathcal{V}(4)$ such that $\ell_2=0$, if any, are bounded in every $H^s$ topology.

A broadly open question naturally concerns the case of the $\mathcal{V}(d)$'s for $d\geq 5$. By the substitution principle that is stated in \cite[Proposition 3.5]{thirouin-prog}, replacing $z$ by $z^N$, $N\geq 2$, in turbulent solutions of $\mathcal{V}(3)$ and $\mathcal{V}(4)$ will allow us to give examples of exponentially growing solutions on each of the $\mathcal{V}(d)$'s, $d\geq 5$. However, can we completely classify such growing solutions ? Is it possible to find other types or rates of growth, such as a polynomial one, or an intermittent one (\emph{i.e.} a solution satisfying both $\limsup \|u(t)\|_{H^s}=\infty$ and $\liminf \|u(t)\|_{H^s}<\infty$) ?

Going from rational solutions to general data in $H^{1/2}_+$ is our long-term objective. We would like to understand, as in \cite{livrePG} for the cubic Szeg\H{o} equation or in \cite{hani} for the resonant NLS, which is the generic behaviour of solutions of \eqref{quad} on that space. To this end, it seems unlikely that we can get around the construction of action-angle variables.

\paragraph{Plan of the paper.} After some preliminaries in Section \ref{deux} about the spectral theory of $H_u$ and $K_u$, we will see in Section \ref{trois} how to prove simply that the $\ell_j$'s are conserved along the evolution of \eqref{quad}, and we prove that the cancellation of at least one $\ell_j$ is a necessary condition for growth of Sobolev norms to occur. In Section \ref{quatre}, we analyse the case of $\mathcal{V}(4)$. Section \ref{cinq} is finally devoted to the proof of the commutation of the $\ell_j$'s.

\paragraph{Acknowledgements.} The author would like to express his gratitude to Pr. Patrick Gérard, who provided him much insightful advice during this work.

\vspace*{1em}
\section{Preliminaries : spectral theory of \texorpdfstring{$H_u$ and $K_u$}{Hu and Ku}}\label{deux}

For the sake of completeness, we recall in this section some of the results of \cite{livrePG}, where the spectral theory of compact Hankel operators is studied in great detail.

We begin with a definition :
\begin{déf}[Finite Blaschke products]
A function $\Psi\in L^2_+$ is called a Blaschke product of degree $m\geq 0$ if there exists $\psi\in\mathbb{T}$ as well as $m$ complex numbers $a_j\in\mathbb{D}$, $j\in\llbracket 1,m\rrbracket$, such that
\[ \Psi(z) =e^{i\psi}\prod_{j=1}^m\frac{z-\overline{a_j}}{1-a_jz},\quad \forall z\in\mathbb{D}.\]
$\psi$ is called the \emph{angle} of $\Psi$, and $D(z)=\prod_{j=1}^m (1-a_jz)$ is called the \emph{normalized denominator} of $\Psi$ (\emph{i.e.} with $D(0)=1$).
\end{déf}
Observe that a Blaschke product of degree $m$ belongs to $\mathcal{V}(2m+1)$, but more importantly, if $\Psi$ is a Blaschke product, then $|\Psi(e^{ix})|^2=1$ for all $x\in \mathbb{T}$. In particular, $\Psi\in L^\infty_+$.

\paragraph{Singular values.} Now, fix $u\in H^{1/2}_+$. For $s\geq 0$, we introduce two subspaces of $L^2_+$ defined by
\begin{gather*} E_u(s):=\ker (H_u^2-s^2I), \\
F_u(s):=\ker(K_u^2-s^2I).
\end{gather*}
We denote by $\Xi_u^H$ (resp. $\Xi_u^K$) the set of $s>0$ such that $E_u(s)$ (resp. $F_u(s)$) is not $\{0\}$. It is the set of the square-roots of the non-zero eigenvalues of $H_u^2$ (resp. $K_u^2$). We call them the \emph{singular values} associated to $u$. The link between $\Xi_u^H$ and $\Xi_u^K$ can be described more precisely :

\begin{prop}[{\cite[Lemma 3.1.1]{livrePG}}]\label{dominance-def}
Let $s\in \Xi_u^H\cup \Xi_u^K$. Then one of the following holds :
\begin{enumerate}
\item $\dim E_u(s) =\dim F_u(s)+1$, $u\not\perp E_u(s)$, and $F_u(s)=E_u(s)\cap u^\perp$ ;
\item $\dim F_u(s)=\dim E_u(s)+1$, $u\not\perp F_u(s)$, and $E_u(s)=F_u(s)\cap u^\perp$.
\end{enumerate}
\end{prop}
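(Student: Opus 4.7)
My plan is to combine the rank-one perturbation identity $K_u^2 = H_u^2 - (\cdot|u) u$ of \eqref{huku} with the commutation $K_u = S^\ast H_u$, via a four-way case analysis on the orthogonality of $u$ to $E_u(s)$ and $F_u(s)$.

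I first isolate the common core $G(s) := E_u(s) \cap u^\perp = F_u(s) \cap u^\perp$: the rank-one identity shows that $H_u^2$ and $K_u^2$ agree on $u^\perp$, so the two intersections coincide; conversely any $h \in E_u(s) \cap F_u(s)$ satisfies $(h|u)\, u = (H_u^2 - K_u^2)h = 0$, placing it in $u^\perp$. Since $E_u(s)$ (resp.\ $F_u(s)$) differs from $G(s)$ by at most one dimension (in the direction of a representative non-orthogonal to $u$), each of $\dim E_u(s), \dim F_u(s)$ lies in $\{\dim G(s), \dim G(s)+1\}$. The configurations ``$u \perp E_u(s)$ and $u \not\perp F_u(s)$'' and ``$u \not\perp E_u(s)$ and $u \perp F_u(s)$'' yield precisely alternatives (ii) and (i) of the proposition, so everything reduces to ruling out the two symmetric configurations in which $\dim E_u(s) = \dim F_u(s)$.

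To exclude $u \not\perp E_u(s)$ and $u \not\perp F_u(s)$ simultaneously, I pick $f \in F_u(s)$ with $(f|u) \neq 0$; the equation $K_u^2 f = s^2 f$ then rewrites through \eqref{huku} as $(H_u^2 - s^2) f = (f|u) u$, exhibiting $u$ as an element of $\Ran(H_u^2 - s^2)$. Since $H_u^2$ is compact and self-adjoint, $H_u^2 - s^2$ is Fredholm of index $0$ with closed range equal to $\ker(H_u^2 - s^2)^\perp = E_u(s)^\perp$, contradicting $u \not\perp E_u(s)$.

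The delicate symmetric case is $u \perp E_u(s) = F_u(s) = G(s) \neq \{0\}$, which I rule out by exploiting the shift structure. For $h \in G(s)$, the relation $H_u^2|_{G(s)} = s^2 I$ combined with the antilinearity of $H_u$ forces $H_u(G(s)) \subseteq E_u(s) = G(s)$, and similarly $K_u(G(s)) \subseteq G(s)$, with both restrictions bijective since $H_u^2 = K_u^2 = s^2 I$ there. Moreover $(H_u h | 1) = (u | h) = 0$, so $H_u h$ has vanishing constant coefficient; writing $K_u = S^\ast H_u$ and using $SS^\ast h = h - (h|1)\cdot 1$ one obtains $S K_u h = H_u h \in G(s)$. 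By bijectivity of $K_u$ on $G(s)$, this gives $S(G(s)) \subseteq G(s)$. But any finite-dimensional $S$-invariant subspace of $L^2_+$ must contain a non-zero eigenvector of $S$, whereas $z\, h(z) = \lambda h(z)$ forces $h \equiv 0$ in $L^2_+$. Hence $G(s) = \{0\}$, a contradiction. I expect this final step to be the main obstacle, since abstract rank-one perturbation theory only yields the dichotomy up to the equal-dimension ambiguity, and it is genuinely the Hankel-Toeplitz commutation $K_u = S^\ast H_u$ that forces distinct eigenspaces.
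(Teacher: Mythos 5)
Your proof is correct. The paper does not reproduce a proof of this proposition — it is cited from \cite[Lemma 3.1.1]{livrePG} — and your argument (the rank-one identity \eqref{huku} to identify the common core $G(s)=E_u(s)\cap u^\perp=F_u(s)\cap u^\perp$, Fredholm/self-adjointness to exclude the case where $u$ is non-orthogonal to both eigenspaces, and the absence of eigenvectors for the shift $S$ on $L^2_+$ to exclude the case where $u$ is orthogonal to both) is precisely the standard route taken in that reference.
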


In the first case, we say that $s$ is $H$-dominant, and we write $s\in \Sigma_u^H$. 

In the second case, we say that $s$ is $K$-dominant, and we write $s\in \Sigma_u^K$.

It also appears that, writing $\Xi_u^H\cup \Xi_u^K$ as a decreasing sequence (with no repetition), $H$-dominant singular values are given by the odd terms, and $K$-dominant by the even ones.

\paragraph{Projections.} Let $\{\rho_j\}_{j\geq 1}$ (resp. $\{\sigma_k\}_{k\geq 1}$) be the decreasing list of the elements of $\Xi_u^H$ (resp. $\Xi_u^K$). We define
\[\begin{array}{c|l}
u_j^H & \text{the projection of }u\text{ onto }E_u(\rho_j) \\ 
u_k^K & \text{the projection of }u\text{ onto }F_u(\sigma_k) \\
w_k^K & \text{the projection of }\Pi(|u|^2)\text{ onto }F_u(\sigma_k)
\end{array}\]
The notation $u_j^H$ should be read as ``the projection of $u$ onto the $j$-th eigenspace of $H_u^2$''.

By Proposition \ref{dominance-def}, $u_j^H\neq 0$ if and only if $\rho_j\in \Sigma_u^H$, and $u_k^K\neq 0$ if and only if $\sigma_k\in \Sigma_u^K$. In particular,
\begin{equation}\label{decomp-u}
u=\sum_{k\geq 1}u_k^K +u_\infty^K=\sum_{\substack{k\geq 1\\ \sigma_k\in \Sigma_u^K}}u_k^K+u_\infty^K,
\end{equation}
where $u_\infty^K$ stands for the projection of $u$ onto $\ker K_u^2$. The same formula holds for $u_j^H$, but the extra term is no more needed, since $u\perp \ker H_u^2$.

These decompositions of $u$ appears to be very useful, for we can describe how $H_u$ and $K_u$ act on $E_u(s)$ and $F_u(s)$, $s>0$. This is what is summed up in the next proposition :

\begin{prop}[{\cite[Proposition 3.5.1]{livrePG}}]\label{blaschke-dimension}
\begin{itemize}
\item If $s\in \Sigma_u^H$, write $s=\rho_j$ for some $j\geq 1$. Let $m=\dim E_u(\rho_j)=\dim F_u(\rho_j)+1$. Then there exists $\Psi_j^H$, a Blaschke product of degree $m-1$, such that
\[ \rho_ju_j^H=\Psi_j^HH_u\left( u_j^H\right).\]
In addition, if $D$ is the normalized denominator of $\Psi_j^H$, then
\begin{align*}
E_u(\rho_j)&=\left\lbrace \frac{f}{D}H_u\left( u_j^H\right) \;\middle|\; f\in \mathbb{C}_{m-1}[z]\right\rbrace ,\\
F_u(\rho_j)&= \left\lbrace \frac{g}{D}H_u\left( u_j^H\right) \;\middle|\; g\in \mathbb{C}_{m-2}[z]\right\rbrace ,
\end{align*}
and $H_u$ (resp. $K_u$) acts on $E_u(\rho_j)$ (resp. $F_u(\rho_j)$) by reversing the order of the coefficients of the polynomial $f$ (resp. $g$), conjugating them, and multiplying the result by $\rho_je^{i\psi_j}$, where $\psi_j$ is the angle of $\Psi_j^H$.

\item If $s\in \Sigma_u^K$, write $s=\sigma_k$ for some $k\geq 1$. Let $m'=\dim F_u(\sigma_k)=\dim E_u(\sigma_k)+1$. Then there exists $\Psi_k^K$, a Blaschke product of degree $m'-1$, such that
\begin{equation*}
K_u\left( u_k^K\right)=\sigma_k\Psi_k^Ku_k^K.
\end{equation*}
In addition, if $D$ is the normalized denominator of $\Psi_k^K$, then
\begin{align*}
F_u(\sigma_k)&=\left\lbrace \frac{f}{D}u_k^K \;\middle|\; f\in \mathbb{C}_{m'-1}[z]\right\rbrace ,\\
E_u(\sigma_k)&= \left\lbrace \frac{zg}{D}u_k^K \;\middle|\; g\in \mathbb{C}_{m'-2}[z]\right\rbrace ,
\end{align*}
and $K_u$ (resp. $H_u$) acts on $F_u(\sigma_k)$ (resp. $E_u(\sigma_k)$) by reversing the order of the coefficients of the polynomial $f$ (resp. $g$), conjugating them, and multiplying the result by $\sigma_k e^{i\psi_k}$, where $\psi_k$ is the angle of $\Psi_k^K$.
\end{itemize}
\end{prop}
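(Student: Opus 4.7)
The plan is to carry out the argument in the H-dominant case in full; the K-dominant case is entirely parallel, using $K_u=H_u S=S^*H_u$ to account for the extra factor of $z$ appearing in the parametrization of $E_u(\sigma_k)$.

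First I would establish that $H_u$ preserves $E_u(\rho_j)$ and that $H_u/\rho_j$ acts on it as an antilinear isometric involution. Stability follows from the antilinearity of $H_u$ combined with the commutation $H_uH_u^2=H_u^2H_u$; isometry from the symmetry $(H_uf\mid g)=(H_ug\mid f)$, which gives $\|H_uf\|^2=(f\mid H_u^2f)=\rho_j^2\|f\|^2$; the involution property since $H_u^2=\rho_j^2 I$ on $E_u(\rho_j)$. This antilinear involutive structure on a finite-dimensional subspace is what ultimately forces the Blaschke factorization.

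Next, set $e:=u_j^H$ (nonzero by H-dominance) and $v:=H_u(e)\in E_u(\rho_j)$. The core of the proof is to show that the ratio $\Psi_j^H:=\rho_j e/v$, a priori meromorphic on $\mathbb{D}$, extends to a finite Blaschke product of degree $m-1$. Rationality is automatic since $e,v$ lie in the finite-dimensional space $E_u(\rho_j)$ of rational functions. The inner property $|\Psi_j^H|\equiv 1$ on $\partial\mathbb{D}$ amounts to the pointwise identity $|v|^2=\rho_j^2|e|^2$, which I would prove by manipulating the Schmidt-pair relations $u\bar e=v+h_1$ and $u\bar v=\rho_j^2e+h_2$ (with $h_1,h_2\in L^2\ominus L^2_+$), conjugating and cross-multiplying, and exploiting the involutive structure on $E_u(\rho_j)$ to cancel the negative-frequency contributions. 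Let $D$ be the normalized denominator of $\Psi_j^H$ (a polynomial of degree $m-1$ with no zero in $\overline{\mathbb{D}}$) and $\psi_j$ its angle.

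Third, parametrize $E_u(\rho_j)$. The map $f\in\mathbb{C}_{m-1}[z]\mapsto (f/D)\,v\in L^2_+$ is injective (since $v\not\equiv 0$ and $D$ does not vanish on $\overline{\mathbb{D}}$) with source and image both of dimension $m$, so it suffices to show the image is contained in $E_u(\rho_j)$. Using (a) the identity $H_u(\bar\varphi\,g)=T_\varphi H_u(g)$ for $\varphi\in L^\infty_+$, together with (b) the relation $H_u(v)=H_u^2(e)=\rho_j^2e=\rho_j e^{i\psi_j}(D^*/D)\,v$, where $D^*(z):=z^{m-1}\overline{D(1/\bar z)}$, a direct computation yields
\[
H_u\!\Bigl(\frac{f}{D}\,v\Bigr)=\rho_j\,e^{i\psi_j}\,\frac{f^*}{D}\,v,\qquad f^*(z):=z^{m-1}\overline{f(1/\bar z)}.
\]
Since $f^{**}=f$ for $f\in\mathbb{C}_{m-1}[z]$, applying $H_u$ twice returns $\rho_j^2(f/D)\,v$, confirming inclusion in $E_u(\rho_j)$ and yielding the reverse-and-conjugate action on coefficients. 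The identification $F_u(\rho_j)=E_u(\rho_j)\cap u^\perp$ from Proposition~\ref{dominance-def} then reduces, after expanding the constraint $((f/D)\,v\mid u)=0$ and writing $u=e+(\text{terms orthogonal to }E_u(\rho_j))$, to a single linear condition that forces the top coefficient of $f$ to vanish, i.e.\ $f\in\mathbb{C}_{m-2}[z]$. The action of $K_u=H_uS$ on $F_u(\rho_j)$ is then read off from the $H_u$-action on the larger space $E_u(\rho_j)$: a polynomial $g$ of degree $\leq m-2$ is lifted to $zg$ of degree $\leq m-1$, and $(zg)^*=z^{m-2}\overline{g(1/\bar z)}$ produces the same reverse-and-conjugate pattern in $\mathbb{C}_{m-2}[z]$.

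The main obstacle is the Blaschke factorization in the second step — upgrading the $L^2$ identity $\|v\|=\rho_j\|e\|$ to the pointwise equality $|v|=\rho_j|e|$ on $\partial\mathbb{D}$. Abstract spectral data does not yield this by itself; one must exploit the multiplicative structure specific to the symbol $u$, via Adamyan--Arov--Krein-type manipulations of Schmidt pairs, to extract the inner character of the quotient.
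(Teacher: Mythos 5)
The proposition is quoted verbatim (as Proposition 3.5.1) from the Gérard--Grellier monograph \cite{livrePG}; Section \ref{deux} of the paper explicitly states that it is \emph{recalling} results from that reference, and no proof is given here. There is therefore no proof of the author's to compare against, so I assess your attempt on its own merits.

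Your outline identifies the right objects --- the antilinear involutive isometry $H_u/\rho_j$ on $E_u(\rho_j)$, the quotient $\Psi_j^H=\rho_j e/v$, the polynomial parametrization of the eigenspace --- but you have misdiagnosed where the real work lies, and as a consequence the argument is circular. The AAK-type manipulation you allude to does yield the pointwise modulus identity with little effort: writing $u\bar e=v+\bar z\bar F$ and $u\bar v=\rho_j^2e+\bar z\bar G$ with $F,G\in L^2_+$, multiplying the first by $\bar v$, the second by $\bar e$, and subtracting gives $|v|^2-\rho_j^2|e|^2=\bar z(\bar e\bar G-\bar v\bar F)=z(eG-vF)$; since $eG-vF$ has only nonnegative Fourier modes while $\bar e\bar G-\bar v\bar F$ has only nonpositive ones, both sides vanish. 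So the ``main obstacle'' you single out is in fact the cheap part. What is not cheap is that unimodularity of $e/v$ on $\partial\mathbb D$ does not make $e/v$ inner: a priori it is only a Nevanlinna-class ratio, whose factorization could have poles at zeros of $v$ inside $\mathbb D$ or a singular inner denominator. Indeed, by the symmetry of the Schmidt relations, $v/e$ has the \emph{same} boundary modulus, so if mere unimodularity forced innerness then both $\Psi_j^H$ and $\Psi_j^H{}^{-1}$ would be inner and $\Psi_j^H$ would be constant, contradicting the statement for $m\geq 2$. Upgrading to holomorphy, then to rationality (finite Blaschke product), and finally pinning the degree to exactly $m-1$ is the substance of the Gérard--Grellier proof, and your sketch elides all three.

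The circularity surfaces in your third step. Your computation $H_u\bigl((f/D)v\bigr)=\rho_je^{i\psi_j}(f^*/D)v$ is correct (modulo a conjugation slip: the usable identity is $H_u(\varphi h)=T_{\bar\varphi}H_u(h)$ for $\varphi\in L^\infty_+$, $h\in L^2_+$; one cannot apply $H_u$ to $\bar\varphi h\notin L^2_+$), and together with $f^{**}=f$ it establishes the \emph{inclusion} $\{(f/D)v: f\in\mathbb{C}_{\deg D}[z]\}\subseteq E_u(\rho_j)$, hence only $\deg D\leq m-1$. Your dimension count ``source and image both of dimension $m$'' presupposes $\deg D=m-1$, which is precisely what needs proving; to conclude equality one must show the reverse inclusion, i.e., that every $h\in E_u(\rho_j)$ is of the form $(f/D)v$, and nothing in the sketch does this. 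Likewise the claim that the single constraint $((f/D)v\mid u)=0$ annihilates exactly the top coefficient of $f$ is asserted rather than derived; it requires locating $u_j^H$ in the parametrization (which is $f\propto D^*$) and evaluating the Hermitian form on $\mathbb{C}_{m-1}[z]$, and this again uses the factorization $\Psi_j^H=e^{i\psi_j}D^*/D$ that has not been established.
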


We also recall a formula which enables to compute $\|u_k^K\|_{L^2}^2$ and $\|u_j^H\|_{L^2}^2$ in terms of the singular values. For $s\in \Sigma_u^H$, we call $\sigma(s)$ the biggest element of $\Sigma_u^K$ which is smaller than $s$, if it exists, or $0$ otherwise. With this notation, we have the following formulae :

\begin{prop}[{\cite[Proposition 3.2.1]{livrePG}}]\label{norme-projetes}
Let $s=\rho_j\in \Sigma_u^H$ and let $\sigma_k=\sigma(s)$. We have
\begin{gather*}
\|u_j^H\|_{L^2}^2=(s^2-\sigma(s)^2)\prod_{s'\neq s}\frac{s^2-\sigma(s')^2}{s^2-s'^2},\\
\|u_k^K\|_{L^2}^2=(s^2-\sigma(s)^2)\prod_{s'\neq s}\frac{\sigma(s)^2-s'^2}{\sigma(s)^2-\sigma(s')^2},
\end{gather*}
where the products are taken over $s'\in \Sigma_u^H$.
\end{prop}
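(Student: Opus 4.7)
The natural approach is to exploit the rank-one identity $H_u^2 = K_u^2 + (\cdot \mid u)u$ from \eqref{huku} at the level of generating series. I would introduce, for $x$ small enough,
\[\mathscr{H}(x) := ((I - xH_u^2)^{-1}u \mid u), \qquad \mathscr{K}(x) := ((I - xK_u^2)^{-1}u \mid u),\]
whose spectral expansions read
\[\mathscr{H}(x) = \sum_{j \geq 1} \frac{\|u_j^H\|_{L^2}^2}{1 - x\rho_j^2}, \qquad \mathscr{K}(x) = \|u_\infty^K\|_{L^2}^2 + \sum_{k \geq 1} \frac{\|u_k^K\|_{L^2}^2}{1 - x\sigma_k^2},\]
using that $u = H_u(1) \in \Ran H_u \perp \ker H_u^2$ to discard a $u_\infty^H$ term. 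Applying the Sherman--Morrison inversion formula to $I - xH_u^2 = (I - xK_u^2) - x(\cdot \mid u)u$ and pairing the result with $u$ yields the master identity
\[(1 + x\mathscr{H}(x))(1 - x\mathscr{K}(x)) = 1.\]

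The key step is then to identify $1 - x\mathscr{K}$ explicitly as a rational function in the finite-rank case. The master identity shows that it has a simple zero at each pole of $\mathscr{H}$, namely $x = 1/\rho_j^2$ for every $\rho_j \in \Sigma_u^H$, and that its poles are precisely the $x = 1/\sigma_k^2$ for $\sigma_k \in \Sigma_u^K$. By interlacement \eqref{interlacement} and the dimensional count of Proposition \ref{dominance-def}, these zeros and poles are paired canonically via $s \mapsto \sigma(s)$, the boundary case $\sigma(s) = 0$ matching the constant contribution $\|u_\infty^K\|_{L^2}^2$. Comparison of degrees and of the value at $x = 0$ then forces
\[1 - x\mathscr{K}(x) = \prod_{s \in \Sigma_u^H} \frac{1 - xs^2}{1 - x\sigma(s)^2}.\]

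The proposition follows by a residue calculation. Inserting the product formula into the master identity gives an explicit rational expression for $\mathscr{H}(x)$; its residue at $x = 1/\rho_j^2$ equals $-\|u_j^H\|_{L^2}^2/\rho_j^2$ by the spectral expansion, and a direct computation on the product side, isolating the factor $s' = \rho_j$, yields the claimed formula for $\|u_j^H\|_{L^2}^2$. The formula for $\|u_k^K\|_{L^2}^2$ is obtained symmetrically by taking the residue of $\mathscr{K}$ at $x = 1/\sigma_k^2$. I expect the main difficulty to lie in the second step, specifically in the bookkeeping needed to distinguish the two regimes $\#\Sigma_u^H = \#\Sigma_u^K$ (where $\sigma(s) > 0$ for all $s \in \Sigma_u^H$ and $u \perp \ker K_u^2$) and $\#\Sigma_u^H = \#\Sigma_u^K + 1$ (where $\sigma(s) = 0$ at the smallest $s \in \Sigma_u^H$, and $\|u_\infty^K\|_{L^2}^2 > 0$). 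The extension from the rational to the general case should then follow by density and continuity of the formulae in the singular values, using that $H_u^2$ and $K_u^2$ are trace-class for $u \in H^{1/2}_+$.
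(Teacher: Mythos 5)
This proposition is cited from \cite{livrePG} and not reproved in the paper, so there is no in-text argument to compare against; your proposal is correct and essentially reconstructs the proof from that reference using the very same machinery that the paper deploys in Section \ref{cinq}. Your ``master identity'' $(1+x\mathscr{H})(1-x\mathscr{K})=1$ is precisely identity \eqref{KJ2}, namely $\J(x)^{-1}=1-x\Kp(x)$, after noting $\mathscr{H}=\Jp$ and $\mathscr{K}=\Kp$, and the product formula you derive for $1-x\mathscr{K}$ is the classical expression $\J(x)=\prod_{s\in\Sigma_u^H}(1-x\sigma(s)^2)/(1-xs^2)$ that the paper invokes again in the proof of Lemma \ref{ZZ}. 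The two residue computations are routine and give exactly the stated formulas, and you correctly identify the one point requiring bookkeeping: whether $u\perp\ker K_u^2$ (so $\#\Sigma_u^H=\#\Sigma_u^K$, no constant term in $\mathscr{K}$, and numerator and denominator of $1-x\mathscr{K}$ have equal degree) or $u_\infty^K\neq 0$ (so $\#\Sigma_u^H=\#\Sigma_u^K+1$, the smallest $s\in\Sigma_u^H$ has $\sigma(s)=0$, and the numerator has one extra degree, compensating the linear term $-x\|u_\infty^K\|^2$). The density extension from finite rank to $H^{1/2}_+$ is standard given the trace-class property and the continuity of spectral projections that the paper itself uses (cf.\ Corollary \ref{analytic-uk} and Lemma \ref{degradation-fort}).
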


\paragraph{The inverse spectral formula.} In this paragraph, we state a weaker version of the main result in \cite{livrePG}. In the previous propositions, we have associated to each $u\in H^{1/2}_+$ a set of $H$-dominant of $K$-dominant singular values, each of them being linked to some finite Blaschke product. Conversely, let $q\in\mathbb{N}\setminus\{0\}$ and $s_1>s_2> \dots>s_{2q-1}>s_{2q}\geq 0$ some real numbers. Let also $\Psi_n$, $n\in \llbracket 1,2q\rrbracket$, be finite Blaschke products. We define a matrix $\mathscr{C}(z)$, where $z\in \mathbb{D}$ is a parameter, by its coefficients
\[ c_{j,k}=\frac{s_{2j-1}-zs_{2k}\Psi_{2j-1}(z)\Psi_{2k}(z)}{s_{2j-1}^2-s_{2k}^2},\quad 1\leq j,k\leq q.\]

\begin{thm}[{\cite[Theorem 1.0.3]{livrePG}}]\label{formule-inverse}
For all $z\in \mathbb{D}$, $\mathscr{C}(z)$ is invertible, and if we set
\[ u(z):=\sum_{1\leq j,k\leq q} [\mathscr{C}(z)^{-1}]_{j,k}\Psi_{2k-1}(z),\]
then $u\in \mathcal{V}(2q)$ (or $u\in \mathcal{V}(2q-1)$ if $s_{2q}=0$). 

Furthermore, it is the unique function in $H^{1/2}_+$ such that the $H$-dominant and $K$-dominant singular values associated to $u$ are given respectively by the $s_{2j-1}$'s, $j\in \llbracket 1,q\rrbracket$, and by the $s_{2k}$'s, $k\in \llbracket 1,q\rrbracket$, and such that the Blaschke products associated to these singular values are given respectively by $\Psi_{2j-1}$, $j\in \llbracket 1,q\rrbracket$, and by $\Psi_{2k}$, $k\in \llbracket 1,q\rrbracket$.
\end{thm}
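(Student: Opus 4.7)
The proof naturally splits into uniqueness and existence. For uniqueness, suppose $u \in H^{1/2}_+$ carries the prescribed singular values and Blaschke products. Since $u = H_u(1) \perp \ker H_u^2$, the decomposition $u = \sum_{j=1}^q u_j^H$ follows from the spectral theorem applied to the compact self-adjoint operator $H_u^2$. Propositions \ref{blaschke-dimension} and \ref{norme-projetes} jointly determine each $u_j^H$ from the data $(s_{2j-1},\Psi_{2j-1})$: Proposition \ref{blaschke-dimension} rigidly encodes the direction of $u_j^H$ via $s_{2j-1} u_j^H = \Psi_{2j-1} H_u(u_j^H)$ (taking into account the angle of $\Psi_{2j-1}$), and Proposition \ref{norme-projetes} fixes $\|u_j^H\|_{L^2}^2$ in terms of the $s_n$'s alone. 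Thus $u$ is entirely determined by its spectral data.

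For existence, my plan is to derive the formula by solving a linear system pointwise in $z \in \mathbb{D}$. Evaluating $u = \sum_j u_j^H$ at $z$ and using $s_{2k-1} u_k^H(z) = \Psi_{2k-1}(z) H_u(u_k^H)(z)$, I introduce unknowns $v_k(z) := u_k^H(z)/\Psi_{2k-1}(z)$. Expanding $H_u(u_k^H)$ simultaneously in the basis of $H$-eigenvectors and, via $K_u = S^* H_u$, in the basis of $K$-eigenvectors, to which Proposition \ref{blaschke-dimension} is then applied on the $K$-side, yields a $q\times q$ linear system for the $v_k(z)$'s. A direct computation reveals that its coefficient matrix is exactly $\mathscr{C}(z)$ and that its right-hand side is the vector $(\Psi_{2k-1}(z))_k$. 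The reconstruction formula then follows from $u(z) = \sum_k \Psi_{2k-1}(z) v_k(z)$.

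The key technical point is invertibility of $\mathscr{C}(z)$ for every $z \in \mathbb{D}$. At $z=0$, $\mathscr{C}(0) = \bigl[s_{2j-1}/(s_{2j-1}^2 - s_{2k}^2)\bigr]_{j,k}$ is a classical Cauchy-type matrix, nonsingular because the $s_n$'s are pairwise distinct. For $z \in \mathbb{D}$, the idea is to exploit $|\Psi_n(z)|<1$: the cleanest route I see is a coercivity argument, rewriting the Hermitian form $(\mathscr{C}(z)+\mathscr{C}(z)^*)$, after diagonal rescaling by $\mathrm{diag}(s_{2j-1})^{-1}$, as an integral of positive contributions via a Cauchy kernel identity. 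This is where I expect the computation to be most delicate, since it is what genuinely uses the Blaschke structure of the $\Psi_n$'s rather than merely boundedness.

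Finally, I would verify that the constructed $u$ belongs to $\mathcal{V}(2q)$ (or $\mathcal{V}(2q-1)$ when $s_{2q}=0$) and realizes the prescribed spectral data. Rationality and the degree count follow from Cramer's rule: $\det \mathscr{C}(z)$ is a polynomial of degree at most $q$ with no zero in $\mathbb{D}$ by the previous step, and one checks the boundary case $|z|=1$ using $|\Psi_n(e^{ix})|=1$. Matching the spectral data is the main obstacle: rather than computing $H_u^2$ and $K_u^2$ from scratch, I would exhibit explicit candidates for $u_j^H$ and $u_k^K$ as linear combinations built from the columns of $\mathscr{C}(z)^{-1}$ and the Blaschke products, verify the eigenvalue equations directly from the definition of $c_{j,k}$, and invoke the uniqueness step to exclude any extra eigenvalues in the spectra of $H_u^2$ and $K_u^2$.
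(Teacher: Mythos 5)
This theorem is imported from \cite[Theorem 1.0.3]{livrePG} and is stated in the present paper without proof, so there is no in-paper argument against which to compare your proposal; the remarks below concern its internal soundness.

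Your uniqueness paragraph is circular as written: the relation $s_{2j-1}u_j^H = \Psi_{2j-1}H_u(u_j^H)$ from Proposition~\ref{blaschke-dimension} involves the operator $H_u$, which depends on the unknown function $u$, so the pair $(s_{2j-1},\Psi_{2j-1})$ does not determine $u_j^H$ in isolation; the projections $u_j^H$ are coupled across all indices through $H_u$. The linear system you set up in the existence paragraph is precisely what breaks this circularity, and once $\mathscr{C}(z)$ is known to be invertible it also delivers uniqueness for free when applied to an arbitrary $u$ carrying the prescribed spectral data --- so your first paragraph should be absorbed into the second rather than stand on its own. The invertibility of $\mathscr{C}(z)$ on all of $\mathbb{D}$ is, as you correctly flag, the heart of the matter, and your coercivity idea is in the right spirit but is left entirely as a sketch; a statement of the form ``rewrite as an integral of positive contributions via a Cauchy kernel identity'' cannot be checked without carrying out the computation, which is where most of the actual work of the theorem resides. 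Finally, invoking the uniqueness step to rule out spurious eigenvalues is not quite sufficient on its own: one first needs an a priori rank bound on $H_u$ and $K_u$ (from the degree count on $\det\mathscr{C}(z)$ and the structure of $\mathcal{V}(2q)$) before uniqueness can be brought to bear on the spectrum.
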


\vspace*{1em}
\section{The additional conservation laws \texorpdfstring{$\ell_j$}{lj}}\label{trois}

In the sequel, we show how to prove simply that $\ell_k(u)$ is conserved along solutions of the quadratic Szeg\H{o} equation \eqref{quad}. We then intend to prove Proposition \ref{CNS} and its corollary : we give a necessary condition for growth of Sobolev norms to occur in the rational case. Let us mention that this condition will be an adaptation of the results of \cite{Xu3} in our context.

\subsection{Evolution of \texorpdfstring{$u_k^K$}{uk} and \texorpdfstring{$w_k^K$}{wk}}
Recall that if $\sigma_k^2$ is the $k$-th eigenvalue of $K_u^2$ (by convention, we set $\sigma_\infty=0$), we have called $F_u(\sigma_k):=\ker (K_u^2-\sigma_k^2I)$, and we have defined $u_k^K$ (resp. $w_k^K$) to be the orthogonal projection of $u$ (resp. $H_u(u)$) onto $F_u(\sigma_k)$.

We first calculate the evolution of $u_k^K$ and $w_k^K$.

\begin{lemme}
Suppose that $t\mapsto u(t)$ is a smooth solution of \eqref{quad}. Then we have
\begin{gather}
\dot{u}_k^K=B_uu_k^K-iJw_k^K, \label{evol-uk}\\
\dot{w}_k^K=B_uw_k^K+i\bar{J}(2Q+\sigma_k^2)u_k^K. \label{evol-wk}
\end{gather}
\end{lemme}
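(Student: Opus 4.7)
The strategy is to differentiate the spectral projector $P_k := \mathbbm{1}_{\{\sigma_k^2\}}(K_u^2)$ and the vectors it is applied to, then reduce everything to $u_k^K$ and $w_k^K$ using the identity \eqref{huku}. First, I would record three preliminary facts that will feed all subsequent computations:
\begin{enumerate}
\item A direct computation from the definition $B_u = -i(T_{\bar{J}u}+T_{J\bar u})$ gives
\[ B_u u = -i\bar J\, u^2 - iJ\,\Pi(|u|^2)=-i\bar J u^2 - iJ H_u(u),\]
since $H_u(u)=\Pi(u\bar u)=\Pi(|u|^2)$. Comparing with the equation \eqref{quad} rewritten as $\dot u = -2iJH_u(u)-i\bar J u^2$ yields the key identity $\dot u - B_u u = -iJ\, H_u(u)$.
\item Since the eigenvalues $\sigma_k^2$ are constant in time by Corollary~\ref{coro-lax}, and since $\frac{d}{dt}K_u^2=[B_u,K_u^2]$ follows from the Lax equation for $K_u$, the spectral projector $P_k$ satisfies $\dot P_k = [B_u,P_k]$. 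Equivalently, $P_k$ commutes with $K_u^2$ at all times.
\item From \eqref{huku}, $H_u^2(u)=K_u^2(u)+(u|u)u = K_u^2(u)+Qu$, and therefore
\[ P_k H_u^2(u)=K_u^2 P_k u + Q\, P_k u = (\sigma_k^2+Q)\, u_k^K.\]
\end{enumerate}

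For \eqref{evol-uk} I would simply expand
\[\dot u_k^K = \dot P_k\, u + P_k \dot u = (B_uP_k-P_kB_u)u + P_k(B_u u - iJ H_u u),\]
where the last equality uses item (i). The two middle terms $-P_kB_u u$ and $+P_kB_uu$ cancel, leaving $\dot u_k^K = B_u u_k^K - iJ\,P_kH_u(u)=B_uu_k^K - iJ\, w_k^K$.

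For \eqref{evol-wk} the computation is less automatic because $H_u$ is antilinear. I would first compute $\frac{d}{dt}(H_u u)$ as the sum of the derivative of the operator applied to $u$ and the operator applied to $\dot u$. The Lax identity gives $\dot H_u(u)=B_uH_u u - H_uB_u u + i\bar J Q u$, while antilinearity combined with item (i) gives
\[ H_u(\dot u)=H_u(B_u u - iJH_uu)=H_u(B_uu)+i\bar J\, H_u^2(u).\]
The two occurrences of $H_uB_u u$ cancel, producing the clean formula
\[ \tfrac{d}{dt}(H_u u)=B_u(H_u u)+i\bar J\bigl(Qu+H_u^2(u)\bigr).\]
Then $\dot w_k^K=\dot P_k(H_uu)+P_k\tfrac{d}{dt}(H_uu)$; again the $P_kB_uH_uu$ terms cancel and I am left with $B_u w_k^K + i\bar J(Q u_k^K + P_kH_u^2u)$. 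Substituting item (iii) yields exactly $B_uw_k^K + i\bar J(2Q+\sigma_k^2)u_k^K$.

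The only genuine subtlety is the bookkeeping around the antilinearity of $H_u$ in step for $w_k^K$: it is tempting to write $H_u(\dot u)=\dot{\overline{\phantom{u}}}\cdots$ and get signs wrong, but rewriting $\dot u = B_u u - iJH_u u$ before applying $H_u$ makes the cancellation of the problematic $H_uB_uu$ term transparent and produces the symmetric coefficient $2Q+\sigma_k^2$ via \eqref{huku}.
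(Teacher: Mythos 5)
Your proposal is correct and follows essentially the same route as the paper: differentiate $u_k^K=P_k u$ and $w_k^K=P_k H_u u$ by the Leibniz rule, use the Lax relation for $K_u$ (hence for $P_k$) and the Lax-like relation for $H_u$, observe the identity $\dot u=B_u u-iJH_u u$ (the paper's \eqref{evol-u}), and reduce $P_kH_u^2u$ to $(Q+\sigma_k^2)u_k^K$ via \eqref{huku}. The only cosmetic difference is that you package the two Leibniz pieces for $H_uu$ into an intermediate formula $\frac{d}{dt}(H_uu)=B_uH_uu+i\bar J(Qu+H_u^2u)$ before projecting; also, the parenthetical ``Equivalently, $P_k$ commutes with $K_u^2$'' is an imprecise aside (that commutation holds trivially, not as a consequence of the Lax equation), but it plays no role in the computation.
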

\begin{proof}[Proof.]
The proof of these identities relies on the Lax pair. First observe that in view of the expression of $B_u$ and of equation \eqref{quad}, we have
\begin{equation}\label{evol-u}
\dot{u}=B_u(u)-iJH_u(u)
\end{equation}
For the evolution of $u_k^K$, set $f:=\mathbbm{1}_{\{\sigma_k^2\}}$ and write
\begin{align*} \frac{d}{dt}u_k^K=\frac{d}{dt}f(K_u^2)u&=[B_u,f(K_u^2)]u+f(K_u^2)(B_uu-iJH_uu)\\
&=B_uf(K_u^2)u-iJf(K_u^2)H_uu\\
&=B_uu_k^K-iJw_k^K,
\end{align*}
which corresponds to \eqref{evol-uk}. As for $w_k^K$,
\begin{align*}
\frac{d}{dt}f(K_u^2)H_uu&=[B_u,f(K_u^2)]H_uu+f(K_u^2)([B_u,H_u]u+i\bar{J}Qu)+ f(K_u^2)H_u(B_uu-iJH_uu)\\
&=B_uf(K_u^2)H_uu+i\bar{J}Qf(K_u^2)u+i\bar{J}f(K_u^2)H_u^2u\\
&=B_uw_k^K+i\bar{J}Qu_k^K+i\bar{J}f(K_u^2)(K_u^2u+Qu)\\
&=B_uw_k^K+i\bar{J}(2Q+\sigma_k^2)u_k^K,
\end{align*}
where we used the relation \eqref{huku} between $H_u^2$ and $K_u^2$. Now \eqref{evol-wk} is proved.
\end{proof}

\begin{prop}\label{conservation-preuve}
With the hypothesis of the preceding lemma, setting
\[\ell_k(t):=(2Q+\sigma_k^2)\|u_k^K(t)\|_{L^2}^2-\|w_k^K(t)\|_{L^2}^2,\]
we have $\frac{d}{dt} \ell_k=0$.
\end{prop}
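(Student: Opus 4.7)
My plan is to combine the two evolution equations of the preceding lemma with the anti-selfadjointness of $B_u$ to compute directly $\frac{d}{dt}\|u_k^K\|^2_{L^2}$ and $\frac{d}{dt}\|w_k^K\|^2_{L^2}$. Since $Q$ and $\sigma_k^2$ are already known to be conserved (by Corollary \ref{coro-lax}), only these two squared norms need to be tracked. The choice of the coefficient $2Q+\sigma_k^2$ in the definition of $\ell_k$ should then make the two contributions cancel.

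\textbf{Step 1: Kill the $B_u$-terms.} Because $B_u = -i(T_{\bar J u}+T_{J\bar u})$ is anti-selfadjoint on $L^2_+$, we have $\re(B_uf\vert f)=0$ for every $f\in L^2_+$. Applied to \eqref{evol-uk} and \eqref{evol-wk}, this gives
\begin{align*}
\frac{d}{dt}\|u_k^K\|^2_{L^2} &= 2\re(\dot u_k^K\vert u_k^K) = -2\re\bigl(iJ(w_k^K\vert u_k^K)\bigr),\\
\frac{d}{dt}\|w_k^K\|^2_{L^2} &= 2\re(\dot w_k^K\vert w_k^K) = 2(2Q+\sigma_k^2)\re\bigl(i\bar J(u_k^K\vert w_k^K)\bigr).
\end{align*}

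\textbf{Step 2: Simplify the real parts.} Denote $z:=J(w_k^K\vert u_k^K)\in\mathbb{C}$. Since $(u_k^K\vert w_k^K)=\overline{(w_k^K\vert u_k^K)}$, one has $\bar J(u_k^K\vert w_k^K)=\bar z$, so $\re(i\bar J(u_k^K\vert w_k^K))=\re(i\bar z)=\im z$. Likewise $\re(iJ(w_k^K\vert u_k^K))=\re(iz)=-\im z$. Substituting,
\begin{align*}
\frac{d}{dt}\|u_k^K\|^2_{L^2} &= 2\,\im\bigl(J(w_k^K\vert u_k^K)\bigr),\\
\frac{d}{dt}\|w_k^K\|^2_{L^2} &= 2(2Q+\sigma_k^2)\,\im\bigl(J(w_k^K\vert u_k^K)\bigr).
\end{align*}

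\textbf{Step 3: Conclude.} Since $Q$ and $\sigma_k^2$ are conserved, the factor $2Q+\sigma_k^2$ passes through the time derivative, and
\[
\frac{d}{dt}\ell_k = (2Q+\sigma_k^2)\frac{d}{dt}\|u_k^K\|^2_{L^2}-\frac{d}{dt}\|w_k^K\|^2_{L^2}=0.
\]
No real obstacle is anticipated: the whole argument is essentially a careful bookkeeping of real and imaginary parts, and the structural reason for the cancellation is that the pairing $(w_k^K\vert u_k^K)$ appears with matching coefficients $2Q+\sigma_k^2$ in both derivatives precisely thanks to the identity \eqref{huku} that produced the term $i\bar J(2Q+\sigma_k^2)u_k^K$ in \eqref{evol-wk}. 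The only subtlety is making sure one respects the convention $\sigma_\infty=0$ so that the statement applies to $\ell_\infty$ as well; the same computation works verbatim by replacing $\sigma_k^2$ by $0$.
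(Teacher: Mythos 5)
Your proof is correct and follows exactly the same route as the paper's: use the anti-selfadjointness of $B_u$ to drop those terms, reduce each $L^2$-norm derivative to a real/imaginary part of $J(w_k^K\vert u_k^K)$, and observe that the coefficients match. The sign bookkeeping checks out (the paper's $-\im(\bar J(u_k^K\vert w_k^K))$ equals your $\im(J(w_k^K\vert u_k^K))$), and your closing remark about $\sigma_\infty=0$ is consistent with the paper's convention.
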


\begin{proof}[Proof.]
As $\sigma_k^2$ and $Q$ are constant, it suffices to compute the time derivative of $\|u_k^K(t)\|_{L^2}^2$ and $\|w_k^K(t)\|_{L^2}^2$. On the one hand, by \eqref{evol-uk},
\begin{equation}\label{evol-u1}
\frac{d}{dt}\|u_k^K\|_{L^2}^2=2\re (\dot{u}_k^K\vert u_k^K)=2\im (J(w_k^K\vert u_k^K)),
\end{equation}
since $B_u$ is anti-selfadjoint. On the other hand, by \eqref{evol-wk},
\[\frac{d}{dt}\|w_k^K\|_{L^2}^2=2\re (\dot{w}_k^K\vert w_k^k)=-2(2Q+\sigma_k^2)\im(\bar J(u_k^K\vert w_k^K)).\]
Thus $\frac{d}{dt}\|w_k^K\|_{L^2}^2 =(2Q+\sigma_k^2)\frac{d}{dt}\|u_k^K\|_{L^2}^2$, which yields the conservation of $\ell_k$.
\end{proof}

Hereafter, we give another expression of $\ell_k$ by means of the spectral theory of $H_u$ and $K_u$ (see Section \ref{deux}). Fix some $u\in H^{1/2}_+$.
\begin{lemme}\label{formulation-alternative}
Let $\sigma_k^2$ be a non-zero eigenvalue of $K_u^2$.
\begin{enumerate}
\item Suppose $\sigma_k\in \Sigma^K_u$. Then $u_k^K\neq 0$ and $w_k^K$ is colinear to $u_k^K$. Consequently,
\[ \ell_k(u)=\|u_k^K\|_{L^2}^2\left((2Q+\sigma_k^2)-|\xi_k|^2\right),\]
where
\begin{equation}\label{xik}
\xi_k:=\left( \Pi(|u|^2)\middle\vert \frac{u_k^K}{\|u_k^K\|_{L^2}^2}\right).
\end{equation}
\item Suppose $\sigma_k\in \Sigma^H_u$. Then $u_k^K=0$ and $w_k^K\neq 0$, hence
\[ \ell_k(u)<0.\]
\end{enumerate}
\end{lemme}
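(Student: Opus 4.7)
The plan is to exploit two simple observations. First, $\Pi(|u|^2) = H_u(u)$ (since $H_u(1)=u$ gives $H_u^2(1)=H_u(u)=\Pi(u\bar u)$), so $w_k^K$ is the orthogonal projection of $H_u(u)$ onto $F_u(\sigma_k)$. Second, the antilinearity of $H_u$ yields the symmetry $(H_u v \vert w) = (H_u w \vert v)$ for all $v,w \in L^2_+$ (both sides equal $\int u\bar v \bar w$). From this one checks that $H_u$ preserves $E_u(s)$ and, crucially, also its orthogonal complement: if $v \perp E_u(s)$, then for $w \in E_u(s)$ one has $(H_u v \vert w) = (H_u w \vert v) = 0$, since $H_u w \in E_u(s)$.

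For part (i), I would use Proposition \ref{dominance-def}: $\sigma_k \in \Sigma_u^K$ gives $u \not\perp F_u(\sigma_k)$ (so $u_k^K \neq 0$) together with the decomposition $F_u(\sigma_k) = \mathbb{C} u_k^K \oplus E_u(\sigma_k)$, where $E_u(\sigma_k) \subset u^\perp$. It then suffices to show $w_k^K$ is orthogonal to $E_u(\sigma_k)$: for $v \in E_u(\sigma_k)$,
\[ (w_k^K \vert v) = (H_u(u) \vert v) = (H_u(v) \vert u) = 0, \]
because $H_u(v) \in E_u(\sigma_k) \subset u^\perp$. Hence $w_k^K = \alpha u_k^K$ for some $\alpha \in \mathbb{C}$, and taking the inner product with $u_k^K/\|u_k^K\|_{L^2}^2$ identifies $\alpha = \xi_k$; the asserted formula for $\ell_k(u)$ is immediate.

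For part (ii), assume $\sigma_k \in \Sigma_u^H$ and write $\sigma_k = \rho_j$. Then $F_u(\rho_j) = E_u(\rho_j) \cap u^\perp \subset u^\perp$, so $u_k^K = 0$ and $\ell_k(u) = -\|w_k^K\|_{L^2}^2$. It remains to show $w_k^K \neq 0$. Splitting $u = u_j^H + (u-u_j^H)$ with $u - u_j^H \perp E_u(\rho_j)$, the $H_u$-invariance of $E_u(\rho_j)^\perp$ gives $w_k^K = P_{F_u(\rho_j)} H_u(u_j^H)$. Since $H_u(u_j^H) \in E_u(\rho_j) = \mathbb{C} u_j^H \oplus F_u(\rho_j)$, the vanishing of $w_k^K$ would force $H_u(u_j^H) \in \mathbb{C} u_j^H$. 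But Proposition \ref{blaschke-dimension} yields $\rho_j u_j^H = \Psi_j^H H_u(u_j^H)$, and since $|\Psi_j^H|=1$ a.e. on $\mathbb{T}$ this rewrites as $H_u(u_j^H) = \rho_j \overline{\Psi_j^H}\, u_j^H$. Colinearity with $u_j^H$ would then force $\overline{\Psi_j^H}$ to be constant on the support of $u_j^H$; however $u_j^H \in \mathbb{H}^2 \setminus \{0\}$ has nonzero boundary values almost everywhere on $\mathbb{T}$, so $\Psi_j^H$ would be a constant Blaschke product. This contradicts $\deg \Psi_j^H = \dim E_u(\rho_j) - 1 \geq 1$, the lower bound holding because $\sigma_k^2$ is a nonzero eigenvalue of $K_u^2$, i.e.\ $\dim F_u(\rho_j) \geq 1$.

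The main obstacle is the nonvanishing step in case (ii): the heart of the argument is the Hardy-class rigidity principle that a nonzero $\mathbb{H}^2$ function cannot vanish on a positive-measure subset of $\mathbb{T}$, which upgrades the pointwise relation $H_u(u_j^H) = \rho_j \overline{\Psi_j^H} u_j^H$ into the global assertion ``$\Psi_j^H$ is constant'' and provides the contradiction through the Blaschke degree.
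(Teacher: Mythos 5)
Your proof is correct and follows essentially the same route as the paper: part (i) uses the antilinear symmetry $(H_u v\mid w)=(H_u w\mid v)$ together with $E_u(\sigma_k)\subset u^\perp$ to show $w_k^K\perp E_u(\sigma_k)$, exactly as the paper does, and part (ii) reduces to showing $H_u(u_j^H)\notin\mathbb{C}u_j^H$ via Proposition \ref{blaschke-dimension}, again as in the paper. The only distinction is that you spell out the Hardy-space rigidity (a nonzero $\mathbb{H}^2$ function has a.e.\ nonvanishing boundary values, so the relation $\rho_j u_j^H=\Psi_j^H H_u(u_j^H)$ forces $\Psi_j^H$ to be a constant Blaschke product of degree $0$) — a step the paper treats as an immediate consequence of Proposition \ref{blaschke-dimension} — so this is a welcome clarification rather than a different approach.
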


\begin{proof}[Proof.]
Let us first examine the case when $\sigma_k\in \Sigma_u^K$. Then $u_k^K\neq 0$ by Proposition \ref{dominance-def}. Now, if $h\in F_u(\sigma_k)$ and $h\perp u_k$, it means that $(h\vert u)=0$ and $h\in E_u(\sigma_k)$, \emph{i.e.} $H_u^2h=\sigma_k^2h$. We have thereby
\[ (w_k^K\vert h)=(\Pi(|u|^2)\vert h)=(H_uu\vert h)=(H_uh\vert u)=0,\]
since $H_uh\in E_u(\sigma_k)$, so $H_uh\perp u$. This proves that $w_k^K$ and $u_k^K$ are colinear, and the formula with $\xi_k$ immediately follows, since
\[w_k^K=\left(w_k^K\middle\vert \frac{u_k^K}{\|u_k^K\|_{L^2}}\right) \frac{u_k^K}{\|u_k^K\|_{L^2}}.\]

In the case when $\sigma_k\in \Sigma_u^H$, by Proposition \ref{dominance-def} again, we have $u_k^K=0$. Let us turn to $w_k^K$. First, setting $f=\mathbbm{1}_{\{\sigma_k^2\}}$, we observe that $f(H_u^2)(\Pi(|u|^2))=H_uf(H_u^2)(u)=H_uu_k^H\neq 0$, because $u_k^H\neq 0$ and $H_u$ is one-to-one on $E_u(\sigma_k)$. Now observe that $H_uu_k^H\notin \mathbb{C}u_k^H$, otherwise we would have $\dim E_u(\sigma_k)=1$ by Proposition \ref{blaschke-dimension}, and thus $\dim F_u(\sigma_k)=0$, which contradicts the assumption that $\sigma_k^2$ is an eigenvalue of $K_u^2$. Therefore,
\[ w_k^K=f(K_u^2)f(H_u^2)\left(\Pi(|u|^2)\right) =f(K_u^2)H_uu_k^H\neq 0,\]
since $F_u(\sigma_k)=E_u(\sigma_k)\cap (u_k^H)^\perp$. The second part of the lemma is proved.
\end{proof}

\begin{rem}
Let us make a series of remarks on the case $k=\infty$. For $\sigma_\infty=0$, it is also true that $w^K_\infty$ and $u_\infty^K$ are colinear. Indeed, if $h\in \ker K_u^2$ and $h\perp u$, then
\[ 0=(K_u^2(h)\vert 1)=(h\vert K_u^2(1))=\left( h\vert H_u^2(1)+(1\vert u)u\right)=(h\vert H_u(u)),\]
so $h\perp H_u(u)$. In particular, if $u\perp \ker K_u^2$, then $H_u(u)\perp \ker K_u^2$.

When $u_\infty^K\neq 0$, then
\[\xi_\infty^K = \left( H_u^2(1)\middle| \frac{u_\infty^K}{\|u_\infty^K\|_{L^2}^2}\right) = \left( 1\middle| \frac{K_u^2(u_\infty^K)+(u_\infty^K\vert u)u}{\|u_\infty^K\|_{L^2}^2}\right) = (1\vert u),\]
thus (even if $u\perp \ker K_u^2$),
\begin{equation}\label{rem-noyau}
\ell_\infty=\|u_\infty^K\|^2_{L^2}(2Q-|(u\vert 1)|^2).
\end{equation}

It is worth noticing that identity \eqref{rem-noyau} yields another proof of the fact that the submanifold $\{ \rg H_u^2=D\}$ of $L^2_+$ is stable by the flow of \eqref{quad} (see \cite[Corollary 2.2]{thirouin-prog}). Indeed, it suffices to show that when $\rg K_u^2 = D'<+\infty$, $\rg H_u^2$ cannot pass from $D'$ to $D'+1$ or conversely. The condition $\rg H_u^2=\rg K_u^2=D'$ for some $u\in H^{1/2}_+$ means that $\im H_u^2=\im K_u^2$ (since the inclusion $\supseteq$ is always true). As $u=H_u(1)\in \im H_u=\im H_u^2$, we then have $u\in \im K_u^2$. Hence $u^K_\infty=0$. But since $\|u^K_\infty\|^2_{L^2}(2Q-|(u\vert 1)|^2)$ is conserved by the flow, and as $2Q-|(u\vert 1)|^2\geq Q$ by Cauchy-Schwarz, we see that if $u^K_\infty=0$ at time $0$, then it must remain true for all times.

Now, if $u^K_\infty=0$ for some $u\in H^{1/2}_+$, it means that $u\in (\ker K_u^2)^\perp= \im K_u^2$, so writing $K_u^2=H_u^2-(\cdot \vert u)u$ shows that $\im H_u^2=\im K_u^2$. Conversely, if $u^K_\infty\neq 0$ at time $0$, it will never cancel.
\end{rem}

\subsection{About the dominance of eigenvalues of \texorpdfstring{$K_u^2$}{Ku2}}
During the proof of Corollary \ref{annulation}, we will need to know how often $u_k^K$ may be zero. Indeed, the eigenvalues of $K_u^2$ are conserved, but as the eigenvalues of $H_u^2$ have a non trivial evolution in time, it could perfectly happen that a $K$-dominant singular value associated to $u$ transforms into a $H$-dominant one : such a phenomenon is called \emph{crossing} in \cite{Xu3}, and we follow this terminology. The purpose of this section is then to prove the following proposition.

\begin{prop}\label{discrete}
Suppose that $t\mapsto u(t)$ is a solution of the quadratic Szeg\H{o} equation \eqref{quad} in $\mathcal{V}(d)$, and suppose that $u$ is not constant in time. Then there exists a discrete set $\Lambda\subset \mathbb{R}$ such that when $t\notin\Lambda$, all the eigenvalues of $K_u^2$ are $K$-dominant.
\end{prop}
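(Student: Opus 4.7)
The strategy is to exploit real-analyticity in time on the finite-dimensional manifold $\mathcal{V}(d)$ to reduce the question to showing that $u_k^K(t)$ is not identically zero, and then derive a contradiction with the hypothesis that $u$ is non-constant if it were.

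First I would set the analytic framework. Since $\mathcal{V}(d)$ is a finite-dimensional complex manifold and the vector field $X_\mathcal{H}$ is polynomial in $u$ and $\bar u$ (with $J$ being a polynomial in the coefficients describing $u$ as a rational function), the restricted flow is a real-analytic ODE; hence $t\mapsto u(t)$ is real-analytic. Moreover, by Corollary \ref{coro-lax} the spectrum of $K_{u(t)}^2$ is time-independent, so for each nonzero eigenvalue $\sigma_k^2$ we may fix once and for all a small contour $\gamma_k\subset\mathbb{C}$ enclosing $\sigma_k^2$ and no other element of $\spec K_{u(t)}^2$. The Riesz projector
\[ P_k(t):=\frac{1}{2\pi i}\oint_{\gamma_k}(\lambda I-K_{u(t)}^2)^{-1}\, d\lambda = \mathbbm{1}_{\{\sigma_k^2\}}(K_{u(t)}^2) \]
then depends real-analytically on $t$ because $u\mapsto K_u^2$ is real-analytic into the bounded operators on $L^2_+$. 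Consequently $u_k^K(t)=P_k(t)u(t)$ and $w_k^K(t)=P_k(t)\Pi(|u(t)|^2)$ are real-analytic functions of $t$.

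The real-analytic scalar function $t\mapsto\|u_k^K(t)\|_{L^2}^2$ therefore either vanishes on a discrete set or vanishes identically. The heart of the argument is excluding the second alternative. Suppose by contradiction that $u_k^K\equiv 0$ for some $k$ with $\sigma_k^2>0$. Inserting this into \eqref{evol-uk} yields $J(t)\, w_k^K(t)\equiv 0$. The function $J(t)$ is itself real-analytic, so there are two subcases. If $J\equiv 0$, then \eqref{quad} gives $\partial_t u\equiv 0$, contradicting the assumption that $u$ is non-constant. Otherwise $J$ vanishes only on a discrete set, so $w_k^K$ vanishes on a dense open set, and by analyticity $w_k^K\equiv 0$ as well. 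But at each time $\sigma_k^2>0$ is an eigenvalue of $K_u^2$, and Lemma \ref{formulation-alternative} forces either $u_k^K\neq 0$ ($\sigma_k\in\Sigma_u^K$) or $w_k^K\neq 0$ ($\sigma_k\in\Sigma_u^H$) — a contradiction in both cases.

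Hence for every nonzero $\sigma_k^2\in\spec K_u^2$, the set $\Lambda_k:=\{t\in\mathbb{R}:u_k^K(t)=0\}$ is discrete. Since $u(t)\in\mathcal{V}(d)$ the operator $K_{u(t)}^2$ has only finitely many distinct nonzero eigenvalues (at most $\lfloor d/2\rfloor$), so $\Lambda:=\bigcup_k\Lambda_k$ is a finite union of discrete sets, and is therefore itself discrete. For $t\notin\Lambda$, $u_k^K(t)\neq 0$ for every nonzero $\sigma_k^2$, which by Lemma \ref{formulation-alternative} means every such eigenvalue is $K$-dominant, as required. The only delicate point is the analytic dependence of the Riesz projector, which is completely standard once the spectrum is known to be constant; all other ingredients are direct consequences of the Lax structure and of the evolution equations \eqref{evol-uk}--\eqref{evol-u}.
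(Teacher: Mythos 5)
Your argument is correct and follows essentially the same route as the paper: real-analyticity of $t\mapsto u_k^K(t)$, then a contradiction derived from \eqref{evol-uk} and Lemma \ref{formulation-alternative} under the hypothesis $u_k^K\equiv 0$. There are two small variants worth noting, both of which are valid. First, for real-analyticity of the solution you invoke the finite-dimensional real-analytic ODE on $\mathcal{V}(d)$, whereas the paper proves a more general statement (real-analyticity in $t$ of any $H^s$-smooth solution, via a combinatorial bound on $\|\partial_t^n u\|$); your version is more elementary but restricted to rational data, which is all that is needed here, although be careful that $J$ is not a \emph{polynomial} of the rational-function coefficients but rather a rational expression in them (with nonvanishing denominators, since the poles lie outside $\overline{\mathbb{D}}$), so ``real-analytic'' is the accurate description. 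Second, at the contradiction step the paper uses $u_k^K\equiv 0$ together with Lemma \ref{formulation-alternative} to see that $\ell_k=-\|w_k^K\|^2$ is a \emph{negative conserved quantity}, hence $w_k^K$ never vanishes, and then $J\equiv 0$ follows directly from $Jw_k^K\equiv 0$; you instead run a dichotomy on whether $J\equiv 0$ or $J$ has discrete zeros, in the latter case concluding $w_k^K\equiv 0$ by continuity, which contradicts Lemma \ref{formulation-alternative} since $u_k^K$ and $w_k^K$ cannot vanish simultaneously for $\sigma_k\in\Xi_u^K$. Both chains of reasoning are sound and use the same ingredients; the paper's phrasing via conservation of $\ell_k$ is slightly more economical but does not carry any additional content.
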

To put it in a different way, if $t\notin \Lambda$, then
\[\Xi_{u(t)}^K=\Sigma_{u(t)}^K,\]
and every $H$-dominant singular value associated to $u(t)$ is therefore of multiplicity $1$. It means that crossing cannot happen outside a discrete set of times.

To prove this proposition, we start from a lemma which applies to all smooth solutions (not only the rational ones) :
\begin{lemme}
Let $s>\frac{1}{2}$ and $u_0\in H^s_+(\mathbb{T})$. Then the solution $t\mapsto u(t)$ of \eqref{quad} such that $u(0)=u_0$ is real analytic in the variable $t\in\mathbb{R}$, taking values in the Hilbert space $H^s$.
\end{lemme}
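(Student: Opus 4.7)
The strategy is to view \eqref{quad} as an ordinary differential equation $\dot u = X_{\mathcal{H}}(u)$ on the Banach space $H^s_+(\mathbb{T})$ whose right-hand side is a polynomial (in $u$ and $\bar u$), hence a real-analytic vector field, and then to invoke the classical theorem that ODEs with analytic vector field in a Banach space have analytic solutions. The passage from local to global analyticity in time will use the global well-posedness of \eqref{quad} in $H^s_+$ recalled from \cite{thirouin2}.

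The preliminary step is to check that
\[
X_{\mathcal{H}}(u) = -2iJ(u)\,\Pi(|u|^2) - i\bar{J}(u)\,u^2
\]
defines a continuous polynomial map $H^s_+ \to H^s_+$. Since $s > \tfrac{1}{2}$, $H^s(\mathbb{T})$ is a Banach algebra under pointwise multiplication and $\Pi$ is bounded on $H^s$, so both $\Pi(|u|^2)$ and $u^2$ lie in $H^s_+$ with norms controlled by $\|u\|_{H^s}^2$. The scalar $J(u)=\int_{\mathbb{T}}u^2\bar u$ is a continuous trilinear form in $(u,u,\bar u)$. Hence $X_{\mathcal{H}}$ is a polynomial of total degree $5$ in the variables $(u,\bar u)$, in particular real analytic on all of $H^s_+$.

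I would then construct the solution locally around any fixed $t_0$ as a Taylor series $u(t_0+h)=\sum_{n\geq 0}\frac{h^n}{n!}\,u^{(n)}(t_0)$, the coefficients $u^{(n)}(t_0)$ being obtained by repeatedly differentiating the equation in time: $u^{(1)}=X_{\mathcal{H}}(u)$, $u^{(2)}=dX_{\mathcal{H}}(u)\bigl(X_{\mathcal{H}}(u)\bigr)$, and so on. Each $u^{(n)}(t_0)$ is thereby an explicit polynomial expression in $u(t_0)$ and $\overline{u(t_0)}$. Iterating the algebra estimate $\|fg\|_{H^s}\lesssim\|f\|_{H^s}\|g\|_{H^s}$, together with a standard majorant argument (comparing with the scalar Cauchy problem $\dot y = M(1+y)^5$, $y(0)=0$, whose solution is explicit and analytic on some disc $|t|<r(M)$), yields a bound of the form
\[
\|u^{(n)}(t_0)\|_{H^s}\leq C\,n!\,R^n,\quad n\geq 0,
\]
with $C,R>0$ depending only on $\|u(t_0)\|_{H^s}$. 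The Taylor series therefore converges on an interval $(t_0-r,t_0+r)$ with $r=1/R$, and coincides with $u$ there by Picard uniqueness.

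To pass from local to global analyticity, I would use that on any compact interval $[-T,T]$, the global well-posedness in $H^s$ gives $\sup_{|t|\leq T}\|u(t)\|_{H^s}<+\infty$ (even if this supremum grows with $T$), so the local radius $r(t_0)$ above is uniformly bounded below on $[-T,T]$. A finite covering then produces real analyticity of $t\mapsto u(t)$ on $[-T,T]$, and letting $T\to\infty$ concludes. The only delicate point is the inductive bookkeeping needed to obtain the geometric estimate $\|u^{(n)}\|_{H^s}\leq C\,n!\,R^n$; but this is the classical argument for ODEs with polynomial vector fields on a Banach algebra, and nothing specific to the Szeg\H{o} structure is required beyond the algebra property of $H^s$.
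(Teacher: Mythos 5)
Your proposal is correct and takes essentially the same route as the paper: both treat \eqref{quad} as an ODE with polynomial right-hand side on the Banach algebra $H^s_+$ ($s>\tfrac{1}{2}$), both aim at the bound $\|\partial_t^n u\|_{H^s}\le C n!\,R^n$ uniform on compact time intervals, and both then conclude analyticity from global well-posedness. The only difference is cosmetic: where you invoke a Cauchy majorant comparison with the scalar problem $\dot y = M(1+y)^5$, the paper instead carries out the explicit combinatorial bookkeeping (writing the degree-$5$ nonlinearity as a sum of three quintic monomials, then counting that $\partial_t^n u$ is a sum of $c_n\le 12^n n!$ monomials of degree $4n+1$); these are two standard realizations of the same factorial estimate.
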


\begin{proof}[Proof.]
It is enough to prove the lemma on compact sets of $\mathbb{R}$, so we fix $T>0$, and if $f:[-T,T]\to H^s$ is a continuous function, we denote by
\[ \|f\|_T:=\max_{t\in [-T,T]} \|f(t)\|_{H^s}.\]
Recall that for $s>\frac{1}{2}$, $H^s_+(\mathbb{T})$ is an algebra, and that $\Pi: H^s\to H^s_+$ is bounded and has norm $1$. Therefore, the proof we are going to give only resorts to an ODE framework.

First of all, from the Cauchy-Lipschitz theorem, $t\mapsto u(t)$ is $C^\infty$ on $\mathbb{R}$. It then suffices to prove that there exists constants $c_0, C>0$ such that
\[ \left\|\frac{\partial^n u}{\partial t^n}\right\|_T\leq c_0 C^nn! \, ,\qquad \forall n\geq 0.\]
Write equation \eqref{quad} in the following way :
\begin{equation}\label{eq-recurrence} \partial_tu=-i\left(\int_\mathbb{T}|u|^2u\right)\Pi(|u|^2)-i\bar J \left(\int_\mathbb{T}|u|^2\bar{u}\right) u^2-iJ\left(\int_\mathbb{T}|u|^2u\right)\Pi(|u|^2),
\end{equation}
so that $\partial_t u$ appears to be a sum of three terms, each of them being a ``product'' of five copies of $u$. Now, it is clear that for $n\geq 0$, $\partial_t^nu$ will be a sum of $c_n$ terms, each of which contains a ``product'' of $d_n$ copies of $u$.

Let us find the induction relation between $c_{n+1}$ and $c_n$, and between $d_{n+1}$ and $d_n$. If we differentiate $\partial_t^nu$, the time-derivative is going to hit, one after another, each of the $d_n$ factors $u$ of the $c_n$ terms of the sum, and for each of them, it will create three terms by \eqref{eq-recurrence}. Thus
\[ c_{n+1}=3d_nc_n.\]
As for $d_{n+1}$, the time-derivative will remove one the $u$ factors and replace it by $5$ others, so
\[ d_{n+1}=d_n+4.\]
Consequently, $d_n=4n+d_0=4n+1$, and $c_{n+1}=3(4n+1)c_n\leq 12(n+1)c_n$, for all $n\geq 0$. By an easy induction, using $c_0=1$, we thus have
\[ c_n\leq 12^n(n!).\]
Finally, we bound each of the $d_n$ factors of the $c_n$ terms by $\|u\|_T$, so we get
\[ \left\|\frac{\partial^n u}{\partial t^n}\right\|_T\leq 12^n\|u\|_T^{4n+1}(n!),\]
which gives the result.
\end{proof}

\begin{cor}\label{analytic-uk}
Let $\sigma_k\in \Xi_u^K$. Then $t\mapsto [u(t)]_k^K$ is real analytic.
\end{cor}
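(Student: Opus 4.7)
The plan is to represent $[u(t)]_k^K$ through the Riesz projector formula and exploit the conservation of the spectrum of $K_u^2$, together with the real analyticity of $t\mapsto u(t)$ provided by the preceding lemma.

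First I would observe that the map $u\mapsto K_u^2$ is real-bilinear (hence real analytic) from $H^{1/2}_+$ to the bounded operators on $L^2_+$: indeed, $K_u=H_uS$ and the Nehari-type estimate $\|H_u\|_{\mathcal{L}(L^2_+)}\lesssim \|u\|_{H^{1/2}}$ makes $u\mapsto H_u$ a continuous $\mathbb{C}$-linear map, and then $H_u^2$ (and hence $K_u^2$ by \eqref{huku}) depends bilinearly on $(u,\bar u)$. Composing with the real-analytic curve $t\mapsto u(t)$ in $H^s_+$ given by the previous lemma, the curve $t\mapsto K_{u(t)}^2$ is real analytic with values in $\mathcal{L}(L^2_+)$.

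The crucial point is now that, by Corollary \ref{coro-lax}, the eigenvalues $\sigma_j^2(u(t))$ of $K_{u(t)}^2$ are constant in time. I therefore fix, once and for all, a small circle $\gamma\subset\mathbb{C}$ enclosing $\sigma_k^2$ and leaving all other eigenvalues of $K_u^2$ outside; this same contour remains in the resolvent set of $K_{u(t)}^2$ for every $t\in\mathbb{R}$. The Riesz projector formula then reads
\[
\mathbbm{1}_{\{\sigma_k^2\}}(K_{u(t)}^2)\;=\;\frac{1}{2\pi i}\oint_{\gamma}(zI-K_{u(t)}^2)^{-1}\,dz,
\]
and the standard Neumann-series argument shows that $(z,T)\mapsto (zI-T)^{-1}$ is jointly real analytic on $\{(z,T):z\in\mathbb{C}\setminus\spec T\}$. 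Consequently $t\mapsto (zI-K_{u(t)}^2)^{-1}$ is real analytic uniformly for $z$ on the compact contour $\gamma$, and integration over $\gamma$ preserves real analyticity. This yields that $t\mapsto \mathbbm{1}_{\{\sigma_k^2\}}(K_{u(t)}^2)$ is a real-analytic curve in $\mathcal{L}(L^2_+)$.

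Finally, $[u(t)]_k^K=\mathbbm{1}_{\{\sigma_k^2\}}(K_{u(t)}^2)(u(t))$ is the value of a real-analytic operator-valued function on a real-analytic $L^2$-valued function, hence is itself real analytic in $t$. The only genuinely non-routine ingredient is the time-independence of the contour $\gamma$, and that is guaranteed precisely by the Lax-pair conservation of the $\sigma_j^2$'s; without this, the perturbation of eigenvalues could in principle cross $\gamma$ and destroy analyticity.
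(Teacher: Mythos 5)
Your proof is correct and follows essentially the same route as the paper: the Riesz projector (residue) formula with a fixed contour around $\sigma_k^2$, which is available precisely because the Lax pair makes $\Xi_{u(t)}^K$ time-independent. The paper states this more tersely; your extra care in justifying the real analyticity of $u\mapsto K_u^2$ and of the resolvent is a reasonable fleshing-out of the same argument.
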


\begin{proof}[Proof.]
It suffices to choose $\varepsilon>0$ small enough so that
\[ \left[ \sqrt{\sigma_k^2-\varepsilon},\sqrt{\sigma_k^2+\varepsilon}\right] \cap \Xi_{u(t)}^K=\{\sigma_k\}\]
for all $t\in\mathbb{R}$ (which is possible, since $\Xi_{u(t)}^K$ does not depend on $t$ by the Lax pair). Then, denoting by $\mathcal{C}(\sigma_k^2,\varepsilon)$ the circle of center $\sigma_k^2$ and of radius $\varepsilon$ in $\mathbb{C}$, we have
\[ [u(t)]_k^K=\frac{1}{2i\pi}\int_{\mathcal{C}(\sigma_k^2,\varepsilon)}(zI-K_{u(t)}^2)^{-1}(u(t))dz\]
by the residue formula. This proves the corollary.
\end{proof}

Now we turn to the proof of the main proposition of this section :
\begin{proof}[Proof of Proposition \ref{discrete}]
Assume first that there exists $\sigma_k\in\Xi_k^K$ such that, for some accumulating sequence of times $\{t_n\}$, $\sigma_k$ is an $H$-dominant singular value associated to $u(t_n)$. Then by Proposition \ref{dominance-def}, we then have $[u(t_n)]_k^K=0$ for all $n\in\mathbb{N}$. Therefore, by the real analyticity of this function, it imposes that
\[ u_k^K\equiv 0 \quad \text{on }\mathbb{R},\]
which means that $\sigma_k$ remains $H$-dominant for all times. Besides, by Lemma \ref{formulation-alternative}, we know that, in that case, $\ell_k=-\|w_k\|_{L^2}^2$ is conserved and negative. This proves that $w_k^K\neq 0$ for all $t\in\mathbb{R}$.

Now, recall from \eqref{evol-uk} that the evolution of $u_k^K$ is given by
\[\dot{u}_k^K=B_uu_k^K-iJw_k^K.\]
In our case, this means that $-iJw_k^K$ is identically zero. As $w_k^K\neq 0$, we must have
\[ J\equiv 0,\]
and this is equivalent to $u$ being a steady solution (\emph{i.e.} $\partial_tu=0$).

We therefore have proved that if $t\mapsto u(t)$ is not constant-in-time, the following set $\{t\in\mathbb{R}\mid [u(t)]_k^K=0\}$ is discrete in $\mathbb{R}$. But since our solution belongs to $\mathcal{V}(d)$, the set $\Xi_k^K$ is finite, so the times for which one at least of the $u_k^K$'s, $k\geq 1$, cancels out, lie in a finite union of discrete sets, so they form a discrete subset of $\mathbb{R}$. This proves the proposition.
\end{proof}

\subsection{About the motion of the Blaschke products \texorpdfstring{$\Psi_k^K$}{psi-k-K}}
Now we turn to another evolution law. Recall from Proposition \ref{blaschke-dimension} that if $\sigma_k$ is a $K$-dominant singular value associated to $u$, then there exists $\Psi_k^K$, a Blaschke product of degree $m(\sigma_k)-1$, where $m(\sigma_k)$ is the dimension of $F_u(\sigma_k)$, such that 
\begin{equation}\label{definition-psi-k-k}
K_u(u_k^K)=\sigma_k\Psi_k^K u_k^K.
\end{equation}
The evolution equation for $\Psi_k^K$ plays an important role and can be computed :

\begin{lemme}\label{orbite-bl-local}
Choose $t_0\in\mathbb{R}\setminus \Lambda$ (where $\Lambda$ is given by Proposition \ref{discrete}), and let $I$ be a maximal interval such that $t_0\in I\subseteq (\mathbb{R}\setminus \Lambda)$. Let $\sigma_k\in\Xi^K_u$. Then for all $t\in I$, $\Psi_k^K(t)$ is well defined by \eqref{definition-psi-k-k}, and there exists a smooth function $\psi_{k,I} : I\to\mathbb{\mathbb{T}}$ with $\psi_{k,I}(t_0)=0$, such that
\[ \Psi_k^K(t)=e^{i\psi_{k,I}(t)}\Psi_k^K(t_0), \quad\forall t\in I.\] 
\end{lemme}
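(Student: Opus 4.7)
The plan is to derive an ODE for $\Psi_k^K(t)$ by differentiating the defining relation $K_u(u_k^K)=\sigma_k\Psi_k^K u_k^K$ in time. Since $t_0\notin\Lambda$, Proposition \ref{discrete} ensures that the singular value $\sigma_k$ stays $K$-dominant throughout $I$; hence $u_k^K(t)\neq 0$ on $I$ (so $\Psi_k^K$ is well-defined by \eqref{definition-psi-k-k}) and Lemma \ref{formulation-alternative} provides a scalar $\xi_k\in\mathbb{C}$ with $w_k^K=\xi_k u_k^K$. Using the Lax pair $\dot K_u=[B_u,K_u]$, the evolution equation \eqref{evol-uk}, and the $\mathbb{C}$-antilinearity of $K_u$, a straightforward computation equating the derivatives of the two sides of the defining relation yields
\[\dot\Psi_k^K\,u_k^K \;=\; B_u\bigl(\Psi_k^K u_k^K\bigr) - \Psi_k^K\,B_u(u_k^K) \;+\; 2i\re(J\xi_k)\,\Psi_k^K u_k^K.\]
If the commutator on the right-hand side vanishes, the ODE becomes $\dot\Psi_k^K = 2i\re(J\xi_k)\,\Psi_k^K$, and integration gives the desired conclusion with $\psi_{k,I}(t) := 2\int_{t_0}^t\re(J\xi_k)(s)\,ds$.

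To handle the commutator, I would decompose $B_u = -iT_{\bar Ju} - iT_{J\bar u}$. Since $\bar Ju\in L^\infty_+$, the operator $T_{\bar Ju}$ is merely multiplication by the holomorphic symbol $\bar Ju$ on $L^2_+$, and therefore commutes with multiplication by $\Psi_k^K$. Only $T_{J\bar u}$ produces a nonzero contribution; using the elementary identity $\Pi(\Psi g) = \Psi\Pi g + \Pi\bigl(\Psi(g-\Pi g)\bigr)$ valid for $\Psi\in L^\infty_+$, this reduces to showing that $\Pi(\Psi_k^K\cdot g_-)=0$, where $g_- := (I-\Pi)(\bar u\,u_k^K)$ denotes the strictly-negative-mode part of $\bar u\,u_k^K$.

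Two facts combine to deliver this cancellation. First, applying $S$ to $K_u(u_k^K)=\sigma_k\Psi_k^K u_k^K$ and using $K_u = S^*H_u$ together with $SS^*h = h-(h|1)$, one obtains the explicit spectral formula
\[H_u(u_k^K) \;=\; \sigma_k\,z\,\Psi_k^K u_k^K \;+\; \|u_k^K\|_{L^2}^2;\]
a brief Fourier computation then identifies $g_- = \overline{H_u(u_k^K)-\|u_k^K\|_{L^2}^2} = \sigma_k\,\bar z\,\overline{\Psi_k^K u_k^K}$ on $\mathbb{T}$. Second, since $\Psi_k^K$ is a Blaschke product, $|\Psi_k^K|^2 = 1$ on $\mathbb{T}$, whence
\[\Psi_k^K\cdot g_- \;=\; \sigma_k\,\bar z\,|\Psi_k^K|^2\,\overline{u_k^K} \;=\; \sigma_k\,\bar z\,\overline{u_k^K},\]
which has only strictly negative Fourier modes and is therefore killed by $\Pi$. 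The main obstacle is precisely this cancellation: it is the synergy between the spectral identity for $H_u(u_k^K)$ and the Blaschke identity $|\Psi_k^K|^2\equiv 1$ that reduces the evolution of $\Psi_k^K$ to a pure phase rotation, and thereby conserves its zeros (equivalently, the normalized denominator $D$) along the flow.
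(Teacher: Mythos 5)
Your proof is correct and follows essentially the same route as the paper's: differentiate the defining relation using the Lax pair and \eqref{evol-uk} to reduce to the vanishing of $(B_u\Psi_k^K-\Psi_k^K B_u)u_k^K$, discard the $T_{\bar Ju}$ part as a pure multiplication, and then kill the $T_{\bar u}$-commutator by the Blaschke identity $|\Psi_k^K|^2\equiv 1$ on $\mathbb{T}$. The only difference is cosmetic: the paper writes the negative-mode part directly as $(I-\Pi)(\bar u u_k^K)=\bar z\,\overline{\Pi(\bar z\,u\,\overline{u_k^K})}=\bar z\,\overline{K_u(u_k^K)}$ and then substitutes $K_u(u_k^K)=\sigma_k\Psi_k^K u_k^K$, whereas you first convert the $K_u$-eigenvalue equation into the $H_u$-identity $H_u(u_k^K)=\sigma_k z\Psi_k^K u_k^K+\|u_k^K\|^2$ by applying $S$, and then recover the same $g_-$ by conjugation; both computations are the same two lines in different order.
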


\begin{proof}[Proof.]
The fact that $\Psi_k^K$ is well-defined comes from the fact that for all $t\in I$, $\sigma_k\in \Sigma_{u(t)}^K$. Differentiate \eqref{definition-psi-k-k}, using \eqref{evol-uk} and the Lax pair :
\[ B_uK_u(u_k^K)+i\bar{J}K_u(w_k^K)=
\sigma_k\dot{\Psi}^K_k u_k^K+\sigma_k\Psi^K_k B_uu_k^K-i\sigma_k J\Psi^K_k w_k^K.\]
By \eqref{definition-psi-k-k} again, and the fact that $w_k^K=\|u_k^K\|^{-2}(\Pi(|u|^2)|u_k^K)u_k^K$, we get
\begin{equation}\label{evol-blaschke} 
\dot{\Psi}_k^K u_k^K=(B_u\Psi_k^K-\Psi_k^K B_u)u_k^K +2i\re \left( J \frac{(\Pi (|u|^2)|u_k^K )}{\|u_k^K\|^2}\right)\Psi_k^K u_k^K.
\end{equation}

Our goal is to show that $(B_u\Psi^K_k-\Psi^K_k B_u)u_k^K=0$. This is obvious when $m(\sigma_k)=1$ (because in that case $\Psi_k^K$ is only a complex number), so we assume that $m(\sigma_k)\geq 2$. Since $u\in L^2_+$, it is clear that $T_{\bar Ju}(\Psi_k^K u_k^K)=\bar Ju\Psi_k^Ku_k^K=\Psi_k^K T_{\bar Ju}(u_k^K)$. So it is enough to show that $T_{\bar{u}}(\Psi_k^K u_k^K)-\Psi_k^K T_{\bar{u}}(u_k^K)=0$, and then multiply this identity by $J$. This cancellation follows from a direct computation :
\begin{align*} T_{\bar{u}}(\Psi_k^K u_k^K)-\Psi_k^K T_{\bar{u}}(u_k^K)&=\Pi\left(\Psi_k^K (I-\Pi)(\bar{u}u_k^K)\right)\\
&=\Pi\left(\Psi_k^K\bar{z}\overline{\Pi(\bar{z}u\overline{u_k^K})}\right)\\
&=\Pi(\bar{z}\Psi_k^K\overline{K_u(u_k^K)})\\
&=\sigma_k\, \Pi(\bar{z}|\Psi_k^K|^2\overline{u_k^K})\\
&=\sigma_k\, \Pi(\bar{z}\overline{u_k^K})\\
&=0,
\end{align*}
where we used the elementary fact that for $h\in L^2_+$, we have $(I-\Pi)(h)=\bar{z}\overline{\Pi (\bar{z}\bar{h})}$.

Going back to \eqref{evol-blaschke}, since $u_k^K\neq 0$, we find
\[\dot{\Psi}^K_k=2i\re \left( J \frac{(\Pi (|u|^2)|u_k^K )}{\|u_k^K\|^2}\right)\Psi^K_k=2i\re (J\xi_k)\Psi^K_k,\]
with the notation of \eqref{xik}. This gives the yielded result, with $\psi_{k,I}(t)=2\re \left( \int_{t_0}^t J(s) \xi_k(s)ds\right)$.
\end{proof}

From Lemma \ref{orbite-bl-local}, we deduce an important corollary : the zeros of the Blaschke product associated to some $\sigma_k\in\Sigma_u^K$ remain unchanged from one connected component of $\mathbb{R}\setminus\Lambda$ to another. As a consequence, the Blaschke products associated to $K$-dominant values can be defined for all times.

\begin{cor}\label{orbite-bl}
Fix $t_0\in \mathbb{R}\setminus\Lambda$\footnote{In the sequel, we will assume without loss of generality that $t_0=0$.}. For each $\sigma_k\in\Xi^K_u$, the Blaschke product $\Psi_k^K$ is well-defined by \eqref{definition-psi-k-k} for every $t\in\mathbb{R}\setminus \Lambda$, and there exists a continuous function $\psi_k : \mathbb{R}\to\mathbb{T}$ with $\psi_k(t_0)=0$, such that
\[ \Psi_k^K(t)=e^{i\psi_k(t)}\Psi_k^K(t_0), \quad\forall t\in\mathbb{R}\setminus \Lambda.\] 
\end{cor}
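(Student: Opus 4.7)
The result is a gluing argument across the discrete set $\Lambda$. Lemma~\ref{orbite-bl-local} already provides the required description on each connected component of $\mathbb{R}\setminus\Lambda$, so it remains to match these local descriptions around each crossing time. Fix $\Psi_0:=\Psi_k^K(t_0)$ and, on the component $I_0$ of $\mathbb{R}\setminus\Lambda$ containing $t_0$, set $\psi_k:=\psi_{k,I_0}$. To extend $\psi_k$ continuously through a crossing time $t_*\in\Lambda$, it suffices to show that on the two adjacent components $I^-,I^+$ of $\mathbb{R}\setminus\Lambda$, the Blaschke products $\Psi_k^K(t_{I^-})$ and $\Psi_k^K(t_{I^+})$ have the same zeros in $\mathbb{D}$ and therefore differ only by a phase factor.

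The control of $\Psi_k^K(t)$ near $t_*$ comes from analyticity. By Corollary~\ref{analytic-uk}, $u_k^K:\mathbb{R}\to L^2_+$ is real-analytic, and by the argument in the proof of Proposition~\ref{discrete} it is not identically zero (since $u$ is non-constant). Hence, in a neighbourhood of $t_*$, one can factor $u_k^K(t)=(t-t_*)^n v_0(t)$, where $n\geq 1$ and $v_0$ is a real-analytic $L^2_+$-valued function with $v_0(t_*)\neq 0$. The isometry $\|K_u(u_k^K)(t)\|_{L^2}=\sigma_k\|u_k^K(t)\|_{L^2}$, which follows from $|\Psi_k^K(e^{ix},t)|=1$ a.e., forces $K_u(u_k^K)$ to vanish at $t_*$ to exactly the same order, so $K_u(u_k^K)(t)=(t-t_*)^n v_1(t)$ with $v_1$ real-analytic and $v_1(t_*)\neq 0$. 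Dividing \eqref{definition-psi-k-k} by $(t-t_*)^n$ yields
\[
\sigma_k\,\Psi_k^K(t)\,v_0(t)=v_1(t)\qquad \text{for } t\neq t_*.
\]
At every $z\in\mathbb{D}$ with $v_0(z,t_*)\neq 0$ (a dense open subset of $\mathbb{D}$, since $v_0(\cdot,t_*)$ is a nonzero holomorphic function on $\mathbb{D}$), the quantity $\Psi_k^K(z,t)=v_1(z,t)/(\sigma_k v_0(z,t))$ depends analytically on $t$ in a neighbourhood of $t_*$ and thus has a common two-sided limit there. Since $|\Psi_k^K(z,t)|\leq 1$ on $\mathbb{D}$ by the maximum modulus principle, Vitali's theorem upgrades this to uniform convergence on compact subsets of $\mathbb{D}$ to a single holomorphic limit $\Psi_*$, identical from both sides of $t_*$.

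On each component $I^\pm$, Lemma~\ref{orbite-bl-local} fixes the zeros of $\Psi_k^K(t)$ in $\mathbb{D}$ to be constant families $\{b_j^\pm\}_{j=1}^{m-1}$, where $m=\dim F_u(\sigma_k)$ is conserved by the Lax pair. By Hurwitz's theorem applied to this uniform convergence, the zeros of $\Psi_*$ in $\mathbb{D}$ are precisely the accumulation points of the zeros of $\Psi_k^K(\cdot,t)$ as $t\to t_*$: these are $\{b_j^-\}$ from the left and $\{b_j^+\}$ from the right, so $\{b_j^-\}=\{b_j^+\}$ as multisets. Consequently $\Psi_k^K(t_{I^+})$ and $\Psi_k^K(t_{I^-})$ are Blaschke products of degree $m-1$ with the same zeros, and therefore differ by a global phase $e^{i\theta_*}\in\mathbb{T}$. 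Iterating this matching over the discrete set $\Lambda$ starting from $I_0$ produces the desired continuous function $\psi_k:\mathbb{R}\to\mathbb{T}$ with $\psi_k(t_0)=0$ satisfying $\Psi_k^K(t)=e^{i\psi_k(t)}\Psi_0$ on $\mathbb{R}\setminus\Lambda$. The main subtlety is ensuring that the limit $v_1/(\sigma_k v_0)$ remains a Blaschke product of degree exactly $m-1$ at $t_*$, rather than degenerating through common zeros of $v_0$ and $v_1$ in $\mathbb{D}$; this is precisely what the Hurwitz step guarantees by preserving the $m-1$ locally constant zeros $\{b_j^\pm\}$ in the limit.
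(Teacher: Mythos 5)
Your proof is correct, but it takes a genuinely different route from the paper's. The paper exploits a problem-specific observation: at a crossing time $\tilde t$, the projection $u_k^K$ vanishes but, by Lemma~\ref{formulation-alternative}, $w_k^K=\mathbbm{1}_{\{\sigma_k^2\}}(K_u^2)\Pi(|u|^2)$ does \emph{not} vanish (since $\ell_k<0$ is conserved). So instead of \eqref{definition-psi-k-k} the paper considers $\Psi^\sharp:=K_u(w_k^K)/(\sigma_k w_k^K)$, which is manifestly continuous in $t$ through $\tilde t$ and coincides with $\Psi_k^K$ off $\Lambda$ because $w_k^K$ and $u_k^K$ are colinear there; the gluing is then immediate. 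You instead stick with $u_k^K$ and resolve the $0/0$ indeterminacy by hand: using real-analyticity (Corollary~\ref{analytic-uk}) and the exact isometry $\|K_u(u_k^K)\|=\sigma_k\|u_k^K\|$ to cancel the common factor $(t-t_*)^n$, then a pointwise-on-a-dense-set limit combined with Vitali and Hurwitz to pass to a locally uniform limit of Blaschke products. Both arguments are sound. The paper's is shorter and uses the structural fact that $w_k^K$ is the ``good'' object near a crossing (the very fact driving the positivity $w_k^K\neq 0$ in Lemma~\ref{formulation-alternative}); yours is a more generic normal-families argument that does not require identifying such an auxiliary nonvanishing section, at the price of the Taylor-factorisation and Hurwitz machinery. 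One small remark: the Hurwitz step is not strictly needed, since once you have $\Psi_k^K(\cdot,t)\to\Psi_*$ locally uniformly and $\Psi_k^K(\cdot,t)=e^{i\psi_{k,I^\pm}(t)}\Psi_0^\pm$ on either side, evaluating at a single $z$ with $\Psi_0^-(z)\neq 0$ already shows $e^{i\psi_{k,I^\pm}(t)}$ converges, hence $\Psi_*$ is a unimodular multiple of both $\Psi_0^-$ and $\Psi_0^+$.
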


\begin{proof}[Proof.]
Pick $\sigma_k\in \Xi_u^K$, and assume that there exists a time $\tilde{t}\in\mathbb{R}$ such that $\sigma_k$ is an $H$-dominant singular value associated to $u(\tilde{t})$. Pick also $\varepsilon>0$ such that $[\tilde{t}-\varepsilon,\tilde{t}+\varepsilon]\cap \Lambda =\{\tilde{t}\}$.

Now, we know from Lemma \ref{formulation-alternative} that $w_k^K(\tilde{t})\neq 0$. On the other hand, it can be shown as in Corollary \ref{analytic-uk} that $w_k^K$ is a real analytic function. Up to changing $\varepsilon$, we assume that $w_k^K(t)\neq 0$ if $|t-\tilde{t}|\leq \varepsilon$, and for such $t$, we can define
\[ \Psi^\sharp(t):=\frac{K_{u(t)}(w^K_k(t ))}{\sigma_kw_k^K(t)}.\]
$\Psi^\sharp$ is a continuous function of $t$ on the interval $[\tilde{t}-\varepsilon,\tilde{t}+\varepsilon]$ which takes values into rational functions of $z\in\mathbb{D}$.

Besides, recall that $w_k^K$ is colinear to $u_k^K$ when $\sigma_k\in \Sigma_{u(t)}^K$ (see Lemma \ref{formulation-alternative}). Thus, if $t\in [\tilde{t}-\varepsilon,\tilde{t}+\varepsilon]\setminus\{\tilde{t}\}$, then $\Psi^\sharp(t)$ coincides with $\Psi_k^K(t)$, \emph{i.e.} $e^{i\psi_{k,I_1}(t)}\Psi_1$ on the left of $\tilde{t}$, and $e^{i\psi_{k,I_2}(t)}\Psi_2$ on the right (where $\psi_{k,I_j}:\mathbb{R}\to\mathbb{T}$ are smooth, and $\Psi_j$ are some constant-in-time finite Blaschke products of identical degree, by Lemma \ref{orbite-bl-local}). Therefore, $\Psi^\sharp$ enables to extend continuously each of the two functions, which imposes that $\Psi_1=\Psi_2$ and that the $\psi_{k,I_j}$ coincide with a function which is continuous in $\tilde{t}$.
\end{proof}

\subsection{Weakly convergent sequences in \texorpdfstring{$H^{1/2}_+$}{H1/2}}
Before coming back to equation \eqref{quad}, let us prove three useful preliminary results about weakly convergent sequences in $H^{1/2}_+(\mathbb{T})$.

\begin{lemme}\label{conv-ff}
Let $v_n\in H^{1/2}_+$ such that $\{v_n\}$ converges weakly to some $v$ in $H^{1/2}_+$. Then, for any $h\in L^2_+$, $H_{v_n}(h)\to H_v(h)$ strongly in $L^2_+$.
\end{lemme}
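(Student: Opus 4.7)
The plan is to combine two classical ingredients --- the compact Rellich embedding $H^{1/2}(\mathbb{T}) \hookrightarrow L^2(\mathbb{T})$ and the Hilbert--Schmidt bound on the Hankel operator $H_u$ --- with a standard density argument. Setting $w_n := v_n - v$, so that $w_n \rightharpoonup 0$ in $H^{1/2}_+$, I want to show by linearity that $H_{w_n}(h) \to 0$ in $L^2_+$ for every $h \in L^2_+$. Since weak convergence in $H^{1/2}$ forces $\sup_n \|w_n\|_{H^{1/2}} < \infty$ (Banach--Steinhaus), and the Rellich embedding is compact, I may already assume $w_n \to 0$ strongly in $L^2$.

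The second input is the Hilbert--Schmidt estimate. A direct Plancherel computation on the Hankel matrix with entries $\hat u(\ell+n)$, $\ell,n \geq 0$, yields
\[
\|H_u\|_{\mathrm{HS}}^2 \;=\; \sum_{k \geq 0}(k+1)\,|\hat u(k)|^2 \;\leq\; C\,\|u\|_{H^{1/2}}^2,
\]
so that $\|H_{w_n}\|_{\mathrm{op}} \leq \|H_{w_n}\|_{\mathrm{HS}} \leq M$ uniformly in $n$. This uniform bound will play the role of ``equicontinuity'' in what follows.

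With these two facts in hand, I conclude by an $\varepsilon/3$ argument. On the dense subset $\mathcal{D} \subset L^2_+$ of trigonometric polynomials, convergence is immediate: for $P \in \mathcal{D}$,
\[
\|H_{w_n}(P)\|_{L^2} = \|\Pi(w_n \bar P)\|_{L^2} \leq \|w_n\|_{L^2}\,\|P\|_{L^\infty} \longrightarrow 0
\]
by the Rellich step. For arbitrary $h \in L^2_+$, choose $P \in \mathcal{D}$ with $\|h - P\|_{L^2} < \varepsilon$ and write
\[
\|H_{w_n}(h)\|_{L^2} \;\leq\; \|H_{w_n}\|_{\mathrm{op}}\,\|h - P\|_{L^2} + \|H_{w_n}(P)\|_{L^2} \;\leq\; M\varepsilon + \|H_{w_n}(P)\|_{L^2};
\]
letting $n \to \infty$ and then $\varepsilon \to 0$ gives the desired conclusion. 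There is no genuine obstacle here beyond assembling the two ingredients above; the statement is really the assertion that the linear map $u \mapsto H_u$ sends weak convergence in $H^{1/2}_+$ to strong-operator convergence in $\mathcal{B}(L^2_+)$.
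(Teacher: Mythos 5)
Your proof is correct and follows essentially the same route as the paper: reduce to $v=0$, use Rellich to get strong $L^2$ (or $L^p$) convergence, establish a uniform operator bound, and finish with a density argument. The paper uses $L^4_+$ as the dense subspace and Hölder $L^4\times L^4$, while you use trigonometric polynomials and an $L^2\times L^\infty$ bound, but this is a cosmetic variation, not a different argument.
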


\begin{proof}[Proof.]
Replacing $v_n$ by $v_n-v$, we can assume that $v=0$. By Rellich's theorem, since $v_n\rightharpoonup 0$ in $H^{1/2}(\mathbb{T})$, we have $v_n\to 0$ in every $L^p(\mathbb{T})$, $p<\infty$. Thus, given $h\in L^4_+$ ,
\[ \|H_{v_n}(h)\|_{L^2}\leq \|v_n\bar{h}\|_{L^2}\leq \|v_n\|_{L^4}\|h\|_{L^4}\longrightarrow 0\]
when $n\to +\infty$. Now set $\varepsilon>0$. If $h\in L^2_+$, there exists $\tilde{h}\in L^4_+$ such that $\|h-\tilde{h}\|_{L^2}\leq \varepsilon$. Furthermore, by the principle of uniform boundedness, there exists $C>0$ such that $\|v_n\|_{H^{1/2}}\leq C$ for all $n\geq 0$, hence $\|H_{v_n}\|\leq C$. Then, for $n$ large enough,
\[ \|H_{v_n}(h)\|_{L^2} \leq \|H_{v_n}(h-\tilde{h})\|_{L^2}+\|H_{v_n}(\tilde{h})\|_{L^2}\leq (C+1)\varepsilon,\]
which proves the lemma.
\end{proof}

\begin{lemme}\label{degradation}
Let $d\in \mathbb{N}$. Suppose $v_n\in\mathcal{V}(d)$ and $v_n\rightharpoonup v$ in $H^{1/2}_+$. Then $v\in \mathcal{V}(d')$ for some $d'\leq d$.
\end{lemme}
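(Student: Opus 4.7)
The plan is to combine the strong convergence of Hankel operators at each point (Lemma \ref{conv-ff}) with the lower semicontinuity of the rank. Note first that since $K_{v_n}(h)=H_{v_n}(Sh)$ and Lemma \ref{conv-ff} gives $H_{v_n}(Sh)\to H_v(Sh)=K_v(h)$, we have at once that for every $h\in L^2_+$,
\[H_{v_n}(h)\longrightarrow H_v(h),\qquad K_{v_n}(h)\longrightarrow K_v(h)\quad\text{strongly in }L^2_+.\]

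Next, I would establish the following elementary semicontinuity statement: if $T_n,T$ are bounded operators on $L^2_+$ with $T_n(h)\to T(h)$ for every $h$, then $\rg T\le \liminf_n \rg T_n$. Indeed, suppose $\rg T\ge r+1$; then one can pick $h_1,\dots,h_{r+1}\in L^2_+$ such that $T(h_1),\dots,T(h_{r+1})$ are linearly independent, which means that their Gram matrix $G_{ij}=(T(h_i)\vert T(h_j))$ is invertible. Since $T_n(h_i)\to T(h_i)$ strongly, the Gram matrices $G^{(n)}_{ij}=(T_n(h_i)\vert T_n(h_j))$ converge to $G$, hence $G^{(n)}$ is invertible for large $n$, so the vectors $T_n(h_1),\dots,T_n(h_{r+1})$ are linearly independent and $\rg T_n\ge r+1$.

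Applying this semicontinuity to both $H_{v_n}\to H_v$ and $K_{v_n}\to K_v$, we obtain
\[\rg H_v\le \liminf_n \rg H_{v_n},\qquad \rg K_v\le \liminf_n \rg K_{v_n}.\]
Now by the explicit description of $\mathcal{V}(d)$ recalled in the introduction, on $\mathcal{V}(d)$ both $\rg H_{v_n}$ and $\rg K_{v_n}$ are constant integers whose sum equals $d$ (namely $(N,N)$ when $d=2N$ and $(N+1,N)$ when $d=2N+1$). Summing the two inequalities therefore yields $\rg H_v+\rg K_v\le d$. In particular both ranks are finite, so by the very definition $v\in\mathcal{V}(d')$ with $d':=\rg H_v+\rg K_v\le d$, which concludes the proof.

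There is no serious obstacle here: the only subtle point is to make sure that the bound on the rank really passes to the limit, which is handled by the Gram-matrix argument above. Note that we never need more than pointwise strong convergence of the operators, which is exactly what Lemma \ref{conv-ff} supplies from the weak convergence in $H^{1/2}_+$.
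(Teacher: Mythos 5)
Your proof is correct and takes essentially the same route as the paper: the key step in both is that strong pointwise convergence of operators (from Lemma \ref{conv-ff}) makes the Gram matrix of images converge, forcing lower semicontinuity of the rank, and this is applied to both $H_{v_n}$ and $K_{v_n}$. You phrase the Gram-matrix argument contrapositively (eventual invertibility) rather than via the vanishing determinant, but the content is identical.
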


\begin{proof}[Proof.]
This is in fact a completely general result on sequences of bounded operators $\mathcal T_n$ on some Hilbert space $H$, such that $\sup_n \|\mathcal T_n\|<+\infty$, and $\rg \mathcal{T}_n=k$. Assume that for all $h\in H$, $\mathcal{T}_n(h)\to \mathcal{T}(h)$ strongly. Then $\rg \mathcal{T}\leq k$.

Indeed, for any choice of $k+1$ vectors $h_1,\ldots,h_{k+1}\in H$, the Gram matrix
\[\begin{pmatrix}
(\mathcal{T}_n(h_1)\vert \mathcal{T}_n(h_1)) & (\mathcal{T}_n(h_1)\vert \mathcal{T}_n(h_2)) & \cdots & (\mathcal{T}_n(h_1)\vert \mathcal{T}_n(h_{k+1})) \\
(\mathcal{T}_n(h_2)\vert \mathcal{T}_n(h_1)) & (\mathcal{T}_n(h_2)\vert \mathcal{T}_n(h_2)) & \cdots & (\mathcal{T}_n(h_2)\vert \mathcal{T}_n(h_{k+1})) \\
\vdots & \vdots & \ddots & \vdots \\
(\mathcal{T}_n(h_{k+1})\vert \mathcal{T}_n(h_1))& (\mathcal{T}_n(h_{k+1})\vert \mathcal{T}_n(h_2)) & \cdots & (\mathcal{T}_n(h_{k+1})\vert \mathcal{T}_n(h_{k+1}))
\end{pmatrix}\]
has determinant $0$, since $\mathcal{T}_n(h_1),\ldots,\mathcal{T}_n(h_{k+1})$ are not linearly independent. Passing to the limit $n\to +\infty$ in this determinant shows that $\mathcal{T}(h_1),\ldots,\mathcal{T}(h_{k+1})$ are not independant either, whatever the choice of $h_j$. So $\rg \mathcal{T}\leq k$. Applying this general result both to $H_{v_n}$ and $K_{v_n}$ gives the result.
\end{proof}

We will also need a refinement of Lemma \ref{degradation} in the case of sequences of functions such that the corresponding shifted Hankel operator has constant spectrum.

\begin{lemme}\label{degradation-fort}
Let $v_n\in H^{1/2}_+$ such that $v_n\rightharpoonup v$ in $H^{1/2}_+$. Suppose that $\spec K_{v_n}^2$ does not depend of $n\geq 1$. Then if $\sigma^2\in \spec K_v^2$ with multiplicity $m$, then there exists $N\in\mathbb{N}$ such that $\forall n\geq N$, $\sigma^2\in \spec K_{v_n}^2$ with multiplicity at least $m$.
\end{lemme}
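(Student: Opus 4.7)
The plan is to lift the strong convergence from Lemma \ref{conv-ff} up through $K_{v_n}^2$ and then up to the spectral projections. First, since $K_u = S^\ast H_u$ and $\|S^\ast\| = 1$, Lemma \ref{conv-ff} gives $K_{v_n}(h) \to K_v(h)$ strongly in $L^2_+$ for every $h$. By Banach-Steinhaus, $\sup_n \|v_n\|_{H^{1/2}} < +\infty$, hence $\|K_{v_n}\| \leq C$ uniformly in $n$. Using the antilinearity identity $K_{v_n}(g_n) - K_{v_n}(g) = K_{v_n}(g_n - g)$ applied to $g_n = K_{v_n}(h) \to K_v(h) = g$, I would deduce $K_{v_n}^2(h) \to K_v^2(h)$ strongly as well.

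Next, write $\Sigma := \spec K_{v_n}^2$ (independent of $n$ by hypothesis). I would first check that $\sigma^2 \in \Sigma$. The case $\sigma^2 = 0$ is automatic since $L^2_+$ is infinite-dimensional and each $K_{v_n}^2$ is compact. If $\sigma^2 > 0$ and $\sigma^2 \notin \Sigma$, self-adjointness gives the uniform bound $\|(K_{v_n}^2 - \sigma^2 I)^{-1}\| = 1/\mathrm{dist}(\sigma^2, \Sigma) =: C < +\infty$; picking any unit vector $h \in F_v(\sigma)$, this would yield
\[ 1 = \|h\|_{L^2} \leq C\,\|(K_{v_n}^2 - \sigma^2 I)h\|_{L^2} \xrightarrow[n\to\infty]{} C\,\|(K_v^2 - \sigma^2 I)h\|_{L^2} = 0, \]
a contradiction.

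The core of the argument is then the convergence of spectral projections. Pick $\varepsilon > 0$ smaller than the distance from $\sigma^2$ to the rest of $\Sigma \cup \spec K_v^2$, and define the Riesz projectors
\[ P_n := \frac{1}{2i\pi} \oint_{|z - \sigma^2| = \varepsilon} (zI - K_{v_n}^2)^{-1}\,dz, \qquad P := \frac{1}{2i\pi} \oint_{|z - \sigma^2| = \varepsilon} (zI - K_v^2)^{-1}\,dz, \]
so that $\Ran P = F_v(\sigma)$ and $\Ran P_n = F_{v_n}(\sigma)$. On the contour, self-adjointness gives the uniform estimate $\|(zI - K_{v_n}^2)^{-1}\|,\;\|(zI - K_v^2)^{-1}\| \leq 1/\varepsilon$. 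The resolvent identity
\[ (zI - K_{v_n}^2)^{-1} - (zI - K_v^2)^{-1} = (zI - K_{v_n}^2)^{-1}(K_{v_n}^2 - K_v^2)(zI - K_v^2)^{-1} \]
combined with the strong convergence $K_{v_n}^2 \to K_v^2$ applied to the fixed vector $(zI - K_v^2)^{-1}h$, then yields after a dominated convergence argument in $z$ that $P_n h \to P h$ strongly in $L^2_+$ for every $h$.

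To conclude, let $h_1, \dots, h_m$ be an orthonormal basis of $F_v(\sigma)$. Since $P_n h_j \in F_{v_n}(\sigma)$ and $(P_n h_i \vert P_n h_j)_{L^2} \to \delta_{ij}$, the Gram matrix of $(P_n h_j)_{j=1}^m$ converges to $I_m$ and is therefore invertible for $n$ large enough, so the $P_n h_j$ are linearly independent in $F_{v_n}(\sigma)$, proving $\dim F_{v_n}(\sigma) \geq m$. The main delicate point is the uniform resolvent bound on the contour, which relies crucially on the standing hypothesis that $\Sigma$ does not depend on $n$: without it, $\mathrm{dist}(\sigma^2, \spec K_{v_n}^2 \setminus \{\sigma^2\})$ could shrink to $0$ and the entire perturbation argument would collapse.
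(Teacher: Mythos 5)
Your proof is correct and follows essentially the same approach as the paper's: Riesz projectors over a small contour around $\sigma^2$, strong convergence $K_{v_n}^2(h) \to K_v^2(h)$ derived from Lemma \ref{conv-ff}, and a linear-independence argument on the projected basis vectors. Your up-front verification that $\sigma^2 \in \Sigma$ and the explicit Gram determinant are harmless clarifications of steps the paper leaves implicit (the paper instead lets $\varepsilon\to 0$ to force $\tilde\sigma=\sigma$, and explicitly restricts to $\sigma^2>0$, a case your remark about $\sigma^2=0$ does not fully cover at the level of multiplicity).
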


\begin{proof}[Proof.]
For $\sigma \in\Xi^K_v$, denote by $\pi^\sigma$ the projection onto $\ker (K_v^2-\sigma^2I)$. By the residue theorem, given $\sigma\in \Xi_v^K$ and $0<\varepsilon<\sigma^2$ such that $\sigma^2\pm\varepsilon\notin \spec K_v^2$, we have
\begin{equation}\label{circ}
\frac{1}{2i\pi}\int_{\mathcal{C}(\sigma^2,\varepsilon)}(zI-K_v^2)^{-1}dz= \sum_{\substack{\tilde{\sigma}^2\in\, \Xi^K_v \\ |\tilde{\sigma}^2-\sigma^2|<\varepsilon}}\pi^{\tilde{\sigma}},
\end{equation}
where $\mathcal{C}(\sigma^2,\varepsilon)$ is the circle of center $\sigma^2$ and of radius $\varepsilon$.

If $\sigma^2$ is a non-zero eigenvalue of $K_v^2$, let $\{e_j \mid j=1,\ldots,m\}$ be an orthonormal basis of the corresponding eigenspace, which must be of finite dimension for $K_v^2$ is compact. Let $\varepsilon >0$ be sufficiently small so that $\{z\in\mathbb{C}\mid |z-\sigma^2|\leq \varepsilon \}$ does not contain any other eigenvalue of $K_v^2$, and contains at most one eigenvalue $\tilde{\sigma}^2$ of $K_{v_n}^2$ for all $n\geq 1$. For each $1\leq j\leq m$,
\[e^n_j:=\frac{1}{2i\pi}\int_{\mathcal{C}(\sigma^2,\varepsilon)}(zI-K_{v_n}^2)^{-1}(e_j)dz\]
is well defined, and by Lemma \ref{conv-ff} and formula \eqref{circ}, we have
\[e^n_j\longrightarrow \frac{1}{2i\pi}\int_{\mathcal{C}(\sigma^2,\varepsilon)}(zI-K_{v}^2)^{-1}(e_j)=e_j\]
as $n\to \infty$. Thus, if $n$ is large enough, the $e^n_j$, $j=1,\ldots ,m$ form a family of (non-zero) independant vectors that all belong to $\ker (K_{v_n}^2-\tilde{\sigma}^2I)$. As this is true for any $\varepsilon>0$ small enough, we must have $\tilde{\sigma}=\sigma$. So $\sigma\in\Xi_{v_n}^K$, and the dimension of $\ker (K_{v_n}^2-\sigma^2I)$ is at least $m$ when $n$ is large enough.
\end{proof}

\subsection{An equivalent condition for the growth of Sobolev norms in \texorpdfstring{$\mathcal{V}(d)$}{Vd}}
Let us now fix an integer $d\geq 2$, and let $u$ be a solution of \eqref{quad} in $\mathcal{V}(d)$. Write
\[ u(t,z)=\frac{A(t,z)}{B(t,z)},\quad \forall z\in\mathbb{D},\:\forall t\in\mathbb{R},\]
where $A(t,\cdot)$ and $B(t,\cdot)$ are polynomials whose degree depends on $N:=\lfloor \frac{d}{2}\rfloor$ in the following way : $\deg A\leq N-1$ and $\deg B=N$ when $d$ is even, and $\deg A=N$ and $\deg B\leq N$ when $d$ is odd. Moreover, $A$ and $B$ are relatively prime, with $B(t,0)=1$ and $B$ having no roots inside the closed unit disc of $\mathbb{C}$. With these notations, we have $\rg K_u=N$, and $\rg H_u=d-N$. Write $B(t,z)=\prod_{j=1}^{N} (1-p_j(t)z)$, with $|p_j(t)|<1$ for all $1\leq j\leq N$, $t\in\mathbb{R}$.

Observe that, as a smooth solution of \eqref{quad}, by the conservation of $M$ and $Q$, the function $t\mapsto u(t)$ remains bounded in $H^{1/2}$, so by the Banach-Alaoglu theorem, the following set
\[\mathcal{A}^\infty(u)=\left\lbrace v\in H^{\frac{1}{2}}_+ \;\middle|\; \exists t_n\to \pm \infty \text{ s.t. } u(t_n)\overset{H^{\frac{1}{2}}}{\rightharpoonup} v \right\rbrace \]
is non-empty\footnote{Here, the letter $\mathcal{A}$ stands for the French word \emph{adhérence}, which means ``closure''.}.

We are ready to state our proposition in terms of solutions of \eqref{quad} --- but it can be formulated and proved as well in the general framework of Proposition \ref{CNS} :

\begin{prop}\label{crit-I}
The following statements are equivalent :
\begin{enumerate}
\item $u$ is bounded in $H^{s_0}_+$ for some $s_0>\frac{1}{2}$.
\item $u$ is bounded in \emph{every} $H^s_+$, $s>\frac{1}{2}$.
\item $\mathcal{A}^\infty(u)\subseteq \mathcal{V}(d)$ when $d$ is even, and $\mathcal{A}^\infty(u)\subseteq \mathcal{V}(d)\cup \mathcal{V}(d-1)$ when $d$ is odd.
\end{enumerate}
\end{prop}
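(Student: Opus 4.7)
The plan is to prove the cycle $(ii)\Rightarrow(i)\Rightarrow(iii)\Rightarrow(ii)$, the first implication being obvious. The non-trivial steps rely on three observations: the embedding $H^{s_0}\hookrightarrow H^{1/2}$ is compact for $s_0>\tfrac12$; the Hilbert--Schmidt identity $\|H_w\|^2\leq\tr H_w^2=\sum_{m\geq 0}(m+1)|\hat w(m)|^2\simeq\|w\|_{H^{1/2}}^2$ makes the map $w\mapsto H_w$ (hence $w\mapsto K_w=H_wS$) Lipschitz from $H^{1/2}_+$ into $\mathcal{L}(L^2_+)$; and functions of $\mathcal{V}(d)$ admit an explicit rational form $A/B$ that survives weak limits.

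\textbf{Proof of $(i)\Rightarrow(iii)$.} Let $v\in\mathcal{A}^\infty(u)$ be obtained as a weak $H^{1/2}$-limit of some $u(t_n)$. The $H^{s_0}$-boundedness, combined with the compactness of $H^{s_0}\hookrightarrow H^{1/2}$, upgrades this (along a subsequence) to strong convergence in $H^{1/2}_+$, and hence to operator-norm convergence $K_{u(t_n)}^2\to K_v^2$. Since $\spec K_{u(t_n)}^2$ is independent of $n$ by Corollary \ref{coro-lax}, Kato's perturbation theorem for isolated eigenvalues of compact selfadjoint operators shows that $K_v^2$ has the same non-zero spectrum, with the same multiplicities; in particular $\rg K_v=\rg K_u=N$. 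Together with Lemma \ref{degradation} (which gives $v\in\mathcal{V}(d')$ for some $d'\leq d$) and the constraint $\rg H_v-\rg K_v\in\{0,1\}$ coming from \eqref{huku}, this forces $d'\in\{2N,2N+1\}$, and the conclusion of (iii) follows by intersecting with $d'\leq d$.

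\textbf{Proof of $(iii)\Rightarrow(ii)$.} By contraposition: suppose $\|u(t_n)\|_{H^s}\to\infty$ for some $s>\tfrac12$. Write $u(t_n)=A_n/B_n$ with $B_n(z)=\prod_j(1-p_{j,n}z)$. The bound $\|B_n\|_{L^\infty(\mathbb{T})}\leq 2^N$ together with $\|A_n\|_{L^2}\leq\|u(t_n)\|_{L^2}\|B_n\|_{L^\infty}$ and the conservation of $Q$ keep the coefficients of $A_n$ bounded. After extraction, all coefficients of $A_n$, $B_n$, and all the $p_{j,n}\in\overline{\mathbb{D}}$, converge, and $u(t_n)\rightharpoonup v$ weakly in $H^{1/2}_+$. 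If every $|p_j^\infty|<1$, a partial-fraction bound produces a uniform $H^s$-estimate and contradicts the blow-up; hence some pole $p_{j_0,n}\to p_\infty$ with $|p_\infty|=1$. Extracting further so that the other boundary poles, if any, do not cluster at $1/p_\infty$, I integrate $|u(t_n)|^2$ over an arc of length $\simeq(1-|p_{j_0,n}|)$ around the point $1/p_{j_0,n}$ to obtain
\[
Q(u_0)=\|u(t_n)\|_{L^2}^2\;\gtrsim\;\frac{|A_n(1/p_{j_0,n})|^2}{(1-|p_{j_0,n}|)\,|\widetilde{B}_n(1/p_{j_0,n})|^2},
\]
which forces $A_n(1/p_{j_0,n})\to 0$ and hence $A_\infty(1/p_\infty)=0$. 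Factoring out the common boundary factor $(1-p_\infty z)$ from both $A_\infty$ and $B_\infty$ yields $v=\widetilde{A}_\infty/\widetilde{B}_\infty$ with both degrees reduced by one, so a direct count places $v$ in $\mathcal{V}(d')$ with $d'\leq d-2$. This violates (iii): $d-2<d$ in the even case, and $d-2<d-1$ in the odd case.

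\textbf{Main difficulty.} The crux is the last argument: converting the analytic statement ``blow-up in $H^s$'' into the rigid algebraic statement ``a pole of $u(t_n)$ collides with a zero of its numerator on $\partial\mathbb{D}$, with a simultaneous two-unit drop of the $\mathcal{V}$-degree''. The forward direction (boundary migration of a pole under blow-up) is elementary once one observes that all $H^s$-norms are mutually equivalent on the finite-dimensional spaces of rational functions whose poles are confined to a fixed compact subset of $\mathbb{D}$, uniformly in the parameters. The converse (matching zero and the precise degree drop) rests on the $L^2$ peak estimate displayed above. Coalescence of several poles at the same boundary point is handled by iterating the argument; the resulting degree drop can only be larger than $2$, which strengthens the violation of (iii).
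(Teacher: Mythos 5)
Your cycle $(ii)\Rightarrow(i)\Rightarrow(iii)\Rightarrow(ii)$ is structurally sound, and the $(i)\Rightarrow(iii)$ step is essentially the paper's argument: you upgrade weak $H^{1/2}$ convergence to strong convergence via compact embedding, then deduce $\rg K_v^2=N$ (you invoke Kato, the paper invokes the min-max characterization of eigenvalues -- both work), and combine with Lemma \ref{degradation} and the constraint $\rg H_v-\rg K_v\in\{0,1\}$.

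The $(iii)\Rightarrow(ii)$ step is where you diverge, and where there is a gap. You correctly reduce to showing that a pole migrating to $\partial\mathbb{D}$ forces a degree drop of at least $2$ in the weak limit $v$. But to obtain the cancellation $A_\infty(1/p_\infty)=0$ you set up an $L^2$-peak estimate over a short arc, and this estimate as written assumes the other $p_{j,n}$ stay away from $1/p_\infty$. That is not something one can arrange by passing to a subsequence: the limits $p_j^\infty$ are what they are, and several of them may coincide on $\partial\mathbb{D}$. Your closing sentence defers this to ``iterating the argument,'' but the iteration is not set out, and it is not straightforward: the one-arc estimate yields $A_n(z_0)\to 0$, i.e.\ a simple zero of $A_\infty$ at $1/p_\infty$, whereas a boundary zero of $B_\infty$ of order $m\ge 2$ requires $A_\infty$ to vanish to order $m$ there (otherwise $v=\widetilde A_\infty/\widetilde B_\infty$ still has a pole on the circle and is not of the form required for $\mathcal{V}(d')$). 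Extracting that higher-order vanishing from $L^2$ bounds needs a genuinely more refined argument (derivatives of $A_n$, or nested annular estimates), none of which is given.

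The paper sidesteps all of this with one observation you already have at hand: the weak limit $v$ lies in $H^{1/2}_+\subset L^2_+$, and a rational function whose reduced denominator vanishes on $\partial\mathbb{D}$ is not in $L^2(\mathbb{T})$. Hence \emph{every} boundary factor of $B_\infty$, counted with multiplicity, must divide $A_\infty$, and after cancellation $v\in\mathcal{V}(d-\ell)$ with $\ell\ge 2$; the coalescing case is absorbed automatically. Your peak estimate is a quantitative detour that buys nothing here and opens the gap. Replacing it by the qualitative fact $v\in L^2_+$ closes the gap and recovers the paper's proof.
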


\begin{proof}[Proof.]
Start with an observation. Writing $A(t,z)=\sum_{j=0}^Na_j(t)z^j$, we have by Cauchy-Schwarz
\[ \sum_{j=0}^N|a_j(t)| \leq \sqrt{N}\cdot\|A(t,\cdot)\|_{L^2}\leq \sqrt{N}\|B(t,\cdot)\|_{L^\infty}\|u(t)\|_{L^2}\leq 2^N\sqrt{N}\|u_0\|_{L^2},\]
which proves that all the coefficients of $A$ remain bounded uniformly in time. So if $\{t_n\}$ is a sequence of times with $t_n\to \pm\infty$, we can assume up to an extraction that, for each $z\in\mathbb{D}$,
\[ u(t_n,z)\longrightarrow \frac{\sum_{j=0}^Na_j^\infty z^j}{\prod_{j=1}^N(1-p_j^\infty z)},\]
where $a_j^\infty,p_j^\infty \in\mathbb{C}$ with $|p_j^\infty|\leq 1$. Besides, if $u(t_n,\cdot)\rightharpoonup v\in\mathcal{A}^\infty(u)$, then $\forall k\in\mathbb{N}$, $\hat{u}(t_n,k)\to \hat{v}(k)$, which implies that, for each $z\in\mathbb{D}$, we also have
\[ u(t_n,z)=\sum_{k=0}^\infty \hat{u}(t_n,k)z^k\longrightarrow\sum_{k=0}^\infty \hat{v}(k)z^k=v(z).\]

Now, if \textit{(iii)} is satisfied, then there must be some $\rho<1$ such that $|p_j(t)|\leq \rho$ for all $t\in\mathbb{R}$ and $1\leq j\leq N$, otherwise, choosing an appropriate sequence $\{t_n\}$, one of the $p_j^\infty$ at least would be of modulus $1$ (say $p_1^\infty=e^{i\theta}$). Hence considering $v$ a cluster point of $\{u(t_n)\}$ for the weak $H^{1/2}$ topology, we would have
\[ v(z)= \frac{\sum_{j=0}^Na_j^\infty z^j}{(1-e^{i\theta}z)\prod_{j=2}^N(1-p_j^\infty z)},\]
by the previous remark. But $v\in L^2_+$, so $1-e^{i\theta}z$ would have to divide the numerator. After simplification, we would get $v\in \mathcal{V}(d-\ell)$ with $\ell \geq 2$, and $v\in \mathcal{A}^\infty(u)$, which contradicts \textit{(iii)}. But once we have such a $\rho<1$, it is possible to control the $H^s_+$ norm of $u$.  Indeed, $\|A(t,\cdot)\|_{H^s}\leq (1+N^2)^{s/2}\|A(t,\cdot)\|_{L^2}\leq C(N,s,u_0)$ for all time $t\in\mathbb{R}$. In addition,
\[ \frac{1}{B(t,z)}=\prod_{j=1}^N\left(\sum_{k\geq 0}p_j^kz^k\right)=\sum_{k\geq 0}z^k\left(\sum_{\substack{ (k_1,\ldots,k_N)\in \mathbb{N}^N\\ k_1+\cdots +k_N=k}}p_1^{k_1}p_2^{k_2}\ldots p_N^{k_N}\right),\]
so the coefficient of $z^k$ is controlled by $k^N\rho^k$. This proves that
\[ \left\|\frac{1}{B(t,\cdot )}\right\|_{H^s}\]
is uniformly bounded for any $s>1/2$. Hence \textit{(ii)} is proved.

Let us now prove that \textit{(i)} implies \textit{(iii)}. If $u$ is bounded in some $H^{s_0}$, $s_0>\frac{1}{2}$, then its orbit belongs to a compact set of $H^{1/2}$, for the injection $H^s\hookrightarrow H^{1/2}$ is compact. Therefore, for each $v\in \mathcal{A}^\infty(u)$, there exists a sequence of times $\{t_n\}$ such that $u(t_n)\to v$ strongly in $H^{1/2}_+$. But by the min-max formula, we know the $k$-th eigenvalue of $K_u^2$ depends continuously on $u$ with respect to the $H^{1/2}$ topology, and as it is a conservation law of \eqref{quad}, we get in particular that $N=\rg K_{u(t_n)}^2=\rg K_v^2$. Furthermore, by Lemma \ref{degradation}, we get $\rg H_v^2\leq \liminf_{n\to +\infty} \rg H_{u(t_n)}^2$. Since $\rg H_v^2\geq \rg K_v^2=N$, we have $\rg H_v^2=N$ if $d$ is even, and $\rg H_v^2\in \{N, N+1\}$ if $d$ is odd. This finishes the proof.
\end{proof}

\subsection{Proof of Corollary \ref{annulation}}
Now we translate Proposition \ref{crit-I} into a blow-up criterion for solutions of \eqref{quad} in $\mathcal{V}(d)$ :

\begin{prop}\label{crit-II}
Let $t\mapsto u(t)$ be a solution of \eqref{quad} in $\mathcal{V}(d)$. The following alternative holds :
\begin{itemize}
\item either the trajectory $\{u(t)\mid t\in\mathbb{R}\}$ is bounded in every $H^s$, $s>1/2$.
\item or there exists $\sigma_k\in \Xi^K_u$ and a sequence $t_n$ going to $\pm \infty$ such that $u_k^K(t_n)\neq 0$ for all $n\geq 1$, and
\[ \left\lbrace \begin{aligned} u_k^K(t_n)&\to 0, \\ w_k^K(t_n)&\to 0
\end{aligned}\right. \qquad \text{in }L^2_+.\]
\end{itemize}
\end{prop}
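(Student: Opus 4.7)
The plan is to prove the contrapositive: assume the trajectory of $u$ is not bounded in every $H^s$, $s > 1/2$, and produce $\sigma_k \in \Xi_u^K$ together with a sequence $t_n \to \pm\infty$ satisfying the three required properties. By Proposition \ref{crit-I}, $\mathcal{A}^\infty(u)$ contains some $v$ lying outside $\mathcal{V}(d)$ (resp.\ outside $\mathcal{V}(d)\cup\mathcal{V}(d-1)$) when $d$ is even (resp.\ odd), so I can extract $t_n\to\pm\infty$ with $u(t_n)\rightharpoonup v$ in $H^{1/2}_+$. Using Proposition \ref{discrete} I can further impose $t_n\notin\Lambda$, which ensures that every $\sigma_k\in\Xi_u^K$ stays $K$-dominant along $u(t_n)$ and in particular that $u_k^K(t_n)\neq 0$.

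The first step is to upgrade the weak convergence of $u(t_n)$ to strong convergence of the associated $K^2$ operators on fixed vectors: Rellich's theorem gives $u(t_n)\to v$ strongly in every $L^p$, $p<\infty$; two applications of Lemma \ref{conv-ff} yield $H_{u(t_n)}^2(h)\to H_v^2(h)$ for each fixed $h\in L^2_+$, and \eqref{huku} then gives $K_{u(t_n)}^2(h)\to K_v^2(h)$. For each $\sigma_k\in\Xi_u^K$, choose a small circle $\mathcal{C}(\sigma_k^2,\varepsilon)$ enclosing only $\sigma_k^2$ among the (fixed) spectrum of $K_u^2$. The resolvents $(zI-K_{u(t_n)}^2)^{-1}$ then converge strongly to $(zI-K_v^2)^{-1}$ uniformly on this circle, and the residue formula implies that $P_n^k := \mathbbm{1}_{\{\sigma_k^2\}}(K_{u(t_n)}^2)$ converges strongly to the spectral projector $P_\infty^k$ of $K_v^2$ onto $\ker(K_v^2-\sigma_k^2 I)$. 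Combining with $u(t_n)\to v$ strongly in $L^2$ and $\Pi(|u(t_n)|^2)\to \Pi(|v|^2)$ strongly in $L^2$ (from strong $L^4$-convergence), I obtain
\[ u_k^K(t_n) \to P_\infty^k(v), \qquad w_k^K(t_n) \to P_\infty^k(\Pi(|v|^2)) \qquad \text{in } L^2_+. \]

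The main obstacle, and heart of the argument, is to find a single $\sigma_k$ for which \emph{both} limits vanish. A rank count on $\mathcal{V}(d)$ (using $\rg K\in\{\rg H,\rg H-1\}$) shows $\rg K_v^2<\rg K_u^2=N$, and Lemma \ref{degradation-fort} then provides at least one $\sigma_k\in\Xi_u^K$ whose multiplicity in $\spec K_v^2$ is strictly smaller than in $\spec K_u^2$. I would argue that this drop is necessarily \emph{total}, i.e.\ $\sigma_k^2\notin\spec K_v^2$, via the Blaschke rigidity of Corollary \ref{orbite-bl}: the zeros of $\Psi_k^K(t_n)$ are fixed in time, and passing to the limit in $K_{u(t_n)}(u_k^K(t_n))=\sigma_k\Psi_k^K(t_n)u_k^K(t_n)$ (after extracting a convergent subsequence of the phase $\psi_k(t_n)$) shows that whenever $u_k^K(t_n)$ does not converge to $0$, the limit Blaschke product inherits the full degree $\dim F_{u(t_n)}(\sigma_k)-1$; through Proposition \ref{blaschke-dimension}, this forces $\dim F_v(\sigma_k)=\dim F_{u(t_n)}(\sigma_k)$ and therefore multiplicity preservation. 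Any strict multiplicity drop must then come with $u_k^K(t_n)\to 0$ and moreover rules out the ``partial-drop'' scenario in which $\sigma_k^2$ would survive in $\spec K_v^2$, so $P_\infty^k\equiv 0$. Both $u_k^K(t_n)$ and $w_k^K(t_n)$ therefore tend to $0$ in $L^2_+$, while $u_k^K(t_n)\neq 0$ is built into the choice $t_n\notin\Lambda$, completing the proof.
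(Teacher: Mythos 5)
The first half of your argument --- pass to a weak limit $v$ via Proposition \ref{crit-I}, impose $t_n\notin\Lambda$ so that $u_k^K(t_n)\neq 0$, upgrade to strong convergence of the spectral projections of $K^2_{u(t_n)}$, and identify $u_k^K(t_n)\to P_\infty^k(v)$ and $w_k^K(t_n)\to P_\infty^k(\Pi(|v|^2))$ --- is correct and matches the paper. The gap is in the ``total-drop'' claim. From the limit of $K_{u(t_n)}(u_k^K(t_n))=\sigma_k\Psi_k^K(t_n)u_k^K(t_n)$ you correctly obtain that if $u_k^K(t_n)\not\to 0$ then $\dim F_v(\sigma_k)=\dim F_{u(t_n)}(\sigma_k)$; but when $u_k^K(t_n)\to 0$ that identity degenerates to $0=0$ and yields nothing further. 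Your assertion that a strict multiplicity drop ``moreover rules out the partial-drop scenario \dots so $P_\infty^k\equiv 0$'' therefore does not follow: $P_\infty^k(v)=0$ only says $v\perp F_v(\sigma_k)$, and $F_v(\sigma_k)$ can still be nontrivial with $\sigma_k\in\Sigma_v^H$ (that is, $H$-dominant for $v$), in which case $w_k^K(t_n)\to P_\infty^k(\Pi(|v|^2))\neq 0$ by Lemma \ref{formulation-alternative} and the second bullet of the proposition fails for that $k$.

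This $H$-dominant scenario is exactly what the paper rules out with a second Blaschke argument that you did not run: when $v_k^K=0$ but $y_k^K:=P_\infty^k(\Pi(|v|^2))\neq 0$, Proposition \ref{blaschke-dimension} yields a Blaschke identity for $y_k^K\in F_v(\sigma_k)$, and passing to the limit in the analogous relation $K_{u(t_n)}(w_k^K(t_n))=\sigma_k\Psi_k^K(t_n)w_k^K(t_n)$ forces $\dim F_v(\sigma_k)\geq\dim F_{u(t_n)}(\sigma_k)$. Only then does the rank count $\rg K_v< N$ exclude that $y_k^K\neq 0$ for every $\sigma_k$ with $v_k^K=0$, producing the required index $k$. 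The ``strict drop implies total drop'' by-product you invoke is true, but the paper obtains it precisely by running the Blaschke argument on both $u_k^K$ and $w_k^K$; you only ran the first. (A minor further omission: you only establish one direction of the alternative; the paper also shows that boundedness in some $H^{s_0}$ forbids such a $\sigma_k$, so that the two cases are genuinely exclusive.)
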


\begin{proof}[Proof.]
Suppose that $t\mapsto u(t)$ is \emph{not} bounded in some $H^{s_0}$, $s_0>1/2$. By continuity of the solution in $H^{s_0}$ and by Proposition \ref{discrete}, we can find a sequence $t_n$ such that for all $n\geq 1$, $t_n\in\mathbb{R}\setminus \Lambda$ and $\|u(t_n)\|_{H^{s_0}}\to +\infty$. By Proposition \ref{CNS}, it means that up to passing to a subsequence, we can assume that there exists $v\in H^{1/2}_+$ such that $\rg K_v< \rg K_{u(t)}=N$ and $u(t_n)\rightharpoonup v$ in $H^{1/2}$.

We set $u^n:=u(t_n)$. By Rellich's theorem, we have $u^n\to v$ strongly in $L^2_+$. Let $\sigma_k\in \Xi^K_{u^n}$, and denote by $\pi^n$ (resp. $\pi^\infty$) the orthogonal projection onto $F_{u^n}(\sigma_k)$ (resp. $F_v(\sigma_k)$). With this notation, $(u^n)_k^K=\pi^n(u^n)$ and $v_k^K=\pi^\infty(v)$. Since $K_{u^n}(h)\to K_v(h)$ for any fixed $h\in L^2_+$, adapting formula \eqref{circ}, we also have $\pi^n(h)\to\pi^\infty (h)$. As $\|\pi^n\|\leq 1$, we thus get
\[ \|(u^n)_k^K-v_k^K\|_{L^2}\leq \|\pi^n(u^n)-\pi^n(v)\|_{L^2}+\|(\pi^n-\pi^\infty)(v)\|_{L^2}\leq \|u^n-v\|_{L^2}+\|(\pi^n-\pi^\infty)(v)\|_{L^2},\]
so $(u^n)_k^K\to v_k^K$ strongly in $L^2_+$.

Now, since all the eingevalues of $K_{u^n}^2$ are $K$-dominant by the hypothesis on $t_n$, we can write
\begin{gather*}
K_{u^n}^2\left( (u^n)_k^K\right)=\sigma_k^2(u^n)_k^K , \\
K_{u^n}\left( (u^n)_k^K\right)=\sigma_k\Psi^n\cdot (u^n)_k^K,
\end{gather*}
where $\Psi^n:=\Psi_k^K(t_n)$, and $(u^n)_k^K\neq 0$ for all $n\geq 1$. We would like to pass to the limit in these identities. Since $\|K_{u^n}\|\leq C$, we see that $\|K_{u^n}((u^n)_k^K)-K_v(v_k^K)\|_{L^2}\leq C\|(u^n)_k^K-v_k^K\|_{L^2}+\|K_{u^n}(v_k^K)-K_v(v_k^K)\|_{L^2}$, so $K_{u^n}((u^n)_k^K)\to K_v(v_k^K)$ strongly in $L^2_+$. The same holds replacing $K_{u^n}$ by $K_{u^n}^2$ and $K_v$ by $K_v^2$. Eventually, by Lemma \ref{orbite-bl}, we have $\Psi_k^K(t_n)=e^{i\psi_k(t_n)}\Psi_k^K(0)$. So up to passing to a subsequence, $\Psi^n \to e^{i\psi^\infty}\Psi_k^K(0)$ for some $\psi^\infty\in\mathbb{T}$. Hence, taking $n$ to $\infty$, we get
\begin{gather}
K_{v}^2(v_k^K)=\sigma_k^2v_k^K , \\
K_{v}(v_k^K)=\sigma_k e^{i\psi^\infty}\Psi_k^K(0) v_k^K.\label{dim-egale}
\end{gather}

If now $v_k^K\neq 0$ for every $\sigma_k\in \Xi_{u^n}^K=\Sigma^K_{u^n}$, then the previous equality shows that $\sigma_k$ also belongs to $\Sigma^K_v$, and more precisely, as the dimension of $F_v(\sigma_k)$ is given by the degree of the associated Blaschke product plus $1$, we get from \eqref{dim-egale} that $\dim F_{u^n}(\sigma_k)=\dim F_v(\sigma_k)$. This proves that
\[\rg K_v\geq \sum_{\sigma_k\in \Sigma^K_{u^n}}\dim (F_v(\sigma_k))= \sum_{\sigma_k\in \Sigma^K_{u^n}}\dim (F_{u^n}(\sigma_k))=\rg K_{u^n}=N,\]
since $t_n\notin \Lambda$. This is a contradiction. Consequently, for some $\sigma_k\in \Xi^K_{u^n}$, we must have $[u(t_n)]_k^K\to 0$ in $L^2_+$.

Besides, for such $\sigma_k$'s, we call $(w^n)_k^K$ the projection of $\Pi(|u^n|^2)$ onto $F_{u^n}(\sigma_k)$. We know from Lemma \ref{formulation-alternative} that $(w^n)_k^K$ is colinear to $(u^n)_k^K$. Denote by $y_k^K$ the projection of $\Pi(|v|^2)$ onto $F_v(\sigma_k)$. Since $\Pi(|u^n|^2)\to \Pi(|v|^2)$ strongly in $L^2$, we get that $(w^n)_k^K\to y_k^K$ strongly in $L^2$, by the same argument as above. Then, passing to the limit in the expression of $K_{u^n}((w^n)_k^K)$ and $K_{u^n}^2((w^n)_k^K)$, we get as before
\begin{gather*}
K_{v}^2(y_k^K)=\sigma_k^2y_k^K , \\
K_{v}(y_k^K)=\sigma_k e^{i\psi^\infty}\Psi_k^K(0) y_k^K.
\end{gather*}

Let us show that these equalities impose on $y_k^K$ to be $0$ for at least one $k$. Assume that $y_k^K\neq 0$. Together with $v_k^K$, it means that $\sigma_k\in \Xi_v^K\setminus \Sigma_v^K$, \emph{i.e.} $\sigma_k=\rho_j$ is $H$-dominant. Denote by $m_k$ the dimension of $F_{u^n}(\sigma_k)$ and by $n_j$ the dimension of $E_v(\rho_j)$. By Proposition \ref{blaschke-dimension}, since $y_k^K\in F_v(\rho_j)$, there exists a non-zero polynomial $g\in \mathbb{C}_{n_j-2}[z]$ as well as an polynomial $D(z)$ such that
\[ y_k^K=\frac{g(z)}{D(z)}H_v\left( v_j^H\right) ,\]
and there exists $\varphi\in\mathbb{T}$ such that
\[ K_v\left( y_k^K\right)=\rho_je^{-i\varphi}\frac{\tilde{g}(z)}{D(z)}H_v\left( v_j^H\right),\]
where $\tilde{g}$ is the polynomial of degree at most $n_j-2$ obtained by reversing the order of the coefficients of $g$. Thus, combining all the informations we have,
\[ \frac{K_v\left( y_k^K\right)}{ y_k^K}=\rho_je^{-i\varphi}\frac{\tilde{g}}{g}=\sigma e^{i\psi^\infty}\Psi_k^K(0).\]
Since $\Psi_k^K$ is an irreducible rational function whose numerator and denominator are both of degree $m_k-1$, it means that $n_j-2\geq \deg \tilde{g}=\deg g\geq m_k-1$, hence
\[ n_j-1=\dim F_v(\sigma_k)\geq \dim F_{u^n}(\sigma_k)=m_k.\]
But this cannot happen for all $\sigma_k$'s for which $v_k^K=0$, otherwise we would still have $\rg K_v\geq N$. Therefore, there exists $\sigma_k\in \Xi_u^K$ such that both $v_k^K$ and $y_k^K$ are zero.

\vspace*{0,5em}
Conversely, if $t\mapsto u(t)$ is bounded in some $H^{s_0}$, $s_0>1/2$, we have seen during the proof of Proposition \ref{crit-I} that for any $v\in \mathcal{A}^\infty(u)$, we have $\Xi^K_u=\Xi^K_v$. Pick some $\sigma_k\in \Xi^K_u$. Then either $\sigma_k$ is $K$-dominant for $v$, and then $v_k^K\neq 0$, or $\sigma_k$ is $H$-dominant for $v$, but then $[\Pi(|v|^2)]^K_k\neq 0$ by Lemma \ref{formulation-alternative}. So in both cases, denoting by $t_n$ a sequence of times such that $u^n:=u(t_n)\rightharpoonup v$ in $H^{1/2}$, and defining $w^n:=\Pi(|u^n|^2)$ as above, we cannot have $(u^n)_k^K\to 0$ and $(w^n)_k^K\to 0$ at the same time.
\end{proof}

\begin{rem}
As a by-product of the proof of Proposition \ref{crit-II}, it appears that whenever $u$ is a solution of \eqref{quad} in $\mathcal{V}(d)$, $v\in\mathcal{A}^\infty(u)$, and $\sigma\in \Xi_{u(t)}^K$ with multiplicity $m(\sigma)$,
\begin{itemize}
\item either $\sigma\in \Xi_{v}^K$ with multiplicity $m(\sigma)$,
\item or $\sigma\notin  \Xi_{v}^K$.
\end{itemize}
Indeed, if $\sigma\in \Xi_v^K$, then it is either $H$-dominant or $K$-dominant, so one at least of the vectors $\mathbbm{1}_{\{\sigma^2\}}(K_v^2)\Pi(|v|^2)$ and $\mathbbm{1}_{\{\sigma^2\}}(K_v^2)(v)$ is non-zero. Then, a Blaschke product argument as in proof above shows that $\sigma$ has multiplicity at least $m(\sigma)$. Of course, it cannot be strictly bigger than $m(\sigma)$ (by Lemma \ref{degradation-fort}).
\end{rem}

\vspace*{1em}
Corollary \ref{annulation} is now a mere consequence of Proposition \ref{crit-II}. We restate it for the convenience of the reader :

\begin{cor}[Necessary condition for norm explosion]\label{CN}
Let $t\mapsto u(t)$ be a solution of \eqref{quad} in $\mathcal{V}(d)$, and suppose that it is not bounded in some $H^s$ topology, $s>\frac{1}{2}$. Then there exists $\sigma_k\in\Xi^K_u$ such that
\[ \ell_k(t):=(2Q+\sigma_k^2)\|u_k^K(t)\|_{L^2}^2-\|w_k^K(t)\|_{L^2}^2=0,\quad \forall t\in\mathbb{R}. \]
\end{cor}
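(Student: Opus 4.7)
The plan is to deduce the corollary directly from the conservation of $\ell_k$ (Proposition \ref{conservation-preuve}) together with the blow-up criterion just proved, namely Proposition \ref{crit-II}. Since all the hard work has been done upstream, the argument reduces to a limit computation.

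First I would apply Proposition \ref{crit-II} to the solution $t\mapsto u(t)$ in $\mathcal V(d)$: by assumption, $u$ is unbounded in some $H^{s_0}$ with $s_0>\tfrac12$, so the first alternative in Proposition \ref{crit-II} fails, and we obtain an index $k\geq 1$ with $\sigma_k\in \Xi_u^K$, together with a sequence of times $t_n\to \pm\infty$, such that $u_k^K(t_n)\neq 0$ for every $n$, and
\[ u_k^K(t_n)\longrightarrow 0,\qquad w_k^K(t_n)\longrightarrow 0\quad\text{in } L^2_+.\]

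Next, recall that both $Q$ and the eigenvalues of $K_u^2$ are conservation laws for \eqref{quad} (the former as an invariance of $\mathcal H$, the latter by Corollary \ref{coro-lax}). In particular, $Q$ and $\sigma_k^2$ are constants along the flow, so the factor $(2Q+\sigma_k^2)$ appearing in the definition of $\ell_k$ does not depend on time. Evaluating $\ell_k$ along the extracted sequence therefore gives
\[ \ell_k(u(t_n))=(2Q+\sigma_k^2)\|u_k^K(t_n)\|_{L^2}^2-\|w_k^K(t_n)\|_{L^2}^2\longrightarrow 0.\]

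Finally, Proposition \ref{conservation-preuve} asserts that $t\mapsto \ell_k(u(t))$ is constant. A constant function which converges to $0$ along some sequence is identically $0$, which yields $\ell_k(u(t))=0$ for every $t\in\mathbb R$, as desired. There is no real obstacle to overcome at this stage: the entire difficulty of the corollary is concentrated in the extraction step provided by Proposition \ref{crit-II}, whose proof required the spectral analysis of $K_u$ and $H_u$, Lemma \ref{orbite-bl} on the motion of the Blaschke products, and the refinement of the weak-convergence lemmas in Section \ref{trois}. Once that input is accepted, the passage from ``unboundedness'' to ``vanishing of some $\ell_k$'' is a one-line limit argument.
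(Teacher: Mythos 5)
Your proof is correct and follows exactly the paper's own strategy: invoke Proposition \ref{crit-II} to extract a sequence of times along which both $u_k^K$ and $w_k^K$ tend to $0$, note that $Q$ and $\sigma_k^2$ are time-independent so $\ell_k(u(t_n))\to 0$, and conclude from the conservation of $\ell_k$ (Proposition \ref{conservation-preuve}) that it is identically zero. You merely spell out a few more of the intermediate steps than the paper does.
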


\begin{proof}[Proof.]
The quantity $\ell_k$ is conserved by Proposition \ref{conservation-preuve}, and if $t\mapsto u(t)$ is unbounded in some $H^s$, $s>\frac{1}{2}$, it tends to zero along a sequence of times by Proposition \ref{crit-II}. Thus it is identically zero.
\end{proof}

\begin{rem}
Thanks to Corollary \ref{CN} together with Proposition \ref{crit-II}, if one wants to prove that a rational solution has growing Sobolev norms, it suffices to study the evolution of $[u(t)]_k^K$ if $\ell_k=0$. If it tends to zero along a sequence of times, so does automatically $[w(t)]_k^K$ along the same sequence, and the conditions of Proposition \ref{crit-II} are then fulfilled. The convergence to zero of both $u_k^K$ and $w_k^K$ is what makes this situation very different from of crossing (where only $u_k^K$ goes to zero).
\end{rem}

\vspace*{1em}
\section{The particular case of \texorpdfstring{$\mathcal{V}(4)$}{V4} : 2-soliton turbulence}\label{quatre}

\subsection{A priori analysis}
We begin this section by proving identities that make a link between all the objects we have defined so far :
\begin{lemme}\label{formulesQMJ}
Let $u\in H^{1/2}_+$. Write $\Xi_u^K=\{\sigma_1>\sigma_2>\ldots>\sigma_k>\ldots \}$.
Then
\begin{align*}
M(u)&=\sum_{1\leq k<\infty} \sigma_k^2\cdot \dim F_u(\sigma_k),\\
Q(u)^2&=\sum_{1\leq k\leq \infty} \ell_k,\\
|J(u)|^2&=\sum_{1\leq k\leq \infty} (Q+\sigma_k^2)\ell_k.
\end{align*}
In addition, if $\sigma_k\in \Sigma_u^K$, we have set $\xi_k:=\|u_k^K\|_{L^2}^{-2}(\Pi(|u|^2)\vert u_k^K)$. Then
\[ \overline{J(u)}=\sum_{\sigma_k\in \Sigma_u^K} \xi_k\|u_k^K\|_{L^2}^2.\]
\end{lemme}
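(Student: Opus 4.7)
The plan is to derive each identity by combining the spectral decomposition of $L^2_+$ along the eigenspaces of $K_u^2$ with the fundamental relation \eqref{huku} and the observation that $\Pi(|u|^2)=H_u(u)$. Throughout, set $v:=H_u(u)$, so that $\bar J=(v\vert u)$. The first identity reduces to $M(u)=\tr(K_u^2)$: a direct computation in the orthonormal basis $\{e^{imx}\}_{m\geq 0}$ of $L^2_+$ gives $(H_u^2e^{imx}\vert e^{imx})=\sum_{n\geq m}|\widehat u(n)|^2$, hence $\tr(H_u^2)=Q+M$, and then \eqref{huku} shows that $H_u^2-K_u^2$ is the rank-one operator $h\mapsto(h\vert u)u$ of trace $Q$, whence $\tr(K_u^2)=M$. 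This trace equals $\sum_k\sigma_k^2\dim F_u(\sigma_k)$ by compactness and selfadjointness of $K_u^2$.

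For the identities on $Q^2$ and $\bar J$, I would use the orthogonal decompositions $u=\sum_k u_k^K+u_\infty^K$ and $v=\sum_k w_k^K+w_\infty^K$. They yield at once $\sum_k\|u_k^K\|_{L^2}^2=Q$, $\sum_k\sigma_k^2\|u_k^K\|_{L^2}^2=(K_u^2u\vert u)$ and $\sum_k\|w_k^K\|_{L^2}^2=\|v\|_{L^2}^2=(H_u^2u\vert u)$, while \eqref{huku} gives $(K_u^2u\vert u)=(H_u^2u\vert u)-Q^2$. Unfolding $\ell_k=(2Q+\sigma_k^2)\|u_k^K\|_{L^2}^2-\|w_k^K\|_{L^2}^2$ and summing then produces
\[\sum_k\ell_k=2Q^2+[(H_u^2u\vert u)-Q^2]-(H_u^2u\vert u)=Q^2.\]
For $\bar J$, I would write $\bar J=(v\vert u)=\sum_k(w_k^K\vert u_k^K)$: by Lemma \ref{formulation-alternative}, $w_k^K=\xi_k u_k^K$ when $\sigma_k\in\Sigma_u^K$ (so this term contributes $\xi_k\|u_k^K\|_{L^2}^2$) and $u_k^K=0$ when $\sigma_k\in\Sigma_u^H$; the case $k=\infty$ is handled by the remark following Lemma \ref{formulation-alternative}.

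The main step is the identity $|J|^2=\sum_k(Q+\sigma_k^2)\ell_k$. Splitting the right-hand side as $Q\cdot Q^2+\sum_k\sigma_k^2\ell_k$, it suffices to show $\sum_k\sigma_k^2\ell_k=|J|^2-Q^3$. Using in addition the spectral identity $\sum_k\sigma_k^4\|u_k^K\|_{L^2}^2=\|K_u^2u\|_{L^2}^2$, one expands
\[\sum_k\sigma_k^2\ell_k=2Q(K_u^2u\vert u)+\|K_u^2u\|_{L^2}^2-(K_u^2v\vert v).\]
Three further applications of \eqref{huku} then close the computation: $(K_u^2u\vert u)=(H_u^2u\vert u)-Q^2$; next $\|K_u^2u\|_{L^2}^2=\|H_u^2u-Qu\|_{L^2}^2=(H_u^4u\vert u)-2Q(H_u^2u\vert u)+Q^3$ by selfadjointness of $H_u^2$; and finally $(K_u^2v\vert v)=\|H_uv\|_{L^2}^2-|(v\vert u)|^2=(H_u^4u\vert u)-|J|^2$, since $H_uv=H_u^2u$ and $\bar J=(v\vert u)$. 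The quantities $(H_u^2u\vert u)$ and $(H_u^4u\vert u)$ then cancel pairwise, leaving exactly $|J|^2-Q^3$. The only point that needs genuine care — the main obstacle, such as it is — is the use of the symmetry $(H_ua\vert b)=(H_ub\vert a)$ of the antilinear Hankel operator $H_u$ to obtain $\|H_uh\|^2=(H_u^2h\vert h)$; once this is in place, \eqref{huku} makes the cancellations automatic.
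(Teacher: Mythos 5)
Your proof is correct and follows essentially the same route as the paper: trace computation for $M$, the spectral decomposition of $u$ and $\Pi(|u|^2)=H_u(u)$ along the eigenspaces of $K_u^2$ combined with \eqref{huku} for $Q^2$, $|J|^2$, and $\overline{J}$. The only cosmetic difference is in the third identity, where you expand $\|K_u^2u\|^2=\|H_u^2u-Qu\|^2$ directly while the paper keeps $(K_u^4u\vert u)$ and commutes one $K_u^2$ with $H_u^2$; the intermediate expressions and cancellations are the same.
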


\begin{rem}
The above formulae must take ``infinity'' terms into account, with the convention already mentionned that $\sigma_\infty =0$.
\end{rem}

\begin{proof}[Proof of Lemma \ref{formulesQMJ}.]
The first identity is proved in \cite{GGtori}, but we recall it here. In the canonical basis of $L^2_+$, the matrix of $H_u$ reads
\[ H_u=\begin{pmatrix}
\widehat{u}(0)&\widehat{u}(1)&\widehat{u}(2)&\cdots \\
\widehat{u}(1)&\widehat{u}(2)&\widehat{u}(3)&\cdots \\
\widehat{u}(2)&\widehat{u}(3)&\widehat{u}(4)&\cdots \\
\vdots &\vdots &\vdots &\ddots
\end{pmatrix} ,\]
where $\widehat{u}$ is the Fourier transform of $u$. So taking the $\mathbb{C}$-antilinearity of $H_u$ into account, the trace norm of $H_u^2$ is given by
\[ \tr H_u^2=\sum_{j\geq 0} \sum_{m\geq 0} |\widehat{u}(j+m)|^2=\sum_{n\geq 0} (1+n)|\widehat{u}(n)|^2=Q(u)+M(u).\]
Therefore, by \eqref{huku} (\emph{i.e.} $H_u^2=K_u^2+(\cdot\vert u)u$), we find $\tr K_u^2=\tr H_u^2-\tr((\cdot \vert u)u)=(Q+M)-Q=M$, and the first formula follows, computing $\tr K_u^2$ in an orthonormal basis of eigenvectors.

Secondly, note that $\sigma_k^2\|u_k^K\|_{L^2}=(K_u^2(u_k^K)\vert u_k^K)=(K_u^2(u)\vert u_k^K)$. Decomposing $u$ and $\Pi(|u|^2)$ along all the eigenspaces of $K_u^2$, it yields that
\[\begin{aligned}\sum_{1\leq k\leq\infty}\ell_k&=\sum_{1\leq k\leq\infty}(2Q+\sigma_k^2)\|u_k^K\|_{L^2}^2-\|w_k^K\|_{L^2}^2\\
&=2Q^2 + (K_u^2(u)\vert u)-\|H_u(u)\|_{L^2}^2 \\
&=2Q^2 + (H_u^2(u)-Qu\vert u)-(H_u^2(u)\vert u)\\
&=Q^2,
\end{aligned}\]
where we used the orthogonality of the eigenspaces of $K_u^2$ to sum the squared norms of $u_k^K$ and $w_k^K$.

Then, using extensively \eqref{huku} again,
\[\begin{aligned}\sum_{1\leq k\leq\infty}(Q+\sigma_k^2)\ell_k&=Q^3+\sum_{1\leq k\leq\infty}(2Q+\sigma_k^2)\sigma_k^2\|u_k^K\|_{L^2}^2-\sigma_k^2\|w_k^K\|_{L^2}^2\\
&=Q^3+ 2Q(K_u^2(u)\vert u)+(K_u^4(u)\vert u)-(K_u^2(H_u(u))\vert H_u(u)) \\
&=Q^3+ Q(K_u^2(u)\vert u)+( H_u^2(K_u^2(u))\vert u)-\left[(H_u^3(u)\vert H_u(u))-|J|^2\right]\\
&=Q^3+ Q(K_u^2(u)\vert u)-Q(H_u^2(u)\vert u) +|J|^2\\
&=|J|^2.
\end{aligned}\]
It remains to prove the alternative expression of $J(u)$. Since $w_k^K$ is colinear to $u_k^K$ with $w_k^K=\xi_ku_k^K$ (when this last projection is not zero), and because of the decomposition \eqref{decomp-u},
\[\overline{J(u)} =(\Pi(|u|^2)\vert u)=\sum_{\sigma_k\in\Sigma_u^K}(\Pi(|u|^2)\vert u_k^K)=\sum_{\sigma_k\in\Sigma_u^K} (w_k^K\vert u_k^K)=\sum_{\sigma_k\in\Sigma_u^K}\xi_k\|u_k^K\|_{L^2}^2,\]
as announced in Lemma \ref{formulesQMJ}.
\end{proof}

Let us now make a few considerations on $\mathcal{V}(4)$. On $\mathcal{V}(4)$, we have $\rg K_u^2=2$ and $u\perp \ker K_u^2$ (since $\Ran H_u^2=\Ran K_u^2$).
\begin{cor}\label{expression-energie}
Let $u\in \mathcal{V}(4)\setminus\{0\}$. There exists $\sigma_k\in \Xi_u^K$ such that $\ell_k(u)=0$ if and only if $\Xi_u^K$ has two distinct elements $\sigma_1>\sigma_2$, and
\[ |J|^2=Q^2(Q+\sigma^2_k),\]
for one $k\in \{1,2\}$.
\end{cor}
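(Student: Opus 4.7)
The statement follows from the sum formulas of Lemma \ref{formulesQMJ} once they are specialized to $\mathcal{V}(4)$, together with a short dominance argument ruling out the singleton spectrum.

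First, I would simplify the identities. On $\mathcal{V}(4)$ one has $\Ran H_u^2 = \Ran K_u^2$, and since $u = H_u(1)$ belongs to $\Ran H_u^2$, we have $u \perp \ker K_u^2$, hence $u_\infty^K = 0$. By the remark following Lemma \ref{formulation-alternative}, this also yields $w_\infty^K = 0$, so $\ell_\infty = 0$. The formulas of Lemma \ref{formulesQMJ} therefore read
\[ Q^2 = \sum_{\sigma_k \in \Xi_u^K} \ell_k, \qquad |J|^2 = \sum_{\sigma_k \in \Xi_u^K} (Q + \sigma_k^2)\, \ell_k. \]

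Next, I would dispose of the singleton case. If $\Xi_u^K$ consists of a single value $\sigma_1$, then $\rg K_u^2 = 2$ forces $\dim F_u(\sigma_1) = 2$; but $\rg H_u^2 = 2$ prevents the $H$-dominant alternative of Proposition \ref{dominance-def} (which would demand $\dim E_u(\sigma_1) = 3$), so $\sigma_1$ is $K$-dominant. The first formula above then collapses to $\ell_1 = Q^2 > 0$, since $u \neq 0$. Hence the existence of some $\sigma_k \in \Xi_u^K$ with $\ell_k(u) = 0$ forces $|\Xi_u^K| = 2$.

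Finally, for $\Xi_u^K = \{\sigma_1 > \sigma_2\}$, subtracting $Q$ times the first identity from the second gives the Vandermonde-type system
\[ \ell_1 + \ell_2 = Q^2, \qquad \sigma_1^2 \ell_1 + \sigma_2^2 \ell_2 = |J|^2 - Q^3. \]
Since $\sigma_1 \neq \sigma_2$, this system is invertible, and an elementary computation shows that $\ell_1 = 0$ is equivalent to $|J|^2 = Q^2(Q + \sigma_2^2)$, while $\ell_2 = 0$ is equivalent to $|J|^2 = Q^2(Q + \sigma_1^2)$. Combining these equivalences with the previous step yields both directions of the stated ``iff''. The only delicate point along the way is the dominance bookkeeping that rules out the singleton case; every other step is routine linear algebra applied to the conservation-law identities of Lemma \ref{formulesQMJ}.
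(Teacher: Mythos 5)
Your proposal is correct and follows essentially the same route as the paper: specialize the identities of Lemma \ref{formulesQMJ} to $\mathcal{V}(4)$ (using $u\perp\ker K_u^2$ to kill the $\ell_\infty$ term), rule out the singleton spectrum via $\ell_1=Q^2>0$, and then solve the resulting $2\times 2$ linear system in $\ell_1,\ell_2$. One remark: the dominance bookkeeping you insert in the singleton case (showing $\sigma_1$ is $K$-dominant) is correct but superfluous, since the identity $Q^2=\sum_k\ell_k$ already forces $\ell_1=Q^2\neq 0$ regardless of whether $\sigma_1\in\Sigma_u^K$ or $\Sigma_u^H$; the paper invokes Lemma \ref{formulesQMJ} directly without that detour.
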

\begin{proof}[Proof.]
Suppose that $K_u^2$ has a unique eigenvalue $\sigma_1^2$, and $\ell_1=Q^2\neq 0$ by Lemma \ref{formulesQMJ}. So for one of the $\ell_k$ to cancel out, $K_u^2$ must have two distinct eigenvalues $\sigma_1^2>\sigma_2^2$. In that case, we have
\[ \left\lbrace\begin{aligned}
&\ell_1+\ell_2=Q^2, \\
&\ell_1(Q+\sigma_1^2)+\ell_2(Q+\sigma_2^2)=|J|^2.
\end{aligned}\right.\]
This system can be solved, and we find
\[ \left\lbrace\begin{aligned}
\ell_1&=\frac{|J|^2-Q^2(Q+\sigma_2^2)}{\sigma_1^2-\sigma_2^2}, \\
\ell_2&=\frac{Q^2(Q+\sigma_1^2)-|J|^2}{\sigma_1^2-\sigma_2^2},
\end{aligned}\right.\]
which proves the corollary.
\end{proof}

\begin{rem}
Suppose that for some solution $t\mapsto u(t)$ in $\mathcal{V}(4)$, $u(t_n)$ is not bounded in $H^s$ for some $s>\frac{1}{2}$ and some sequence of times $t_n$. Then by Proposition \ref{crit-I}, there exists $v\in \mathcal{V}(d)$, $d\leq 2$, such that $u(t_n)\rightharpoonup v$ in $H^{1/2}$ up to extraction. In fact, since $J(u(t_n))= J(v)$ and $Q(u(t_n))= Q(v)$ by Rellich's theorem, we cannot have $v\in \mathcal{V}(d)$ for $d\leq 1$, otherwise we would have $|J(u(t_n))|^2=Q(u(t_n))^3$, which is not the case by the preceding corollary. Therefore,
\[ v(z)=\frac{\alpha_\infty}{1-p_\infty z},\]
with $\alpha_\infty, p_\infty\in\mathbb{C}$, $0<|p_\infty|<1$. It means that one of the two poles of $u(t_n)$ goes to $\mathbb{T}$, whereas the other stays away from $\mathbb{T}$ and from infinity.
\end{rem}

\subsection{Growing Sobolev norms in \texorpdfstring{$\mathcal{V}(4)$}{V(4)}}
The purpose of this paragraph is to prove the first part of Theorem \ref{turbu-V4} : solutions in $\mathcal{V}(4)$ have growing Sobolev norms if and only $\ell_1=0$.

Throughout we fix $u_0\in \mathcal{V}(4)$ such that $\Xi_{u_0}^K=\{\sigma_1>\sigma_2\}$, and
\[(\ell_1(u_0),\ell_2(u_0))\in \{(0,Q(u_0)^2),(Q(u_0)^2,0)\}.\]
We denote by $u(t)$ the solution of \eqref{quad} such that $u(0)=u_0$. Begin with an obvious consequence of the previous results :
\begin{lemme}
For all $t\in\mathbb{R}$, $\sigma_1$ and $\sigma_2$ are $K$-dominant.
\end{lemme}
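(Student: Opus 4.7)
The plan is to combine three earlier facts: the conservation of the $\ell_k$'s (Proposition~\ref{conservation-preuve}), the conservation of the eigenvalues of $K_u^2$ (Corollary~\ref{coro-lax}), and the sign information from Lemma~\ref{formulation-alternative}, which says that $H$-dominance forces $\ell_k < 0$ strictly.

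First, since $\sigma_1^2 > \sigma_2^2$ are the two eigenvalues of $K_{u_0}^2$ and these eigenvalues do not evolve in time, $\Xi_{u(t)}^K = \{\sigma_1, \sigma_2\}$ for every $t \in \mathbb{R}$. Similarly, by Proposition~\ref{conservation-preuve}, $\ell_1(u(t)) = \ell_1(u_0)$ and $\ell_2(u(t)) = \ell_2(u_0)$ for all $t$, so under our hypothesis we always have $\ell_1(u(t)) \geq 0$ and $\ell_2(u(t)) \geq 0$.

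Fix $t \in \mathbb{R}$ and $k \in \{1,2\}$. By Proposition~\ref{dominance-def}, $\sigma_k$ is either $H$-dominant or $K$-dominant for $u(t)$. If it were $H$-dominant, then Lemma~\ref{formulation-alternative}(ii) would give $[u(t)]_k^K = 0$ and $[w(t)]_k^K \neq 0$, so $\ell_k(u(t)) = -\|[w(t)]_k^K\|_{L^2}^2 < 0$, contradicting $\ell_k(u(t)) \geq 0$. Therefore $\sigma_k \in \Sigma_{u(t)}^K$, which proves the lemma. There is no real obstacle here: the content of the argument is that the strict negativity in Lemma~\ref{formulation-alternative}(ii) together with the conservation of the $\ell_k$'s rules out any crossing along the flow in this particular configuration on $\mathcal{V}(4)$.
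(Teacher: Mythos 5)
Your proof is correct and follows essentially the same line as the paper's, which also invokes Lemma~\ref{formulation-alternative} to conclude that $H$-dominance of a $\sigma_k$ would force $\ell_k<0$, contradicting the standing hypothesis $\ell_1,\ell_2\geq 0$; you have simply spelled out the role of the conservation laws (Proposition~\ref{conservation-preuve} and Corollary~\ref{coro-lax}) that the paper leaves implicit.
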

\begin{proof}[Proof.]
By Proposition \ref{formulation-alternative}, if there was a phenomenon of crossing at some time $t$, we would have $\ell_1<0$ or $\ell_2<0$, which is excluded by our hypothesis.
\end{proof}

Thus we call $\rho_1^2(t)$, $\rho_2^2(t)$ the simple eigenvalues of $H_{u(t)}^2$, satisfying $\forall t\in\mathbb{R}$,
\[ \rho_1(t)>\sigma_1>\rho_2(t)>\sigma_2>0.\]
We can also define $\xi_1$ and $\xi_2$ as in \eqref{xik} for all times, and we denote by $u_1:=[u(t)]_1^K$, $u_2:=[u(t)]_2^K$ with an implicit time-dependence. In particular, we have by Lemma \ref{formulesQMJ} and Proposition \ref{formulation-alternative} :
\begin{gather}
\bar{J}=\xi_1\|u_1\|_{L^2}^2+\xi_2\|u_2\|_{L^2}^2,\label{J-V4} \\
\ell_1= \|u_1\|_{L^2}^2(2Q+\sigma_1^2-|\xi_1|^2),\label{u1-V4} \\
\ell_2= \|u_2\|_{L^2}^2(2Q+\sigma_2^2-|\xi_2|^2).\label{u2-V4}
\end{gather}

The main lemma of this paragraph is the following :
\begin{lemme}\label{u1-to-0}
\begin{itemize}
\item Suppose that $\ell_1(u_0)=0$. Then $\|u_1\|_{L^2}^2$ goes exponentially fast to zero in both time directions.
\item Suppose that $\ell_2(u_0)=0$. Then there exists a constant $C>0$ such that
\[ \|u_2\|_{L^2}^2\geq C >0,\]
uniformly in time.
\end{itemize}
\end{lemme}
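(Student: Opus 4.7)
My plan is to reduce the evolution of $a(t) := \|u_1^K(t)\|_{L^2}^2$ and $b(t) := \|u_2^K(t)\|_{L^2}^2$ to a single autonomous scalar ODE and analyse it qualitatively. Because on $\mathcal{V}(4)$ the ranks $\rg H_u = \rg K_u = 2$ coincide, the observation following \eqref{rem-noyau} ensures $u_\infty^K = 0$, so $u = u_1^K + u_2^K$ and $a + b = Q$ is conserved throughout the flow. Combining \eqref{evol-uk}, the colinearity $w_k^K = \xi_k u_k^K$ (Lemma \ref{formulation-alternative}), the antiselfadjointness of $B_u$, and the identity $\bar J = \xi_1 a + \xi_2 b$ from Lemma \ref{formulesQMJ}, a short computation yields $\dot a = -2abY$ and $\dot b = +2abY$, where $Y := \im(\bar\xi_1\xi_2)$.

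Next I would express $Y^2$ as a function of $a$ alone. Relations \eqref{u1-V4}--\eqref{u2-V4} give $|\xi_1|^2$ and $|\xi_2|^2$ as explicit functions of $a$ (via $b=Q-a$ and the constants $\ell_1, \ell_2, \sigma_1^2, \sigma_2^2$), while the identity $|J|^2 = a^2|\xi_1|^2 + b^2|\xi_2|^2 + 2ab\,\re(\bar\xi_1\xi_2)$ determines $X := \re(\bar\xi_1\xi_2)$. Setting $P(a) := |\xi_1|^2|\xi_2|^2 - X^2$, one has $Y^2 = P(a)$, and the dynamics reduces to $\dot a = \mp 2a(Q-a)\sqrt{P(a)}$, with the sign of $Y$ remaining constant on each interval where $P > 0$.

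If $\ell_1(u_0) = 0$, then $|\xi_1|^2 = 2Q+\sigma_1^2$, $\ell_2 = Q^2$ and $|J|^2 = Q^2(Q+\sigma_2^2)$, and I expect the algebra above to collapse into the polynomial identity
\[
4(Q-a)^2 P(a) \;=\; Q^2\bigl(4\Delta(Q+\sigma_2^2) - Q^2\bigr) \;-\; 2Q\Delta(3Q+2\sigma_2^2)\,a \;-\; \Delta^2 a^2,
\]
with $\Delta := \sigma_1^2 - \sigma_2^2 > 0$. The right-hand side is a concave parabola in $a$ whose apex sits at a negative abscissa, hence it is strictly decreasing on $[0,+\infty)$. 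Because $\sigma_1, \sigma_2 \in \Sigma_{u(t)}^K$ for all $t$ by the previous lemma, one has $a(t) > 0$ and $P(a(t)) \geq 0$, and strict monotonicity then forces $P(0) > 0$, i.e.\ $4\Delta(Q+\sigma_2^2) > Q^2$. The parabola has a unique positive root $a_+$, which is a turning point of the motion; the orbit is a soliton-type bell curve with maximum $a_+$ reached at a single time, and $a(t)\to 0$ as $t\to\pm\infty$. Linearization at the fixed point $a = 0$ produces $\dot a \simeq -2Q\sqrt{P(0)}\,a$, which delivers the exponential decay in both time directions.

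If instead $\ell_2(u_0) = 0$, the symmetric computation at the opposite endpoint gives $P\big|_{b=0} = -\Delta(Q+\sigma_1^2) - Q^2/4$, which is \emph{always} strictly negative regardless of the values of $\sigma_1,\sigma_2,Q$. Thus $b = 0$ is unreachable by the trajectory, and by continuity $b(t) \geq b_\ast > 0$ uniformly in $t$, where $b_\ast$ denotes the smallest positive root of $P$. The main obstacle is the explicit algebraic reduction producing the polynomial form above for $4(Q-a)^2 P(a)$: once that identity is established, the downward-parabola structure simultaneously delivers the strict positivity $P(0) > 0$ (ruling out a merely algebraic rather than exponential decay) and the sharp asymptotic rate via linearization.
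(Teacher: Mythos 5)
Your proposal is essentially the paper's own proof: the reduction $a+b=Q$, the formula $\dot a=-2ab\,\im(\bar\xi_1\xi_2)$ obtained from \eqref{evol-u1} and $\bar J=\xi_1 a+\xi_2 b$, the substitution of $|\xi_1|^2$, $|\xi_2|^2(Q-a)$, $|J|^2$ under $\ell_1=0$, the resulting downward-opening parabola (your polynomial identity for $4(Q-a)^2P(a)$ agrees, after the substitution $X=\Delta a/Q$, with the paper's $P$ in \eqref{polynome}), the deduction that $P(0)>0$, the bell-curve picture with a single positive turning point, and the exponential rate $Q\sqrt{4\Delta(Q+\sigma_2^2)-Q^2}$, as well as the $\ell_2=0$ case via $P|_{b=0}<0$. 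The only genuine deviation is at the very end: the paper solves the ODE $(\dot y)^2=A^2y^2(y+a)(b-y)$ explicitly via $\cosh$, giving the exponential asymptotics directly, whereas you argue qualitatively by linearizing $\dot a\simeq -2Q\sqrt{P(0)}\,a$ near $a=0$; both are valid, but if you take the linearization route you should note that you first need the phase-portrait argument to guarantee $a(t)\to 0$ in both time directions before the linearization can be invoked (the explicit solution furnishes this for free), and that the turning point at $a_+$ is reached in finite time because the vector field behaves like $\sqrt{a_+-a}$ there. A small notational slip in the $\ell_2=0$ case: $P$ is a function of $a$, so ``the smallest positive root of $P$'' is not literally $b_\ast$; what you mean is that $a(t)$ stays away from $Q$ since $P(Q)<0$, and hence $b=Q-a$ stays bounded below.
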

\begin{proof}[Proof.]
Let us denote by $x:=\|u_1\|_{L^2}^2$. We have $\|u_2\|_{L^2}^2=Q-x$. Recall from \eqref{evol-u1} that $\dot{x}=2x\im (J\xi_1)$. Using \eqref{J-V4}, we then have
\[ \dot{x}=2x(Q-x)\im (\xi_1\overline{\xi_2}).\]
Moreover, \eqref{J-V4} shows that $|J|^2=|\xi_1|^2x^2+|\xi_2|^2(Q-x)^2+2x(Q-x)\re (\xi_1\overline{\xi_2})$. Therefore, we get
\begin{equation}\label{v4-general-avant-hypothese}
(\dot{x})^2=4x^2(Q-x)^2|\xi_1|^2|\xi_2|^2-\left( |J|^2-|\xi_1|^2x^2-|\xi_2|^2(Q-x)^2\right)^2.
\end{equation}

Suppose now that $\ell_1=0$. Corollary \ref{expression-energie} says that then $|J|^2=Q^2(Q+\sigma_2^2)$ and $\ell_2=Q^2$. Then by \eqref{u1-V4} and \eqref{u2-V4}, we have
\begin{gather*}
|\xi_1|^2=2Q+\sigma_1^2\\
|\xi_2|^2(Q-x) = (2Q+\sigma_2^2)(Q-x)-Q^2
\end{gather*}
Coming back to \eqref{v4-general-avant-hypothese}, this gives
\begin{multline*}
(\dot{x})^2=4x^2(Q-x)(2Q+\sigma_1^2)((2Q+\sigma_2^2)(Q-x)-Q^2)\\
-\left( Q^2(Q+\sigma_2^2)-(2Q+\sigma_1^2)x^2-(Q-x)((2Q+\sigma_2^2)(Q-x)-Q^2)\right)^2.
\end{multline*}
Since $(2Q+\sigma_2^2)(Q-x)-Q^2=(Q+\sigma_2^2)Q-x(2Q+\sigma_2^2)$, we find a simplification in the large squared parenthesis, and we get
\begin{equation*}
\left(\frac{\dot{x}}{x}\right)^2=4(2Q+\sigma_1^2)(Q-x)((Q+\sigma_2^2)Q-x(2Q+\sigma_2^2))-\left( Q(3Q+2\sigma_2^2)-x(4Q+\sigma_1^2+\sigma_2^2)\right)^2.
\end{equation*}
If we now develop the different terms crudely, we end up at
\begin{equation}\label{eq-finale-v4}
\left(\frac{\dot{x}}{x}\right)^2=Q^2P\left( \frac{\sigma_1^2-\sigma_2^2}{Q}x\right),
\end{equation}
where
\begin{equation}\label{polynome} 
P(X)=\left[4(Q+\sigma_2^2)(\sigma_1^2-\sigma_2^2)-Q^2\right] -2X(3Q+2\sigma_2^2) -X^2.
\end{equation}

We thus find an equation which is of the same type of the one on $\mathcal{V}(3)$ while analysing the case $|J|^2=Q^3$ (see \cite{thirouin2}). The analysis here goes the same. We see that $P(X)\to -\infty$ as $X\to \pm\infty$, and equation \eqref{eq-finale-v4} implies that $P$ also takes at least one nonnegative value on $(0,+\infty)$ (because $x(t)>0$ for all $t\in\mathbb{R}$). So $P$ is real-rooted, and its roots $\lambda_1$, $\lambda_2$ cannot be both non-positive. Furthermore, they satisfy
\[ \lambda_1+\lambda_2=-2(3Q+2\sigma_2^2)<0.\]
This equation implies that $\lambda_1$ and $\lambda_2$ cannot be both non-negative either. Hence they must have different signs : one of them is strictly negative, and the other must be strictly positive. In particular,
\[ P(0)=4(Q+\sigma_2^2)(\sigma_1^2-\sigma_2^2)-Q^2> 0.\]
Setting $y:=(\sigma_1^2-\sigma_2^2)Q^{-1}x$, we can then write equation \eqref{eq-finale-v4} in the form :
\[ (\dot{y})^2=A^2y^2(y+a)(b-y)\]
for some constants $A$, $a$, $b>0$.

This equation can be solved explicitely : there exists $t_0\in\mathbb{R}$ such that for all $t\in\mathbb{R}$, we have
\[ y(t)=\frac{2ab}{(a-b)+(a+b)\cosh (\tau (t-t_0))},\quad \tau:=A\sqrt{ab}.\]
We see on this formula that $y$ (hence $x=\|u_1\|_{L^2}^2$) decreases exponentially fast in both time directions. The rate is given by
\[ \tau=Q\sqrt{|\lambda_1\lambda_2|}=Q\sqrt{4(Q+\sigma_2^2)(\sigma_1^2-\sigma_2^2)-Q^2}.\]

It remains to treat the case when $\ell_2=0$. Taking the computation back from the beginning, we see that the equation on $x:=\|u_2\|_{L^2}^2$ is given by \eqref{eq-finale-v4}, changing $x$ into $-x$ and exchanging the indices $1$ and $2$. Thus $\dot{x}$ satisfies the same equation as \eqref{eq-finale-v4}, but the polynomial is now $\tilde{P}$ and can be deduced from \eqref{polynome} :
\[\tilde{P}(X)= \left[-4(Q+\sigma_1^2)(\sigma_1^2-\sigma_2^2)-Q^2\right] -2X(3Q+2\sigma_1^2) -X^2.\]
Yet it can been seen directly now that $\tilde{P}(0)<0$, so it imposes that $x=\|u_2\|_{L^2}^2$ remains bounded away from $0$.
\end{proof}

At this point, Lemma \ref{u1-to-0} shows that when $\ell_1(u_0)=0$ in $\mathcal{V}(4)$, the the corresponding solution satisfies
\begin{equation}\label{lim-infinie}
\forall s>\frac{1}{2},\qquad \lim_{t\to \pm\infty} \|u(t)\|_{H^s}=+\infty.
\end{equation}
Indeed, the existence of a sequence $t_n$ such that $\|u(t_n)\|_{H^{\underline{s}}}\leq C<+\infty$ for some $\underline{s}>\frac{1}{2}$ would imply that $u_1$ would not go to zero along this sequence $t_n$, which would contradict the result of Lemma \ref{u1-to-0}.

\subsection{Determination of the rate of growth}
To prove Theorem \ref{turbu-V4}, it remains to show that the growth of Sobolev norms is exponential in time in the case when $\ell_1=0$. This can be seen through the inverse formula of Theorem \ref{formule-inverse}, which will enable us to prove the following result :

\begin{lemme}\label{mouvement-pole}
Let $u_0\in \mathcal{V}(4)$ such that $\ell_1(u_0)=0$. Let $t\mapsto u(t)$ be the corresponding solution of \eqref{quad}. Then there exists a time $t_0>0$ such that for all $t\in\mathbb{R}$ with $|t|\geq t_0$, $u(t)$ has two distinct poles, one of which comes close to the unit circle $\partial\mathbb{D}\subset \mathbb{C}$ exponentially fast in time.
\end{lemme}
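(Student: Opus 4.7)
The plan is to reconstruct $u(t,z)$ from its spectral data via Theorem~\ref{formule-inverse} and to read off the motion of its two poles from the exponential decay established in Lemma~\ref{u1-to-0}. The assumption $\ell_1(u_0)=0$ propagates in time (Proposition~\ref{conservation-preuve}); combined with Lemma~\ref{formulation-alternative}, it rules out crossing, so $\Xi_{u(t)}^K=\Sigma_{u(t)}^K=\{\sigma_1,\sigma_2\}$ throughout the evolution. By the interlacement \eqref{interlacement} and the rank conditions $\rg H_u=\rg K_u=2$, the two Hankel singular values $\rho_1(t)>\sigma_1>\rho_2(t)>\sigma_2$ have simple eigenspaces, and the four Blaschke products in Proposition~\ref{blaschke-dimension} reduce to unimodular factors $\Psi_1^H,\Psi_1^K,\Psi_2^H,\Psi_2^K$. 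Theorem~\ref{formule-inverse} with $q=2$ then expresses the denominator of $u(t,z)=A(t,z)/B(t,z)$ as $B(t,z)=\det\mathscr{C}(t,z)/\det\mathscr{C}(t,0)$.

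A direct expansion of the $2\times 2$ determinant (whose constant and leading coefficients in $z$ share a common factor $(\sigma_1^2-\sigma_2^2)(\rho_1^2-\rho_2^2)$ that cancels after normalization by $\det\mathscr{C}(t,0)$) yields
\[
p_1(t)\,p_2(t)=\frac{\sigma_1\sigma_2\,\Psi_1^H\Psi_1^K\Psi_2^H\Psi_2^K}{\rho_1(t)\,\rho_2(t)},\qquad |p_1p_2|(t)=\frac{\sigma_1\sigma_2}{\rho_1(t)\,\rho_2(t)}.
\]
Proposition~\ref{norme-projetes} rewrites Lemma~\ref{u1-to-0} as $(\rho_1^2-\sigma_1^2)(\sigma_1^2-\rho_2^2)=(\sigma_1^2-\sigma_2^2)\|u_1^K\|_{L^2}^2=O(e^{-\tau|t|})$ for some $\tau>0$, while conservation of $\tr H_u^2=Q+M$ fixes $\rho_1^2+\rho_2^2=Q+\sigma_1^2+\sigma_2^2$. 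Combined with the interlacement, this forces one of $\rho_j(t)$ to approach $\sigma_1$ and the other to approach $\sqrt{Q+\sigma_2^2}$ exponentially, so that $\rho_1(t)\rho_2(t)\to\sigma_1\sqrt{Q+\sigma_2^2}$ and
\[
\bigl| |p_1p_2|(t)-|p_\infty|\bigr|=O(e^{-\tau|t|}),\qquad |p_\infty|:=\frac{\sigma_2}{\sqrt{Q+\sigma_2^2}}.
\]

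By Proposition~\ref{crit-II} together with the remark following Corollary~\ref{expression-energie}, every weak $H^{1/2}$-limit of $u(t_n)$ for $t_n\to\pm\infty$ is a single soliton $v(z)=\alpha/(1-p_\infty z)$ with $|p_\infty|<1$, and a short computation of $K_v^2$ (analogous to the remark preceding Proposition~\ref{discrete}) identifies this $|p_\infty|$ with the value above, so the limit of $|p_1p_2|$ is exactly $|p_\infty|$. Since the coefficients of $A(t,\cdot)$ and $B(t,\cdot)$ are bounded and real-analytic in $t$ (Corollary~\ref{analytic-uk}), a compactness argument as in the proof of Proposition~\ref{crit-I} shows that $B(t_n,z)\to(1-p_\infty z)(1-p_\star z)$ with $|p_\star|=1$---otherwise $u(t_n)$ would converge strongly in every $H^s$, contradicting the unboundedness of $\|u(t)\|_{H^s}$ already obtained in the previous subsection from $\|u_1^K\|^2\to 0$. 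Thus for $|t|$ large, one pole of $u(t)$ stays near the interior point $p_\infty$ and remains bounded away from $\partial\mathbb{D}$, while the other approaches $\partial\mathbb{D}$. Distinctness for $|t|\ge t_0$ follows from the real-analyticity of the discriminant $(p_1+p_2)^2-4p_1p_2$ of $B$ in $t$, together with the observation that it is not identically zero (lest both poles collapse to the common value $p_\infty$, contradicting the appearance of a boundary pole).

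The main obstacle is to transfer the exponential rate of convergence of $|p_1p_2|$ to the individual escaping pole, despite the inevitable oscillation of the phases $\Psi_j^H,\Psi_k^K$ (which prevent $p_1+p_2$ itself from converging). The remedy is to select a continuous branch $p_1(t)$---real-analytic away from the discrete coincidence times---that stays close to $p_\infty$ by the previous paragraph, and then to use the uniform lower bound $|p_1(t)|\ge c>0$ together with the identity $|p_2(t)|=|p_1p_2|(t)/|p_1(t)|$ to deduce $1-|p_2(t)|=O(e^{-\tau|t|})$, which is the claimed exponential approach of the pole $1/p_2(t)$ to the unit circle.
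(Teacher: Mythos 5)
Your strategy---reconstruct $u(t)$ from its spectral data via Theorem~\ref{formule-inverse} and read off the motion of the poles from the exponential decay in Lemma~\ref{u1-to-0}---is the right one, and the preliminary observations (the trace constraint $\rho_1^2+\rho_2^2=Q+\sigma_1^2+\sigma_2^2$, the convergence of each $\rho_j^2$ to a definite limit, the product formula $|p_1p_2|=\sigma_1\sigma_2/(\rho_1\rho_2)$, and the identification $|p_\infty|=\sigma_2/\sqrt{Q+\sigma_2^2}$) are all correct. However, the final step contains a genuine gap. Knowing that $|p_1p_2|(t)\to|p_\infty|$ exponentially fast says nothing about the \emph{rate} at which the individual escaping pole approaches $\partial\mathbb{D}$: writing $1-|p_2|=\bigl(|p_1|-|p_1p_2|\bigr)/|p_1|$, what is needed is $|p_1(t)|-|p_\infty|=O(e^{-\tau|t|})$, not merely the lower bound $|p_1(t)|\geq c>0$. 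The product $|p_1p_2|$ could be exactly equal to $|p_\infty|$ for all $t$ while $|p_1|$ approaches $|p_\infty|$ at an arbitrary rate; and weak convergence of $u(t_n)$ to a soliton gives $|p_1(t_n)|\to|p_\infty|$ only along subsequences and without any rate. So the identity $|p_2|=|p_1p_2|/|p_1|$ together with a lower bound on $|p_1|$ yields nothing quantitative.

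The paper closes this gap by keeping the full quadratic polynomial $P_t(z)$ (proportional to $\det\mathscr{C}(t,z)$) and showing that \emph{all} of its coefficients---not just the ratio of the extreme ones---converge exponentially to those of a limit polynomial with two simple roots of distinct moduli; smooth dependence of simple roots on coefficients then transfers the exponential rate to the poles. This requires controlling the relative phase $\varphi:=\varphi_1-\varphi_2$ of the eigenvectors of $H_u$, which you rightly flag as the obstruction. In the generic regime $Q\neq\sigma_1^2-\sigma_2^2$ (your ``one $\rho_j\to\sigma_1$, the other $\to\sqrt{Q+\sigma_2^2}$''), the limit polynomial is, up to a constant, $(e^{i(\varphi-\psi)}-z)(\tau-\sigma_2z)$, so the unknown phase only rotates the root sitting on $\partial\mathbb{D}$ and is harmless. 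But in the borderline case $Q=\sigma_1^2-\sigma_2^2$, where both $\rho_j^2\to\sigma_1^2$, the limit polynomial's roots depend on $\varphi$ in a way that could place them elsewhere, and the whole argument would collapse. This is precisely where the hypothesis $\ell_1=0$ must be invoked a second time: from $(2Q+\sigma_1^2)\|u_1^K\|^2=\|w_1^K\|^2$ and the explicit formula for $w_1^K$ in terms of $\rho_j$, $\varphi_j$, $\sigma_k$, the paper deduces $|\rho_1e^{i\varphi_1}-\rho_2e^{i\varphi_2}|\to0$ exponentially, hence $\varphi\to0$ exponentially, after which the limit polynomial again has one root on $\partial\mathbb{D}$ and one strictly outside. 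Passing to the modulus of the product $p_1p_2$, as you propose, discards exactly this phase information and therefore cannot establish the claimed exponential rate.
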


\begin{proof}[Proof.]
As above, we denote the singular values associated to $u(t)$ by $\rho_1>\sigma_1>\rho_2>\sigma_2$, where $\rho_1$ and $\rho_2$ depend on time. Under the hypothesis of the lemma, we have seen that $u_1^K:=[u(t)]_1^K$ goes to zero exponentially fast. Together with the formula coming from Proposition \ref{norme-projetes} :
\[ \|u_1^K\|_{L^2}^2=\frac{(\rho_1^2-\sigma_1^2)(\sigma_1^2-\rho_2^2)}{(\sigma_1^2-\sigma_2^2)},\]
it means that at least one of the $\rho_j$'s shrinks exponentially fast to $\sigma_1$. Notice that we have seen during the proof of Lemma \ref{formulesQMJ} that $Q=\tr H_u^2 - \tr K_u^2$, so in our case, 
\begin{equation}\label{Q-rho}
Q=\rho_1^2-\sigma_1^2+\rho_2^2-\sigma_2^2.
\end{equation}
In particular, the $\rho_j$'s both converge exponentially fast to some limit.

Define the angles $\varphi_1,\varphi_2,\psi_1,\psi_2\in\mathbb{T}$ (depending on time) so that
\[\begin{array}{rr}
H_u\left( u_1^H\right) = \rho_1e^{i\varphi_1}u_1^H,
& \,K_u\left( u_1^K\right) = \sigma_1e^{i\psi_1}u_1^K, \\
H_u\left( u_2^H\right) = \rho_2e^{i\varphi_2}u_2^H,
&\, K_u\left( u_2^K\right) = \sigma_2e^{i\psi_2}u_2^K.
\end{array}
\]
Adapting Theorem \ref{formule-inverse} to this context of simple singular values (involving Blaschke products of degree $0$ only) shows that $u(t,z)$ is simply given by the sum of the coefficients of the inverse of the following matrix :
\[ \mathscr{C}(z):=\begin{pmatrix}
\dfrac{\rho_1e^{i\varphi_1}-\sigma_1e^{i\psi_1}z}{\rho_1^2-\sigma_1^2}
&\dfrac{\rho_1e^{i\varphi_1}-\sigma_2e^{i\psi_2}z}{\rho_1^2-\sigma_2^2} \\
\dfrac{\rho_2e^{i\varphi_2}-\sigma_1e^{i\psi_1}z}{\rho_2^2-\sigma_1^2}
&\dfrac{\rho_2e^{i\varphi_2}-\sigma_2e^{i\psi_2}z}{\rho_2^2-\sigma_2^2}
\end{pmatrix}.\]
Since all the coefficients of $\mathscr{C}(z)$ are polynomials in $z$, computing $\mathscr{C}^{-1}(z)$ thanks to the cofactor matrix, we see that the poles of $u(t,\cdot)$ will be given by the inverse of the roots of $\det \mathscr{C}(z)$. As we are only interested by the modulus of these roots, we can change $z$ into $ze^{-i\theta}$ and $\det \mathscr{C}(z)$ into $e^{-i\theta'}\det \mathscr{C}(z)$, for $\theta, \theta'\in\mathbb{T}$. So we only have to look for the zeros of
\[ z\mapsto \dfrac{\rho_1e^{i\varphi}-\sigma_1e^{i\psi}z}{\rho_1^2-\sigma_1^2}\dfrac{\rho_2-\sigma_2z}{\rho_2^2-\sigma_2^2}-\dfrac{\rho_1e^{i\varphi}-\sigma_2z}{\rho_1^2-\sigma_2^2}\dfrac{\rho_2-\sigma_1e^{i\psi}z}{\rho_2^2-\sigma_1^2},\]
where we have set $\varphi:=\varphi_1-\varphi_2$ and $\psi:=\psi_1-\psi_2$. Multiplying this polynomial by $(\rho_1^2-\sigma_1^2)(\rho_1^2-\sigma_2^2)(\sigma_1^2-\rho_2^2)(\rho_2^2-\sigma_2^2)$ means that we only have to seek the roots of
\begin{multline*}
P_t(z):=(\rho_1^2-\sigma_2^2)(\sigma_1^2-\rho_2^2)(\rho_1e^{i\varphi}-\sigma_1e^{i\psi}z)(\rho_2-\sigma_2z) \\
+(\rho_1^2-\sigma_1^2)(\rho_2^2-\sigma_2^2)(\rho_1e^{i\varphi}-\sigma_2z)(\rho_2-\sigma_1e^{i\psi}z).
\end{multline*}

We now have to distinguish three cases :

\underline{First case} : $\rho_1^2\to \sigma_1^2$, but $\rho_2^2\to \tau^2$, where $\sigma_2<\tau<\sigma_1$. In that case, it appears that if we define
\[ P^{\mathrm{lim},1}_t(z):=\sigma_1e^{i\psi}(\sigma_1^2-\sigma_2^2)(\sigma_1^2-\tau^2)(e^{i(\varphi-\psi)}-z)(\tau -\sigma_2 z),\]
then the coefficients of $P_t$ and $P^{\mathrm{lim},1}_t$ become exponentially close to each other. But so do their roots, because $P^{\mathrm{lim},1}_t$ has distinct roots (one of them is of modulus $1$ and the other one is of modulus $\tau/\sigma_2>1$), and the discriminant formulae are differentiable in the coefficients in this case. So one of the roots of $P_t$ converges exponentially fast to the unit circle $\partial\mathbb{D}$ (and so does the corresponding pole of $u(z)$).

\vspace*{0,5em}
\underline{Second case} : $\rho_2^2\to \sigma_1^2$, but $\rho_1^2\to (\tau')^2$, where $\tau'>\sigma_1$. This case goes as the preceding one, by considering the second term in $P_t$ as the leading order.

\vspace*{0,5em}
\underline{Third case} : $\rho_1^2\to \sigma_1^2$ and $\rho_2^2\to \sigma_1^2$. By the formula \eqref{Q-rho} for $Q$, it implies that $Q=\sigma_1^2-\sigma_2^2$, and then we obtain $\rho_1^2-\sigma_1^2=\sigma_1^2-\rho_2^2$. So the coefficients of $(\rho_1^2-\sigma_1^2)^{-1}P_t(z)$ come exponentially close to those of
\[ P_t^{\mathrm{lim},3}(z):=\sigma_1e^{i\psi}(\sigma_1^2-\sigma_2^2)\left[(e^{i(\varphi-\psi)}-z)(\sigma_1-\sigma_2z) +(\sigma_1e^{i\varphi}-\sigma_2z)(e^{-i\psi}-z)\right] .\]
In that case, we need something more to get to the conclusion, and this is precisely what preserves the asymptotic behaviour of $u(t)$ of just disclosing a simple crossing phenomenon, namely the fact that we also have $w_1^K:=[\Pi(|u(t)|^2)]_1^K\to 0$.

We first compute $w_1^K$ in terms of the variables $\rho_j$, $\varphi_j$ and $\sigma_k$. Writing $u=u_1^H+u_2^H$, we have
\[w_1^K=\left( H_u(u)\middle| \frac{u_1^K}{\|u_1^K\|_{L^2}^2}\right)u_1^K=\left( \rho_1e^{i\varphi_1}u_1^H+\rho_2e^{i\varphi_2}u_2^H\middle| \frac{u_1^K}{\|u_1^K\|_{L^2}^2}\right)u_1^K.\]
But $(u_j^H\vert u_1^K)$ is easy to compute. Indeed,
\begin{multline*}\rho_j^2(u_j^H\vert u_1^K)=(H_u^2(u_j^H)\vert u_1^K)=(u_j^H\vert H_u^2(u_1^K))\\
=(u_j^H\vert K_u^2(u_1^K)+(u_1^K\vert u)u)=\sigma_1^2(u_j^H\vert u_1^K)+\|u_j^H\|_{L^2}^2\|u_1^K\|_{L^2}^2,
\end{multline*}
so
\[ (u_j^H\vert u_1^K)=\frac{\|u_j^H\|_{L^2}^2\|u_1^K\|_{L^2}^2}{\rho_j^2-\sigma_1^2}.\]
Going back to the expression of $w_1^K$, with the help of Proposition \ref{norme-projetes} again, we find
\begin{align*}
w_1^K&=\left(\rho_1e^{i\varphi_1}\frac{\|u_1^H\|_{L^2}^2}{\rho_1^2-\sigma_1^2}-\rho_2e^{i\varphi_2}\frac{\|u_2^H\|_{L^2}^2}{\sigma_1^2-\rho_2^2}\right)u_1^K \\
&=\left(\rho_1e^{i\varphi_1}\frac{\rho_1^2-\sigma_2^2}{\rho_1^2-\rho_2^2}-\rho_2e^{i\varphi_2}\frac{\rho_2^2-\sigma_2^2}{\rho_1^2-\rho_2^2}\right) u_1^K.
\end{align*}
Now, since $\ell_1(u)=(2Q+\sigma_1^2)\|u^K_1\|_{L^2}^2-\|w_1^K\|_{L^2}^2=0$, it implies that for all times,
\[ (2Q+\sigma_1^2)=\frac{\|w_1^K\|_{L^2}^2}{\|u_1^K\|_{L^2}^2},\]
hence
\begin{align*} 
(2Q+\sigma_1^2)(\rho_1^2-\rho_2^2)^2&=\left| \rho_1e^{i\varphi_1}(\rho_1^2-\sigma_2^2)-\rho_2e^{i\varphi_2}(\rho_2^2-\sigma_2^2)\right|^2\\
&=\left| \rho_1e^{i\varphi_1}(\rho_1^2-\rho_2^2)+(\rho_1e^{i\varphi_1}-\rho_2e^{i\varphi_2})(\rho_2^2-\sigma_2^2)\right|^2.
\end{align*}
Therefore, $|\rho_1e^{i\varphi_1}-\rho_2e^{i\varphi_2}|^2$ has to go to zero exponentially fast. Developping this expression, we see that in particular, $\cos (\varphi_1-\varphi_2)=\cos \varphi \to 0$ exponentially fast. So $\varphi\to 0$ in $\mathbb{T}$ at an exponential rate. So we can replace the polynomial $P^{\mathrm{lim},3}_t$ above by
\[ \tilde P^{\mathrm{lim},3}_t(z):=2\sigma_1e^{i\psi}(\sigma_1^2-\sigma_2^2)(\sigma_1-\sigma_2z)(e^{-i\psi}-z).\]
This finishes to show that one of the roots of $P_t$ in $\mathbb{C}$ has to approach $\partial\mathbb{D}$ exponentially fast.
\end{proof}

Thanks to Lemma \ref{mouvement-pole}, we can come to the conclusion of the proof of the growth result on $\mathcal{V}(4)$.

\begin{proof}[Proof of Theorem \ref{turbu-V4}.]
Let $t_0\in\mathbb{R}$ as in Lemma \ref{mouvement-pole}, and write $u(t)$ as
\[ u(t,z)=\frac{\alpha}{1-pz}+\frac{\beta}{1-qz},\]
where $1-|p|\sim e^{-\kappa |t|}$ as $|t|\to +\infty$, and $|q|\leq q_{\mathrm{max}}<1$. We also know that $|\alpha|$ and $|\beta|$ are bounded functions (for instance, by the proof of Proposition \ref{crit-I}).

Observe that for some constant $C>0$, we have
\[ \frac{1}{C}\leq \left\| \frac{\alpha}{1-pz}\right\|_{H^{1/2}}^2\leq C.\]
The right bound is immediate with the one on $q$ and on $u$. The left bound comes from the fact that $u$ cannot come arbitrarily close to $\mathcal{V}(2)$ in $H^{1/2}$ : if there was a sequence of times $t_n$ such that
\[ \left\|u(t_n)-\frac{\beta (t_n)}{1-q(t_n)z}\right\|_{H^{1/2}}\longrightarrow 0,\]
then by compacity (since $\beta$ is bounded and $q$ is bounded away from $\partial\mathbb{D}$), $\beta (t_n)/(1-q(t_n)z)$ would converge along some subsequence to some $v_\infty\in\mathcal{V}(2)$ strongly in $H^{1/2}$. Then $\|u(t_n)-v_\infty\|_{H^{1/2}}$ would go to zero, but this cannot happen, since $K_{u(t_n)}^2$ has to distinct constant eigenvalues, whereas $K_{v_\infty}^2$ has only one. This fact can also be proved by invoking the stability of $\mathcal{V}(2)$ in $H^{1/2}_+$, which is established in \cite[Section 4]{thirouin-prog}.

As a consequence,
\[ \frac{1}{C}(1-|p|^2)^2\leq |\alpha|^2\leq C(1-|p|^2)^2.\]
Besides, the study of the power series $\sum x^jj^{2s}$ as $x\to 1^-$ shows that
\[ \left\|\frac{1}{1-pz}\right\|_{H^s}^2=\sum_{j=0}^{\infty} |p|^{2j}(1+j^2)^s\simeq C_s(1-|p|^2)^{-(1+2s)}\]
as $|p|\to 1$, for some constant $C_s>0$. Therefore, if $s>\frac{1}{2}$,
\[ \frac{1}{C'}\left(\frac{1}{1-|p|^2}\right)^{2s-1}\leq \left\| \frac{\alpha}{1-pz}\right\|_{H^s}^2\simeq \|u\|_{H^s}^2\leq C'\left(\frac{1}{1-|p|^2}\right)^{2s-1},\]
which concludes the proof.
\end{proof}

\subsection{Example of an initial data in \texorpdfstring{$\mathcal{V}(4)$}{V(4)} with \texorpdfstring{$\ell_1=0$}{l1=0}}
We conclude this chapter by giving an example showing that the condition $\ell_1=0$ can indeed occur on $\mathcal{V}(4)$. This will finish the proof of the existence of unbounded orbits in $H^s$ inside $\mathcal{V}(4)$.

\begin{prop}
Let $p\in \mathbb{C}$ with $0<|p|<1$, and fix
\[ u(z):=\frac{z}{(1-pz)^2},\quad \forall z\in\mathbb{D}.\]
Then $u\in \mathcal{V}(4)$, $K_u^2$ has two distinct eigenvalues, and $\ell_2(u)\neq 0$. In addition, $\ell_1(u)=0$ if and only if $|p|^2=3\sqrt{2}-4$.
\end{prop}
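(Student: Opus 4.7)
The plan is to verify $u \in \mathcal{V}(4)$ directly, compute the basic invariants $Q$, $M$, $|J|^2$ in closed form, extract the spectrum of $K_u^2$ from its trace and its determinant on $\Ran K_u$, and then translate $\ell_1(u) = 0$ into an algebraic equation via Corollary \ref{expression-energie}.

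Writing $u = A/B$ with $A(z) = z$ and $B(z) = (1-pz)^2$, one has $A \wedge B = 1$, $B(0) = 1$, $B$ has no root in $\overline{\mathbb{D}}$, and $\deg B = 2 = N$ with $\deg A = 1 \leq N-1$, which places $u$ in $\mathcal{V}(4)$ (case $d = 2N$). Setting $r := |p|^2$ and using $u(z) = \sum_{k \geq 1} k p^{k-1} z^k$, summation of standard series gives
\[Q = \frac{1+r}{(1-r)^3}, \qquad M = \frac{1+4r+r^2}{(1-r)^4}.\]
For $J(u) = \int_\mathbb{T} |u|^2 u$, the substitution $z = e^{ix}$ converts the integral into a contour integral around $|z|=1$ whose integrand has only one singularity inside $\mathbb{D}$: a double pole at $z = \bar p$. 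The residue theorem will yield
\[J = \frac{2\bar p (1+r)}{(1-r)^5}, \qquad |J|^2 = \frac{4 r (1+r)^2}{(1-r)^{10}}.\]

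Since $\rg K_u = N = 2$, the eigenvalues $\sigma_1^2 \geq \sigma_2^2$ of $K_u^2$, counted with multiplicity, satisfy $\sigma_1^2 + \sigma_2^2 = \tr K_u^2 = M$ by Lemma \ref{formulesQMJ}. For the product, I will use that $K_u = H_v$ with $v := S^* u = 1/(1-pz)^2$. Setting $g_1(z) := 1/(1-pz)$ and $g_2(z) := v(z)$, the elementary identity $pz/(1-pz)^2 = g_2 - g_1$ yields $H_v(e_n) = n p^n g_1 + p^n g_2$, so $\Ran K_u = \vect\{g_1, g_2\}$; writing the matrix of $K_u^2 = H_v^2$ in this basis and taking its determinant will produce
\[\sigma_1^2 \sigma_2^2 = \frac{r^2}{(1-r)^8}.\]
The discriminant $(\sigma_1^2 - \sigma_2^2)^2 = (1+r)^2 (r^2 + 6r + 1)/(1-r)^8$ is strictly positive on $(0, 1)$, so $K_u^2$ has two distinct simple eigenvalues.

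By Corollary \ref{expression-energie}, $\ell_1(u) = 0$ is equivalent to $|J|^2 = Q^2(Q + \sigma_2^2)$, i.e.\ $\sigma_2^2 = |J|^2/Q^2 - Q = (r^2 + 4r - 1)/(1-r)^4$. Inserting this value into the quadratic $X^2 - M X + r^2/(1-r)^8 = 0$ satisfied by $\sigma_1^2$ and $\sigma_2^2$ and clearing denominators reduces to $r^2 + 8 r - 2 = 0$, whose unique root in $(0, 1)$ is $r = 3\sqrt{2} - 4$; the manipulations are reversible, proving the equivalence. An entirely parallel argument shows that $\ell_2(u) = 0$ would force $\sigma_1^2 = (r^2 + 4r - 1)/(1-r)^4$ and again $r^2 + 8 r - 2 = 0$, but at that value $r^2 + 4r - 1 < 2$, so $\sigma_1^2 < 2/(1-r)^4 = \sigma_2^2$, contradicting $\sigma_1^2 > \sigma_2^2$. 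Hence $\ell_2(u) \neq 0$ for every admissible $p$. The only genuinely computational step is the extraction of $\sigma_1^2 \sigma_2^2$, which requires writing down the matrix of $K_u^2$ in the non-orthonormal basis $\{g_1, g_2\}$ and taking its determinant; once this is in hand, the rest of the argument is routine algebra.
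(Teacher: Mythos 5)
Your proof is correct and follows essentially the same route as the paper's: compute $Q$ and $|J|^2$, reduce the spectral data of $K_u^2$ to a $2\times 2$ matrix problem, and translate $\ell_1(u)=0$ into the algebraic condition $r^2+8r-2=0$ via Corollary \ref{expression-energie}. You do introduce two tidy algebraic shortcuts worth noting. First, rather than computing $\tr H_{\tilde u}^2$ from the matrix, you use the identity $\tr K_u^2 = M$ from Lemma \ref{formulesQMJ} (and compute $M$ by summing a power series); this is pleasant but parallel to the paper, which in any case needs the matrix for the determinant. Second, and more substantively, instead of solving the characteristic polynomial explicitly for $\sigma_j^2$ and then squaring to clear the radical (as the paper does), you substitute the \emph{target} value $X_0:=(r^2+4r-1)/(1-r)^4$ directly into $X^2-MX+r^2/(1-r)^8=0$. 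The factorization $(a-1)^2-(a+1)(a-1)+r^2$ with $a=r^2+4r$ then collapses to $r^2+8r-2$ without any radicals. This neatly sidesteps the paper's need to square and then go back to check which $\pm$ sign survives.

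One presentational caveat: your phrase ``the manipulations are reversible'' for the equivalence $\ell_1(u)=0\iff r^2+8r-2=0$ is not quite self-contained as written. The backward direction only shows that $X_0$ is \emph{a} root of the characteristic polynomial; you still need to identify it as the \emph{smaller} root $\sigma_2^2$ rather than $\sigma_1^2$. You do in fact supply this check in the next paragraph, where you observe that at $r^2+8r-2=0$ one has $X_0=(1-4r)/(1-r)^4$ and the complementary root (via Vieta, using $r^2=2-8r$) equals $2/(1-r)^4>X_0$. That argument simultaneously establishes $\ell_2(u)\neq 0$ and the identification $X_0=\sigma_2^2$; it would be cleaner to state explicitly that this observation is what closes the ``if'' direction of the equivalence, rather than to assert reversibility first and justify it afterwards. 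As it stands the logic is sound, only the ordering obscures it.

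One small addition to your derivation of $\Ran K_u$: you supply a nice direct proof (via $H_v(e_n)=np^n g_1 + p^n g_2$) that $\Ran K_u = \vect\{g_1,g_2\}$ where the paper simply cites \cite[Appendix 4]{ann}. This self-contained argument is a genuine plus.
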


\begin{proof}[Proof.]
We first compute $Q$ and $J$. Observe that
\[ Q(u)=\frac{1}{2\pi}\int_0^{2\pi}\frac{e^{2ix}}{(1-pe^{ix})^2(e^{ix}-\bar{p})^2}dx=\frac{1}{2i\pi}\int_{\mathcal{C}}\frac{z}{(1-pz)^2(z-\bar{p})^2}dz,\]
where $\mathcal{C}$ denotes the unit circle in $\mathbb{C}$. To calculate this contour integral, we use the residue formula, so we compute
\[ \res_{z=\bar{p}}\left[ \frac{z}{(1-pz)^2(z-\bar{p})^2}\right]=\left.\frac{d}{dz}\right|_{z=\bar{p}}\left[\frac{z}{(1-pz)^2}\right]. \]
Let $r:=|p|^2$. Then we find $Q(u)=\frac{1+r}{(1-r)^3}$. Similarly,
\[ J(u)=\frac{1}{2i\pi}\int_\mathcal{C}\frac{z^2}{(1-pz)^4(z-\bar{p})^2}dz=\left.\frac{d}{dz}\right|_{z=\bar{p}}\left[\frac{z^2}{(1-pz)^4}\right]=2Q(u)\frac{\bar{p}}{(1-|p|^2)^2}.\]
So the following expressions are established :
\[ |J(u)|^2=\frac{4r(1+r)^2}{(1-r)^{10}},\qquad \frac{|J(u)|^2}{Q(u)^2}=\frac{4r}{(1-r)^4}.\]

It remains to find the expression of the eigenvalues of $K_u^2$. Since $K_u=H_{S^*u}$, we define
\[ \tilde{u}(z):=\frac{1}{(1-pz)^2},\]
and we study $H_{\tilde{u}}^2$. We know from \cite[Appendix 4]{ann} that the image of $H_{\tilde{u}}$ is generated by $e_1(z):=\frac{1}{1-pz}$ and by $e_2(z):=\frac{1}{(1-pz)^2}$. By the means of a partial fraction decomposition, we find
\begin{align*}
H_{\tilde{u}}(e_1)&=\frac{|p|^2}{(1-|p|^2)^2}e_1+\frac{1}{1-|p|^2}e_2, \\
H_{\tilde{u}}(e_2)&=\frac{2|p|^2}{(1-|p|^2)^3}e_1+\frac{1}{(1-|p|^2)^2}e_2. 
\end{align*}
We can compute the matrix of $H_{\tilde{u}}^2$ in the basis $((1-r)^{-1}e_1, e_2)$. It reads
\[ \frac{1}{(1-r)^4}\begin{bmatrix}
r(2+r) &1+r \\
2r(1+r) &1+2r
\end{bmatrix}.\]
We have $\tr H_{\tilde{u}}^2=(1-r)^{-4}(1+4r+r^2)$, and $\det H_{\tilde{u}}^2=(1-r)^{-8}r[(2+r)(1+2r)-2(1+r)^2]=(1-r)^{-8}r^2$. Thus the characteristic polynomial of $H_{\tilde{u}}^2$ equals
\[ \chi(X)=\frac{1}{(1-r)^8}P((1-r)^4X),\]
where $P(X)=r^2-(1+4r+r^2)X+X^2$. We deduce from $P$ the eigenvalues of $K_u^2$ :
\[ \frac{1+4r+r^2\pm (1+r)\sqrt{1+6r+r^2}}{2(1-r)^4},\]
where the $+$ sign corresponds to $\sigma_1^2$ and the $-$ sign to $\sigma_2^2$. Note that $\sigma_1>\sigma_2$ indeed.

Compute
\[ Q+\sigma_j^2=\frac{3+4r-r^2\pm (1+r)\sqrt{1+6r+r^2}}{2(1-r)^4},\]
so $|J|^2=Q^2(Q+\sigma_j^2)$ if and only if
\begin{equation}\label{equation-r} 
8r=3+4r-r^2\pm (1+r)\sqrt{1+6r+r^2}.
\end{equation}
This implies that $(3-4r-r^2)^2=(1+r)^2(1+6r+r^2)$, and developping this expression, the terms in $r^4$ and $r^3$ cancel out. We end up with an equation of degree $2$ on $r$ :
\[ r^2+8r-2=0.\]
This equation ony has one positive solution, $r=3\sqrt{2}-4$. Going back to \eqref{equation-r}, we see that only the $-$ sign is consistent. Consequently, if $|p|^2=3\sqrt{2}-4$, then $|J|^2=Q^2(Q+\sigma_2^2)$ and therefore $\ell_1(u)=0$, whereas $\ell_2(u)=0$ never occurs for functions of the type $\frac{z}{(1-pz)^2}$.
\end{proof}

\vspace*{1em}
\section{Computation of the Poisson brackets}\label{cinq}
In the last part of this paper, we intend to finish the proof of Theorem \ref{invol-thm} by proving the Poisson-commutation of the conservation laws of the quadratic Szeg\H{o} equation. Throughout this section, the notation $\|\cdot\|$ will always refer to the $L^2$ norm.

\subsection{The generating series}
Recall some notations : for $u\in H^{1/2}_+$, and $n\geq 1$, we set $J_n(u)=(H_u^n(1)\vert 1)$. In particular, $J_2=Q$ and $J_3=J$ --- in the sequel, we prefer these harmonized notations. We also define, for $x\in \mathbb{R}$ such that $\frac{1}{x}\notin \spec (H_u^2)$, and $m\geq 0$,
\begin{equation*}
\mathscr{J}^{(m)}(x):=\left( (I-xH_u^2)^{-1}H_u^m(1)\vert 1\right) = \sum_{j=0}^{+\infty} x^jJ_{m+2j}.
\end{equation*}

The first result we establish is the following alternative form for the generating series :

\begin{prop}\label{generating-prop}
Let $u\in H^{1/2}_+$, and denote by $\sigma_k$, $\ell_k$, $k\geq 1$, the conservation laws associated to $u$ as defined above. Then
\begin{equation}\label{generating}
\sum_{k\geq 1}\frac{\ell_k}{1-x\sigma_k^2}=\mathscr{R}(x):=\frac{J_2^2+x|\Ji (x)|^2-x^2\! \mathscr{J}^{(4)}(x)^2}{\J (x)}.
\end{equation}
\end{prop}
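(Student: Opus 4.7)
The plan is to view the left-hand side as a matrix element of the resolvent $(I-xK_u^2)^{-1}$ and then convert to matrix elements of the resolvent of $H_u^2$ via the rank-one identity \eqref{huku}, so that the generating series $\J,\Jp,\Ji,\mathscr{J}^{(4)}$ appear naturally.

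First, decomposing $u=\sum_k u_k^K$ (with the convention $\sigma_\infty^2:=0$) and $\Pi(|u|^2)=H_u^2(1)=\sum_k w_k^K$ on the eigenspaces of $K_u^2$, I would interpret
\[
A(x):=\sum_{k}\frac{\|u_k^K\|^2}{1-x\sigma_k^2}=\bigl((I-xK_u^2)^{-1}u\bigm\vert u\bigr), \quad B(x):=\sum_{k}\frac{\|w_k^K\|^2}{1-x\sigma_k^2}=\bigl((I-xK_u^2)^{-1}H_u^2(1)\bigm\vert H_u^2(1)\bigr).
\]
Using $\dfrac{2Q+\sigma_k^2}{1-x\sigma_k^2}=\dfrac{2Q+1/x}{1-x\sigma_k^2}-\dfrac{1}{x}$ together with $\sum_k\|u_k^K\|^2=Q$, the left-hand side of \eqref{generating} rewrites as
\[
\sum_{k\geq 1}\frac{\ell_k}{1-x\sigma_k^2}=\bigl(2Q+\tfrac{1}{x}\bigr)A(x)-B(x)-\tfrac{Q}{x}.
\]

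Second, the relation \eqref{huku} gives $I-xK_u^2=T+xP$, where $T:=I-xH_u^2$ is self-adjoint and $P(h):=(h\vert u)u$ is rank-one. The Sherman--Morrison formula then yields
\[
\bigl((T+xP)^{-1}a\bigm\vert b\bigr)=(T^{-1}a\vert b)-\frac{x(T^{-1}a\vert u)(T^{-1}u\vert b)}{1+x(T^{-1}u\vert u)}.
\]
The crucial step is to identify each matrix element of $T^{-1}$ with a generating series. Expanding $T^{-1}=\sum_{j\geq 0}x^jH_u^{2j}$ and using that $H_u^2$ is $\mathbb{C}$-linear self-adjoint while $H_u$ is antilinear symmetric, $(H_ua\vert b)=(H_ub\vert a)$, a direct calculation gives
\[
(T^{-1}u\vert u)=\Jp(x),\quad (T^{-1}H_u^2(1)\vert H_u^2(1))=\mathscr{J}^{(4)}(x),\quad (T^{-1}u\vert H_u^2(1))=\Ji(x),
\]
the adjoint cross-term being $\overline{\Ji(x)}$, so that the Sherman--Morrison numerator in $B(x)$ is $x|\Ji(x)|^2$. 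Together with the two elementary identities $\J(x)=1+x\Jp(x)$ (using the normalization $(1\vert 1)=1$) and $\Jp(x)=Q+x\mathscr{J}^{(4)}(x)$ that come from shifting indices in the series, I obtain
\[
A(x)=\frac{\Jp(x)}{\J(x)},\qquad B(x)=\mathscr{J}^{(4)}(x)-\frac{x|\Ji(x)|^2}{\J(x)}.
\]

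Finally, substituting these into the rewriting of the left-hand side, multiplying through by $\J(x)$, and using the identity $\Jp(x)\bigl(Q-x\mathscr{J}^{(4)}(x)\bigr)=Q^2-x^2\mathscr{J}^{(4)}(x)^2$ (which is just $(Q+x\mathscr{J}^{(4)})(Q-x\mathscr{J}^{(4)})$), every term involving $Q/x$, $\mathscr{J}^{(4)}$ alone, or $Qx\mathscr{J}^{(4)}$ cancels out, leaving exactly $J_2^2+x|\Ji(x)|^2-x^2\mathscr{J}^{(4)}(x)^2$ in the numerator, as required. The main obstacle is the careful bookkeeping of the $\mathbb{C}$-antilinearity of $H_u$ in the third step: one must verify that $(T^{-1}u\vert u)$ is real (so that the Sherman--Morrison denominator equals $\J(x)$ exactly, with no spurious conjugate) and that $(T^{-1}u\vert H_u^2(1))$ picks out $\Ji(x)$ rather than $\overline{\Ji(x)}$, so that their product gives $|\Ji(x)|^2$.
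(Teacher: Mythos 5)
Your proof is correct, and it follows a genuinely cleaner route than the paper's. Both proofs rest on the same rank-one identity $K_u^2=H_u^2-(\cdot\vert u)u$ (the relations \eqref{lien-res}--\eqref{KJ2} of the paper are exactly Sherman--Morrison in disguise), but you deploy it differently. The paper first proves Lemma \ref{resolvante-K}, which expresses $\mathscr{R}(x)$ through $\K(x)$, $\Ki(x)$, $\Kp(x)$; it then restricts to generic rational $u$ (all singular values simple, $\rg H_u=\rg K_u$), expands these three functionals over the eigenspaces of $K_u^2$, computes the residue $\alpha_k$ of $\mathscr{R}$ at $1/\sigma_k^2$, and finally shows $\alpha_k=\ell_k$ via the colinearity of $1_k^K$ and $u_k^K$ and the formula for $\xi_k$. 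You bypass all of this: by observing that the left-hand side is an explicit linear combination of the two resolvent matrix elements $A(x)=\Kp(x)$ and $B(x)=((I-xK_u^2)^{-1}H_u^2(1)\vert H_u^2(1))$ --- the second of which the paper never introduces --- the quantities $\|u_k^K\|^2$ and $\|w_k^K\|^2$ appear automatically, and a single application of Sherman--Morrison together with the shift identities $\J(x)=1+x\Jp(x)$, $\Jp(x)=J_2+x\!\mathscr{J}^{(4)}(x)$ closes the computation algebraically. In particular, you need neither the restriction to simple singular values nor the pole/residue analysis. I have verified your bookkeeping of the antilinearity: $(T^{-1}u\vert u)=\Jp(x)$ is indeed real (self-adjointness of $T^{-1}$ plus the antilinear symmetry $(H_ua\vert b)=(H_ub\vert a)$), and $(T^{-1}u\vert H_u^2(1))=\Ji(x)$ while $(T^{-1}H_u^2(1)\vert u)=\overline{\Ji(x)}$, giving $|\Ji(x)|^2$ in the Sherman--Morrison numerator as claimed.

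One caveat you should make explicit: since $A(x)$ and $B(x)$ are genuine resolvent matrix elements, they include the kernel contribution, i.e.\ they contain the terms $\|u_\infty^K\|^2$ and $\|w_\infty^K\|^2$ (with $\sigma_\infty=0$). Consequently your use of $\sum_k\|u_k^K\|^2=Q$ requires the sum to run over $1\leq k\leq\infty$, and the identity you prove is $\sum_{1\leq k\leq\infty}\frac{\ell_k}{1-x\sigma_k^2}=\mathscr{R}(x)$. This coincides with the statement's $\sum_{k\geq 1}$ exactly when $\ell_\infty=0$, i.e.\ when $u\perp\ker K_u^2$ --- which is precisely the set the paper restricts to before concluding by density. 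So your computation is consistent with the paper's, but it is worth flagging that the ``$\sum_{k\geq 1}$'' in the statement must be read as including the $k=\infty$ term (or, equivalently, that one works on the dense set where $\ell_\infty=0$); without that reading, the formula fails already for constant $u$.
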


\begin{rem}
We should observe that to some extent, Lemma \ref{generating-prop} is a generalization of the formulae of Lemma \ref{formulesQMJ}, that we can recover here by developping $\mathscr{R}(x)$ as a power series.
\end{rem}

We are going to express the right hand side of \eqref{generating} in terms of the resolvant of $K_u$. As above, we set, for appropriate $x\in\mathbb{R}$,
\begin{align*}
\K (x)&:= ((I-xK_u^2)^{-1}(1)\vert 1) ,\\
\Ki (x)&:= ((I-xK_u^2)^{-1}(u)\vert 1),\\
\Kp (x)&:= ((I-xK_u^2)^{-1}(u)\vert u).
\end{align*}

\begin{lemme}\label{resolvante-K}
For all $x\in\mathbb{R}$ such that it is defined, we have
\begin{equation*}
2+2xJ_2-x^2\mathscr{R}(x)=\\
\K (x)+2x\re (\overline{J_1}\Ki (x))+(1-x\Kp (x))\left( 1 + x(2J_2-|J_1|^2)\right).
\end{equation*}
\end{lemme}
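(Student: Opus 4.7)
The plan is to reduce everything to a Sherman--Morrison computation applied to the rank-one relation \eqref{huku}. Set $A:=I-xK_u^2$ and $B:=I-xH_u^2$; then \eqref{huku} reads $A = B + x(\cdot\vert u)u$, and for $x\in\mathbb{R}$ such that neither $1/x\in\spec H_u^2$ nor $1/x\in\spec K_u^2$, the standard rank-one inversion formula gives, for every $h\in L^2_+$,
\[ A^{-1}h = B^{-1}h - \frac{x(B^{-1}h\vert u)}{1+x(B^{-1}u\vert u)}\, B^{-1}u. \]

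The next step is to identify the relevant inner products in terms of the series $\J$, $\Z$, $\Jp$. Since $B^{-1}$ is a real power series in $H_u^2$, it is $\mathbb{C}$-linear, self-adjoint and commutes with $H_u$; writing $u=H_u(1)$ gives $B^{-1}u = H_u(B^{-1}(1))$, so using the symmetry $(H_uf\vert g) = (H_ug\vert f)$ valid on $L^2_+$,
\[ (B^{-1}u\vert u) = (H_u^2(1)\vert B^{-1}(1)) = (B^{-1}H_u^2(1)\vert 1) = \Jp(x). \]
The identity $\J(x) = 1 + x\Jp(x)$ (immediate from the power series) then yields $1+x(B^{-1}u\vert u) = \J(x)$. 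Similarly $(B^{-1}u\vert 1) = \Z(x)$ by definition, and $(B^{-1}(1)\vert u) = \overline{\Z(x)}$ by self-adjointness of $B^{-1}$. Applying Sherman--Morrison with $h\in\{1,u\}$ then produces the three core formulas
\[ \K(x) = \J(x) - \frac{x|\Z(x)|^2}{\J(x)},\qquad \Ki(x) = \frac{\Z(x)}{\J(x)},\qquad \Kp(x) = \frac{\Jp(x)}{\J(x)}, \]
from which in particular $1-x\Kp(x) = 1/\J(x)$.

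The final step is to substitute these into the right-hand side of the lemma, clear denominators by multiplying by $\J(x)$, and regroup. The three relevant terms combine as
\[ -x|\Z(x)|^2 + 2x\re(\overline{J_1}\Z(x)) - x|J_1|^2 = -x|\Z(x) - J_1|^2 = -x^3|\Ji(x)|^2, \]
where the last equality uses $\Z(x) - J_1 = x\Ji(x)$, read off directly from the series $\Z(x)=\sum_{j\geq 0}x^jJ_{1+2j}$. Hence $\J(x)$ times the right-hand side equals $\J(x)^2 + 1 + 2xJ_2 - x^3|\Ji(x)|^2$, while $\J(x)$ times the left-hand side, after expanding $\mathscr{R}(x)$, becomes $2\J(x)+2xJ_2\J(x)-x^2J_2^2 - x^3|\Ji(x)|^2 + x^4\! \mathscr{J}^{(4)}(x)^2$. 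Equating, the $x^3|\Ji(x)|^2$ terms cancel and the identity collapses to
\[ \bigl(\J(x) - 1 - xJ_2\bigr)^2 = x^4\! \mathscr{J}^{(4)}(x)^2, \]
which is tautological from the Taylor expansion $\J(x) = 1 + xJ_2 + x^2\! \mathscr{J}^{(4)}(x)$. The main (and only) obstacle is careful algebraic bookkeeping of signs when recognising the squared modulus; there is no analytic difficulty beyond the rank-one inversion formula.
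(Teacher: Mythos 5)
Your proposal is correct and follows essentially the same route as the paper: both rely on the rank-one identity $K_u^2=H_u^2-(\cdot\vert u)u$ to derive the resolvent relations $\K(x)=\J(x)-x|\Z(x)|^2/\J(x)$, $\Ki(x)=\Z(x)/\J(x)$ and $1-x\Kp(x)=1/\J(x)$ (you phrase this as Sherman--Morrison, the paper writes out the resolvent-difference manipulation directly, but it is the same computation), and both then finish via the Taylor identities $\Z(x)=J_1+x\Ji(x)$ and $\J(x)=1+xJ_2+x^2\mathscr{J}^{(4)}(x)$. The only cosmetic difference is that the paper transforms $x^2\mathscr{R}(x)$ forward into the right-hand side, whereas you clear the denominator $\J(x)$ on both sides and reduce to the tautology $(\J(x)-1-xJ_2)^2=x^4\mathscr{J}^{(4)}(x)^2$.
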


\begin{proof}[Proof.]
The proof relies on identities discovered in \cite{livrePG, Xu3} that we recall here. Since $K_u^2=H_u^2-(\cdot \vert u)u$ (see \eqref{huku}), we have, for $h\in L^2_+$,
\begin{align*}
(I-xK_u^2)^{-1}(h)-(I-xH_u^2)^{-1}(h)&=(I-xK_u^2)^{-1}\left[ (I-xH_u^2)-(I-xK_u^2) \right] (I-xH_u^2)^{-1}(h) \\
&= -x (h\vert (I-xH_u^2)^{-1}(u)) (I-xK_u^2)^{-1}(u).
\end{align*}
Taking $h=u$ yields
\begin{equation}\label{lien-res}
(I-xH_u^2)^{-1}(u)=\J (x)\cdot (I-xK_u^2)^{-1}(u),
\end{equation}
and taking $h=1$ gives, once we have made the scalar product with $1$,
\[ ((I-xK_u^2)^{-1}(1)\vert 1)-((I-xH_u^2)^{-1}(1)\vert 1)=-x\overline{\Z (x)}\Ki (x). \]
Since $\Z (x)=\J(x)\Ki (x)$ by \eqref{lien-res}, this can also be written as
\begin{equation}\label{KJ}
\K (x)=\J(x) (1-x|\Ki (x)|^2)=\J (x)-x\frac{|\Z (x)|^2}{\J(x)}.
\end{equation}
Observe also that $\J(x)=1+x\Jp (x)=1+x\J (x)\Kp (x)$, hence
\begin{equation} \label{KJ2}
\frac{1}{\J (x)}=1-x\Kp (x).
\end{equation}
Finally, we have $J_2+x\! \mathscr{J}^{(4)}(x)=\Jp (x)$ and $J_1+x\Ji(x)=\Z(x)$.

Now we are ready to transform the expression of $\mathscr{R}(x)$, using \eqref{KJ}, and \eqref{KJ2} together with \eqref{lien-res} :
\begin{align*}
x^2&\mathscr{R}(x)=\frac{-x^4\! \mathscr{J}^{(4)}(x)^2+x^3|\Ji (x)|^2+x^2J_2^2}{\J (x)}\\
&=-\frac{x^2\left[ \Jp (x) -J_2\right]^2}{\J (x)}+\frac{x^3}{\J (x)}\left| \frac{\Z (x)-J_1}{x}\right|^2+\frac{x^2J_2^2}{\J (x)} \\
&=-\frac{x^2\Jp (x)^2}{\J (x)}+2x^2J_2\Kp (x)-(\K (x)-\J (x))-\frac{2x\re (\overline{J_1}\Z (x))}{\J (x)}+\frac{x|J_1|^2}{\J (x)} \\
&=2-\frac{1}{\J (x)}+2x^2J_2\Kp (x)-\K (x)-2x\re (\overline{J_1}\Ki (x))+\frac{x|J_1|^2}{\J (x)} \\
&=2+2xJ_2-(1-x\Kp (x))\left( 1 + x(2J_2-|J_1|^2)\right)-\K (x)-2x\re (\overline{J_1}\Ki (x)),
\end{align*}
and the lemma is proved.
\end{proof}

Now we can turn to the
\begin{proof}[Proof of Proposition \ref{generating-prop}.]
Let us first restrict to a convenient framework : we assume that $u$ is a rational function, with $\rg H_u=\rg K_u$ (\emph{i.e.} $u\perp \ker K_u$), and denoting by $\{\rho_j\}$ (resp. $\{\sigma_k\}$) the elements of $\Xi_u^H$ (resp. $\Xi_u^K$), we also assume that all these singular values are of multiplicity one. In particular, this imposes all the eigenvalues of $K_u^2$ to be $K$-dominant. Then the general result will follow by density of such functions in $H^{1/2}_+$.

We are going to study the poles of $\mathscr{R}(x)$. Recall that if $h\in L^2_+$ is given, $h_k^K$ refers to the orthogonal projection of $h$ onto $\ker (K_u^2-\sigma_k^2I)$. In particular, we have
\begin{gather*}
u=\sum_k u_k^K,\\
1=\sum_k 1_k^K,
\end{gather*}
all the sums being finite. As a consequence, we can write
\begin{align*}
\K (x)&= \sum_k \frac{\|1_k^K\|^2}{1-x\sigma_k^2},\\
\Ki (x)&= \sum_k \frac{(u_k^K\vert 1)}{1-x\sigma_k^2},\\
\Kp (x)&= \sum_k \frac{\|u_k^K\|^2}{1-x\sigma_k^2}.
\end{align*}

By Lemma \ref{resolvante-K} and the previous expressions, we then see that $\mathscr{R}(x)$ is a rational function of $x$, that it has simple poles at each $\frac{1}{\sigma_k^2}$, and that its limit as $x\to +\infty$ equals $0$.

Besides, multiplying the equality in Lemma \ref{resolvante-K} by $(1-x\sigma_k^2)$ and evaluating at $x=1/\sigma_k^2$ gives the following formula for the poles of $\mathscr{R}(x)$ :
\[ \alpha_k:=\|u_k^K\|^2(2J_2-|J_1|^2+\sigma_k^2)-2\sigma_k^2\re \left( \overline{J_1} (u_k^K\vert 1)\right) -\sigma_k^4\|1_k^K\|^2.\]

It remains to show that $\alpha_k=\ell_k$ for all $k\geq 1$. Using the fact that $1_k^K$ is colinear to $u_k^K$ because of our assumption on the dimension of the eigenspaces of $K_u^2$, we have
\begin{align*}
\alpha_k&=\|u_k^K\|^2(2Q-|(u\vert 1)|^2+\sigma_k^2)-2\sigma_k^2\re \left( (1\vert u) (u_k^K\vert 1)\right) -\sigma_k^4\|1_k^K\|^2 \\
&=\|u_k^K\|^2\left( 2Q +\sigma_k^2 -|(u\vert 1)|^2 -2\sigma_k^2\re \left( (1\vert u)\frac{(u_k^K\vert 1)}{\|u_k^K\|^2}\right)-\sigma_k^4\frac{|(u_k^K\vert 1)|^2}{\|u_k^K\|^4}\right)\\
&=\|u_k^K\|^2\left( 2Q +\sigma_k^2 -\left| (u\vert 1) +\sigma_k^2 \frac{(u_k^K\vert 1)}{\|u_k^K\|^2}\right|^2\right).
\end{align*}
Because of formula $K_u^2+(\cdot \vert u)u=H_u^2$, we have
\[(u\vert 1) +\sigma_k^2 \frac{(u_k^K\vert 1)}{\|u_k^K\|^2}=(u\vert 1) + \frac{\left( H_u^2(u_k^K)-(u_k^K\vert u)u\middle\vert 1\right)}{\|u_k^K\|^2}=\frac{(u_k^K\vert H_u^2(1))}{\|u_k^K\|^2}=\overline{\xi_k},\] 
hence $\alpha_k= \|u_k^K\|^2((2Q+\sigma_k^2)-|\xi_k|^2)= \ell_k$ by Lemma \ref{formulation-alternative}. The proof of Proposition \ref{generating-prop} is now complete.
\end{proof}

\vspace*{0,5em}
In the sequel, we prefer manipulating another generating function, coming from $\mathscr{R}(x)$, which involves functionals $\mathscr{J}^{(m)}$ that are of lower order. We thus define
\[ \mathscr{F}(x):=2J_2-x\mathscr{R}(x).\]
Since $J_2^2-x^2\! \mathscr{J}^{(4)}(x)^2=(J_2-x\! \mathscr{J}^{(4)}(x))(J_2+x\! \mathscr{J}^{(4)}(x))=(2J_2-\Jp(x))\Jp(x)$, we get
\[\begin{aligned} \mathscr{F}(x)&=\frac{2J_2\J (x)-x^2|\Ji (x)|^2 -x(J_2^2-x^2\! \mathscr{J}^{(4)}(x)^2)}{\J (x)}\\
&=\frac{2J_2(\J (x)-x\Jp (x))-x^2|\Ji (x)|^2 +x\Jp (x)^2}{\J (x)}\\
&=\frac{2J_2+x\Jp (x)^2-x^2|\Ji (x)|^2}{\J (x)}.
\end{aligned}\]
Since $\mathscr{R}(x)$ is invariant by rotation of $u$ by $e^{i\theta}$, we have $\{J_2,\mathscr{R}(x)\}=0$, hence
\[ \{ \mathscr{F}(x),\sigma_k^2\}=-x\{\mathscr{R}(x),\sigma_k^2\},\qquad \{\mathscr{F}(x),\mathscr{F}(y)\}=xy\{ \mathscr{R}(x),\mathscr{R}(y)\},\]
so from now on, we only study $\mathscr{F}(x)$.

\subsection{A Lax pair for \texorpdfstring{$\mathscr{F}(x)$}{Fx}}
As in \cite{ann, GGtori}, it is of high importance to study the evolution of the Hamiltonian system generated by $\mathscr{F}(x)$ (where $x\in \mathbb{R}$ is fixed). In particular, we are going to prove that the evolution given by $\dot{u}=X_{\mathscr{F}(x)}(u)$ also admits a Lax pair for $K_u$. As a consequence, the $k$-th eigenvalue of $K_u^2$ will be conserved by this flow, so we will obtain the identity
\[ \{ \mathscr{F}(x),\sigma_k^2\}=0,\quad \forall k\geq 1.\]
In view of \eqref{generating}, and of the fact that $\{\sigma_j^2,\sigma_k^2\}=0$ for any $j,k\geq 1$, we will get that
\[ \{\ell_j, \sigma_k^2\}=0,\quad \forall j,k\geq 1.\]

We first introduce th following notations :
\begin{gather*}
w^0(x):= (I-xH_u^2)^{-1}(1), \\
w^1(x):= (I-xH_u^2)^{-1}(u).
\end{gather*}
Note that $w^1(x)=H_u(w^0(x))$, and that we also recover $w^0$ from $w^1$ thanks to the formula $1+xH_u(w^1(x))=w^0(x)$.

\begin{thm}\label{commutation-sigma}
The Hamiltonian vector field associated to the functional $\mathscr{F}(x)$ (where $x\in\mathbb{R}$ is fixed) is given by
\begin{multline}\label{champ_F}
X_{\mathscr{F}(x)}(u)=\frac{-i}{\J }\left( 4u+x(4\Jp -2\mathscr{F})w^0H_u(w^0)\right. \\ \left. -2x^2\overline{\Ji}(H_u(w^0))^2-2x^3\Ji (H_u(w^1))^2-4x^2\Ji H_u(w^1)\right)
\end{multline}
In addition, for any solution to the evolution equation $\dot{u}=X_{\mathscr{F}(x)}(u)$, we have
\[ \frac{d}{dt}K_u=[B_u^x,K_u],\]
where $B_u^x$ is a skew-symmetric operator given by
\begin{gather}
B_u^x=-iA_u^x, \label{bux}\\
A_u^x:= \frac{1}{\J }\left(2I+(2\Jp-\mathscr{F})\cdot\left( xT_{w^0}T_{\overline{w^0}}+x^2T_{w^1}T_{\overline{w^1}}\right) -2x^2\left( \Ji T_{w^0}T_{\overline{w^1}}+\overline{\Ji }T_{w^1}T_{\overline{w^0}}\right)\right)\label{aux}
\end{gather}
\end{thm}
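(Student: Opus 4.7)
The proof naturally splits into two parts: computing the symplectic gradient $X_{\mathscr{F}(x)}$, and then verifying the Lax pair identity for $K_u$. The overall approach mirrors the variational calculations carried out for the cubic Szeg\H{o} hierarchy in \cite{GGtori}, but is more involved because $\mathscr{F}(x)$ is a rational function of the generating series $\J,\Jp,\Ji$.

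For the first part, I would write $\mathscr{F}(x) = N(x)/\J (x)$ with $N(x) = 2J_2 + x\Jp (x)^2 - x^2|\Ji (x)|^2$ and apply the quotient rule. The individual differentials $d\mathscr{J}^{(m)}(x)[h]$ follow from the resolvent identity
\[ d\bigl[(I-xH_u^2)^{-1}\bigr][h] \;=\; x\,(I-xH_u^2)^{-1}\bigl(H_h H_u + H_u H_h\bigr)(I-xH_u^2)^{-1}, \]
together with the fact that $dH_u[h] = H_h$ is $\mathbb{R}$-linear in $h\in L^2_+$. Every resulting term has the form $(H_h(w^i) \mid w^j)$ or $(H_u H_h(w^i) \mid w^j)$ for $i,j\in\{0,1\}$, and can be rewritten as an $L^2$ pairing against $h$ via the trilinear Hankel symmetry
\[ (H_a f \mid g) \;=\; \int_{\mathbb{T}} a\,\overline{fg}\,\frac{dx}{2\pi} \;=\; \bigl(\Pi(fg) \bigm| a\bigr)_{L^2_+}, \]
valid whenever $fg \in L^2$. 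Reading off the gradient $G \in L^2_+$ defined by $d\mathscr{F}(x)[h] = \re (G \mid h)$ then gives $X_{\mathscr{F}(x)} = -iG$. The global factor $1/\J (x)$ in \eqref{champ_F} is produced by the quotient rule; the five ``atoms'' $u$, $w^0 H_u(w^0)$, $(H_u(w^0))^2$, $(H_u(w^1))^2$ and $H_u(w^1)$ correspond, respectively, to differentiating $J_2$, then $\J$ in the denominator, then $|\Ji|^2$ in both its $\Ji$ and $\overline{\Ji}$ guises, and finally the remaining $\Ji$-piece surviving after the expansion of $1/\J$. The precise coefficients in \eqref{champ_F} emerge after regrouping terms and invoking the tautology $\mathscr{F}\cdot\J = 2J_2 + x\Jp^2 - x^2|\Ji|^2$.

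For the Lax pair, observe that $u \mapsto H_u$ is antilinear and $\mathbb{R}$-linear in $u$, whence $\tfrac{d}{dt} K_u = K_{\dot u}$ as antilinear operators. It thus suffices to verify
\[ K_{X_{\mathscr{F}(x)}(u)}(h) \;=\; B_u^x\bigl(K_u h\bigr) - K_u\bigl(B_u^x h\bigr), \qquad \forall\, h \in L^2_+. \]
Expanding $A_u^x$ as in \eqref{aux} and using the standard commutation identities for compositions of Toeplitz operators with holomorphic and antiholomorphic symbols, one develops the right-hand side into a sum of Hankel operators. The relations $w^1 = H_u(w^0)$ and $w^0 = 1 + xH_u(w^1)$, combined with $K_u h = S^* H_u h$, allow the Toeplitz products $T_{w^i} T_{\overline{w^j}}$ acting on $K_u h$ to collapse into Hankel symbols built directly from $w^0, w^1$ and $u$. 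Term-by-term matching with $K_{X_{\mathscr{F}(x)}(u)}(h)$ then yields the Lax pair.

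The main obstacle is organisational. The quotient rule applied to $\mathscr{F}(x) = N(x)/\J (x)$ produces close to a dozen contributions before regrouping, and the Lax pair verification involves a parallel accounting of Toeplitz commutators. Keeping track of complex conjugations---notably the $\Ji$ versus $\overline{\Ji}$ contributions arising from $|\Ji|^2$---and ensuring that the $1/\J (x)$ factor emerges cleanly will require systematic use of the $K/H$ resolvent identities \eqref{KJ}--\eqref{KJ2} established in Lemma \ref{resolvante-K}. Once this bookkeeping is handled, conservation of the spectrum of $K_u^2$ under the flow of $\mathscr{F}(x)$, and hence the commutations $\{\mathscr{F}(x),\sigma_k^2\} = 0$ and $\{\ell_j,\sigma_k^2\} = 0$, follow immediately from the standard Lax theory.
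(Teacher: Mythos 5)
Your proposal follows the same two-step strategy as the paper: compute $X_{\mathscr{F}(x)}$ by differentiating the resolvent and converting $(H_h(w^i)\mid w^j)$-type terms into pairings against $h$, then verify $K_{\dot u}=[B_u^x,K_u]$ by expanding the relevant $K_{\cdot}$'s into Toeplitz products using $w^1=H_u(w^0)$, $w^0=1+xH_u(w^1)$, and the $(I-\Pi)(\bar z f)=\bar z\,\overline{\Pi(\bar f)}$ identity. One small slip: your trilinear identity should read $(H_a f\mid g)=\int_\mathbb{T} a\,\overline{fg}=\overline{(\Pi(fg)\mid a)}=(a\mid\Pi(fg))$, not $(\Pi(fg)\mid a)$; the missing conjugation is harmless once everything is packaged as $\re(G\mid h)$, but would mislead the bookkeeping if taken literally when separating the $\Ji$ and $\overline{\Ji}$ contributions.
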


Notice that in \eqref{champ_F}, \eqref{aux}, as well as in the rest of this paragraph, we omit the $x$ dependence of functionals and functions, in order to shorten our formulae.

We will make use of an elementary lemma which we recall here :
\begin{lemme}\label{barbar}
Let $h\in L^2_+$. Then $(I-\Pi)(\bar{z}h)=\bar{z}\overline{\Pi(\overline{h})}$.
\end{lemme}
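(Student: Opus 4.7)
The plan is to prove this by a direct Fourier-series computation, which suffices because both sides depend linearly and continuously on $h \in L^2_+$. Writing $h(x) = \sum_{k\ge 0} a_k e^{ikx}$, the left-hand side unfolds as
\[ \bar z\, h = a_0 e^{-ix} + \sum_{k\ge 1} a_k e^{i(k-1)x}, \]
so all the terms with $k\ge 1$ sit at nonnegative frequencies and are killed by $I-\Pi$, leaving $(I-\Pi)(\bar z h) = a_0 \bar z$.

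For the right-hand side, $\bar h = \sum_{k\ge 0} \overline{a_k} e^{-ikx}$ is supported on nonpositive frequencies, so $\Pi(\bar h)$ retains only the constant mode $\overline{a_0}$. Conjugating gives $\overline{\Pi(\bar h)} = a_0$, and multiplying by $\bar z$ yields $\bar z\, \overline{\Pi(\bar h)} = a_0 \bar z$, which matches.

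There is no real obstacle here; the only point requiring care is the bookkeeping between $\Pi$, complex conjugation, and multiplication by $\bar z$. A slightly more conceptual way to present the same identity, which I would likely prefer in the write-up, is to record first the general fact that for any $f\in L^2$ one has $(I-\Pi)(f)=\bar z\,\overline{\Pi(\bar z \bar f)}$ (both sides being the projection of $f$ onto strictly negative Fourier modes, re-expressed via the antiunitary $f\mapsto \bar z\bar f$ which swaps $L^2_+$ and its orthogonal complement). Applying this with $f = \bar z h$, so that $\bar z\bar f = \bar z\cdot z \bar h = \bar h$, gives the lemma at once. This is in fact exactly the form of the identity invoked in the proof of Lemma \ref{formulation-alternative} and in the derivation of equation \eqref{evol-blaschke}, so recording it as a named lemma serves the rest of the paper.
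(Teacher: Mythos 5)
Your proof is correct. The paper states Lemma \ref{barbar} as an elementary recalled fact without supplying a proof, so there is no argument to compare against; your direct Fourier computation (both sides reduce to $a_0\bar z$) is exactly what is needed. Your alternative formulation, the identity $(I-\Pi)(f)=\bar z\,\overline{\Pi(\bar z\bar f)}$ for general $f\in L^2$, is also correct and is in fact the version implicitly invoked in the proof of Lemma \ref{orbite-bl-local}, where it is applied to $h=\bar u u_k^K\notin L^2_+$; recording that more general form would indeed serve the paper slightly better than the $L^2_+$-restricted statement.
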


\begin{proof}[Proof of Theorem \ref{commutation-sigma}.]
Recall that
\[\mathscr{F}=\frac{2Q+x(\Jp)^2-x^2|\Ji|^2}{1+x\Jp }.\]
First of all, we compute, for $h\in L_+^2$,
\[\begin{aligned} d_u\Jp\cdot h&= (x(I-xH_u^2)^{-1}(H_uH_h+H_hH_u)(I-xH_u^2)^{-1}(u)\vert u)+2\re ((I-xH_u^2)^{-1}(u)\vert h) \\
&=2x\re (H_u(w^1)\vert H_h(w^1))+2\re (w^1\vert h) \\
&=2\re (w^0w^1\vert h) = 2\re (w^0H_u(w^0)\vert h).
\end{aligned}\]
Similarly, as $\Ji=((I-xH_u^2)^{-1}(u)\vert H_u(u))$,
\[\begin{aligned} d_u\Ji\cdot h&= (x(H_uH_h+H_hH_u)w^1\vert H_u(w^1))+2 (w^1\vert H_u(h))+(w^1\vert H_h(u)) \\
&=x(H_u^2(w^1)\vert H_h(w^1))+x(h\vert (H_u(w^1))^2)+2(h\vert H_u(w^1))+(uw^1 \vert h) \\
&=((H_u(w^0))^2\vert h) +x(h\vert (H_u(w^1))^2)+2(h\vert H_u(w^1)),
\end{aligned}\]
where we used that $xH_u^2(w^1)=w^1-u=H_u(w^0)-u$.

We are now ready to compute
\begin{multline*} d_u\mathscr{F}\cdot h=-\frac{\mathscr{F}}{\J }\cdot 2x\re (h\vert w^0H_u(w^0))+\frac{1}{\J }\left(4\re (h\vert u)+2x\Jp \cdot 2\re (h\vert w^0H_u(w^0)) \right) \\
-\frac{2x^2}{\J } \re \left(\Ji (h\vert (H_u(w^0))^2)+ x\overline{\Ji }(h\vert (H_u(w^1))^2) + 2\overline{\Ji }(h\vert H_u(w^1))\right).
\end{multline*}
Hence,
\begin{multline*}
d_u\mathscr{F}\cdot h=\frac{1}{\J }\re \left(h\middle\vert 4u+x(4\Jp -2\mathscr{F})w^0H_u(w^0)\right. \\ \left. -2x^2\overline{\Ji}(H_u(w^0))^2-2x^3\Ji (H_u(w^1))^2-4x^2\Ji H_u(w^1)\right),
\end{multline*}
which is equivalent to formula \eqref{champ_F}.

Now, assume that $\dot{u}=X_{\mathscr{F}}(u)$, and compute $i\frac{d}{dt}K_u(h)$, for $h\in L^2_+$. Step by step, we have, as in \cite{ann}, and using Lemma \ref{barbar} :
\[\begin{aligned}
\Pi (\bar{z}w^0H_u(w^0)\bar{h})&= \Pi (\bar{z}(1+xH_u^2(w^0))H_u(w^0)\bar{h})\\
&=\Pi(\bar{z}u\overline{w^0h})+x\Pi (H_u^2(w^0)[\Pi +(I-\Pi)](\bar{z}H_u(w^0)\bar{h})) \\
&=T_{\overline{w^0}}K_u(h)+xH_u^2(w^0)\Pi(\bar{z}u\overline{w^0h})+x \Pi(H_u^2(w^0)\bar{z}\overline{\Pi (\overline{H_u(w^0)}h)})\\
&=(1+xH_u^2(w^0))T_{\overline{w^0}}K_u(h)+x \Pi(\bar{z}u\overline{w^1}\overline{\Pi (\overline{w^1}h)})\\
&=T_{w^0}T_{\overline{w^0}}K_u(h)+xK_uT_{w^1}T_{\overline{w^1}}(h),
\end{aligned}\]
which we can symmetrize as
\[ K_{w^0H_u(w^0)}=\frac{1}{2}(T_{w^0}T_{\overline{w^0}}K_u+K_uT_{w^0}T_{\overline{w^0}})+\frac{x}{2}(T_{w^1}T_{\overline{w^1}}K_u+K_uT_{w^1}T_{\overline{w^1}}).\]
Then,
\[\begin{aligned}
\Pi(\bar{z}(H_u(w^0))^2\bar{h})&=H_u(w^0)\Pi(\bar{z}H_u(w^0)\bar{h})+\Pi(u\overline{w^0}(I-\Pi)(\bar{z}H_u(w^0)\bar{h}))\\
&=w^1\Pi(\overline{w^0}\Pi(\bar{z}u\bar{h}))+\Pi(\bar{z}u\overline{w^0\Pi(\overline{w^1} h)}),
\end{aligned}\]
so
\[ K_{(H_u(w^0))^2}=T_{w^1}T_{\overline{w^0}}K_u+K_uT_{w^0}T_{\overline{w^1}}.\]
Replacing $w^0$ by $w^1$ in the previous expression, we get
\[\begin{aligned} xK_{(H_u(w^1))^2}&=xT_{H_u(w^1)}T_{\overline{w^1}}K_u+xK_uT_{w^1}
T_{\overline{H_u(w^1)}}\\
&=T_{w^0}T_{\overline{w^1}}K_u+K_uT_{w^1}T_{\overline{w^0}}-T_{\overline{w^1}}K_u-K_uT_{w^1}.
\end{aligned}\]
The minus terms will exactly be compensated by
\[ 2\Pi(\bar{z}H_u(w^1)\bar{h})=2\Pi(\bar{z}u\overline{w^1h})= \Pi(\overline{w^1}\Pi(\bar{z}u\bar{h}))+\Pi (\bar{z}u\overline{\Pi(w^1h)}),\]
or equivalently, $2K_{H_u(w^1)}=T_{\overline{w^1}}K_u+K_uT_{w^1}$. This completes the proof of \eqref{bux}-\eqref{aux}.
\end{proof}

\subsection{Commutation between the additional conservation laws}
In this paragraph, we conclude the proof of Theorem \ref{invol-thm} by proving that $\{\mathscr{F}(x),\mathscr{F}(y)\}=0$ when $x\neq y\in \mathbb{R}$ are fixed. Because of \eqref{generating} and by the preceding commutation identities, it will be enough to show that $\{\ell_j,\ell_k\}=0$ when $j,k\geq 1$.

\begin{thm}\label{FF}
For any $x\neq y\in\mathbb{R}$, we have $\lbrace \mathscr{F}(x),\mathscr{F}(y)\rbrace = 0$.
\end{thm}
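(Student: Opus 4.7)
The strategy is to reduce the Poisson commutation of $\mathscr{F}(x)$ with $\mathscr{F}(y)$ to that of the individual $\ell_k$'s, and then to verify the latter through the action-angle coordinates inherited from the cubic Szeg\H{o} equation.

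Since the Lax pair of Theorem \ref{commutation-sigma} implies that the Hamiltonian flow of $\mathscr{F}(x)$ preserves the spectrum of $K_u^2$, one has $\{\mathscr{F}(x),\sigma_k^2\}=0$ for every $k\geq 1$ and every $x\in\mathbb{R}$. Combined with the classical identity $\{\sigma_j^2,\sigma_k^2\}=0$ from \cite{GGtori} and with the partial fraction expansion of Proposition \ref{generating-prop}, successive differentiation in $x$ yields $\{\ell_j,\sigma_k^2\}=0$ for every $j,k$. The Leibniz rule then gives
\[
\{\mathscr{F}(x),\mathscr{F}(y)\}=xy\sum_{j,k}\frac{\{\ell_j,\ell_k\}}{(1-x\sigma_j^2)(1-y\sigma_k^2)},
\]
so the theorem follows if we can prove $\{\ell_j,\ell_k\}=0$ for all $j,k\geq 1$.

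For this, I would work on the open dense subset $\mathcal{V}^*(d)\subset\mathcal{V}(d)$ where all singular values are simple, every $\sigma_k$ is $K$-dominant, and every $\rho_j$ is $H$-dominant: there the $\ell_k$'s are smooth functions of $u$, and the identity can be extended to $H^{1/2}_+$ afterwards by continuity of $\mathscr{F}(x)$. On $\mathcal{V}^*(d)$, the inverse spectral formula of Theorem \ref{formule-inverse} provides a real-analytic diffeomorphism with an open subset of $\mathbb{R}^d\times\mathbb{T}^d$, the coordinates being the actions $(\rho_j^2,\sigma_k^2)$ and the angles $(\varphi_j,\psi_k)$ of the Blaschke products $\Psi_j^H=e^{i\varphi_j}$ and $\Psi_k^K=e^{i\psi_k}$; it is shown in \cite{GGtori, livrePG} that, up to an affine change of angle variables, these are Darboux coordinates for the symplectic form of $\mathcal{V}(d)$. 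In particular, the $d$ actions mutually Poisson-commute, so only mixed action/angle derivatives contribute to any bracket.

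The key step is then to expand $\ell_k$ in these coordinates: by Lemma \ref{formulation-alternative}, $\ell_k=\|u_k^K\|^2\bigl(2Q+\sigma_k^2-|\xi_k|^2\bigr)$, where $Q$ and $\|u_k^K\|^2$ are rational functions of the actions alone (Proposition \ref{norme-projetes}). Only $|\xi_k|^2$ carries angle dependence; using Proposition \ref{blaschke-dimension} and the identity $H_u(h)=SK_u(h)+\overline{(h|u)}\cdot 1$, one expresses $\xi_k=(H_u(u)|u_k^K)/\|u_k^K\|^2$ as an explicit rational function of the actions and of the phases $e^{i\varphi_j},e^{i\psi_\ell}$. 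With actions mutually commuting, $\{\ell_j,\ell_k\}$ reduces to a finite sum of mixed action/angle cross-terms coming from the $|\xi|^2$ pieces alone. The main obstacle is the verification that this remaining sum vanishes; this is the heart of the computation, and should follow from the $j\leftrightarrow k$ symmetry together with the specific rational dependence of $\xi_k$ on the phase data dictated by the inverse spectral theorem. Once $\{\ell_j,\ell_k\}=0$ is secured on $\mathcal{V}^*(d)$, the generating series formula finally delivers $\{\mathscr{F}(x),\mathscr{F}(y)\}=0$ on $H^{1/2}_+$.
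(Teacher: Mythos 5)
Your reduction of $\{\mathscr{F}(x),\mathscr{F}(y)\}=0$ to $\{\ell_j,\ell_k\}=0$ is sound: combining the partial-fraction expansion of Proposition \ref{generating-prop} with $\{\ell_j,\sigma_k^2\}=0$ (from Theorem \ref{commutation-sigma}) and $\{\sigma_j^2,\sigma_k^2\}=0$, the Leibniz rule indeed gives $\{\mathscr{F}(x),\mathscr{F}(y)\}=xy\sum_{j,k}\{\ell_j,\ell_k\}\,(1-x\sigma_j^2)^{-1}(1-y\sigma_k^2)^{-1}$, and the implication is reversible. Working on the generic open stratum of $\mathcal{V}(d)$ in the cubic-Szeg\H{o} action-angle coordinates is also the right setting. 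But the argument stops exactly where the real work begins. You write that the vanishing of the mixed action/angle terms ``should follow from the $j\leftrightarrow k$ symmetry together with the specific rational dependence of $\xi_k$ on the phase data.'' That is not a proof. The Poisson bracket is antisymmetric, so $j\leftrightarrow k$ symmetry of $\{\ell_j,\ell_k\}$ would indeed force it to vanish --- but that symmetry is precisely what needs to be proved, and it does not follow from any general principle (for instance $\{p,q\}$ is not symmetric under swapping $p$ and $q$). Establishing it is of the same difficulty as showing the bracket vanishes directly.

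In the paper the cancellation is carried out explicitly, and it does not pass through the $\ell_k$'s at all. One writes $\mathscr{F}(x)=\bigl(2J_2+x\Jp(x)^2-x^2|\Ji(x)|^2\bigr)/\J(x)$ and notices that $J_2$, $\J(x)$, $\Jp(x)$ are rational in the actions alone (since $\J(x)=\prod_j(1-x\sigma_j^2)/(1-x\rho_j^2)$ and $x\Jp(x)=\J(x)-1$), so the only angle dependence sits in $|\Ji(x)|^2$. The nontrivial content is the computation of $\{\Ji(x),\Ji(y)\}$ and $\{\Ji(x),\overline{\Ji(y)}\}$ (Lemma \ref{JiJi}, which in turn uses Lemma \ref{ZZ} and the already-known commutation $\{\K(x),\K(y)\}=0$), followed by an algebraic cancellation exploiting $x\Jp(x)=\J(x)-1$. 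Your proposal would have to reproduce an equivalent computation for the $\xi_k$'s, in a less convenient parametrisation; as written, the central step is simply asserted. So the gap is not a small omission: it is the entire content of the theorem.
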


To prove such a result, we will restrict again on the dense subset of $H^{1/2}_+$ which consists of symbols $u\in H^{1/2}_+$ such that both $K_u$ and $H_u$ have finite rank $N$ for some $N\in\mathbb{N}$, and so that the singular values of $H_u$ and $K_u$ satisfy
\[ \rho_1^2> \sigma_1^2 > \rho_2^2 > \sigma_2^2 > \dots > \rho_{N}^2 > \sigma_{N}^2>0. \]
Recall that, under this assumption of genericity, we can write
\[ u=\sum_{j=1}^{N}u_j^H=\sum_{k=1}^N u_k^K,\]
where $u_j^H$ (resp. $u_k^K$) is the projection of $u$ onto the one-dimensional eigenspace of $H_u^2$ (resp. $K_u^2$) associated to $\rho_j^2$ (resp. $\sigma_k^2$). In that case, Proposition \ref{blaschke-dimension} also simplifies, and Blaschke products are just real numbers modulo $2\pi$ : there exists angles $(\varphi_1, \dots ,\varphi_{N},\psi_1,\dots ,\psi_N)\in\mathbb{T}^{2N}$, such that $H_u(u_j^H)=\rho_je^{i\varphi_j}u_j^H$ and $K_u(u_k^K)=\sigma_ke^{i\psi_k}u_k^K$, for $j\in \llbracket 1,N\rrbracket$ and $k\in \llbracket 1,N\rrbracket$. Moreover, on this open subset of generic sates of $\mathcal{V}(2N)$, the symplectic form reads $\omega = \sum_{j=1}^{N}d(\rho_j^2/2)\wedge d\varphi_j+\sum_{k=1}^N d(\sigma^2_k/2)\wedge d\psi_k$ (see \cite{GGtori, livrePG}).

We begin by proving a lemma inspired by the work of Haiyan Xu \cite{Xu3}.
\begin{lemme}\label{ZZ}
For any $x\neq y\in\mathbb{R}$, we have
\begin{multline*}
\left\lbrace |\Z (x)|^2,|\Z(y)|^2\right\rbrace = \\
\frac{4\im (\Z(x)\overline{\Z(y)} )}{x-y}\left[ x\J(x)^2-y\J(y)^2+x^2|\Z (x)|^2-y^2|\Z(y)|^2 \right] .
\end{multline*}
\end{lemme}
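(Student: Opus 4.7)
We prove this by reducing to a computation in the Darboux chart of the cubic Szeg\H{o} hierarchy. By density, it suffices to treat the open dense subset $\mathcal{U} \subset \mathcal{V}(2N)$ on which the singular values $\rho_1^2 > \sigma_1^2 > \dots > \rho_N^2 > \sigma_N^2 > 0$ are all simple. On $\mathcal{U}$, the work \cite{GGtori, livrePG} provides global Darboux coordinates $(\rho_j^2/2, \varphi_j, \sigma_k^2/2, \psi_k)_{1 \leq j,k \leq N}$, where $\varphi_j, \psi_k \in \mathbb{T}$ are the angles of the degree-zero Blaschke products $\Psi_j^H, \Psi_k^K$ of Proposition \ref{blaschke-dimension}; the symplectic form reads $\omega = \sum_j d(\rho_j^2/2) \wedge d\varphi_j + \sum_k d(\sigma_k^2/2) \wedge d\psi_k$.

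The first step is to express $\J (x)$ and $\Z (x)$ in these coordinates. Using $H_u(1) = u$, the orthogonal decomposition $1 = 1_\infty + \sum_j 1_j^H$, and the antilinearity of $H_u$, one finds $1_j^H = \rho_j^{-1} e^{-i\varphi_j} u_j^H$, hence $(u_j^H \mid 1) = \rho_j^{-1} e^{i\varphi_j}\|u_j^H\|^2$. Setting $a_j := \|u_j^H\|^2/\rho_j$, which depends only on the actions by Proposition \ref{norme-projetes}, this gives
\begin{align*}
\Z (x) &= \sum_{j=1}^N \frac{a_j\, e^{i\varphi_j}}{1-x\rho_j^2}, \\
\J (x) &= \|1_\infty\|^2 + \sum_{j=1}^N \frac{a_j/\rho_j}{1-x\rho_j^2},
\end{align*}
so $\J$ is angle-independent and $\Z$ depends on the $\varphi_j$'s but not on the $\psi_k$'s. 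In particular only the $(\rho_l^2, \varphi_l)$-sector contributes to the Poisson bracket.

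Direct differentiation yields, with $R_l(x) := (1-x\rho_l^2)^{-1}$,
\[\partial_{\varphi_l} |\Z (x)|^2 = 2 a_l R_l(x)\, \im\bigl(e^{-i\varphi_l}\Z (x)\bigr),\]
\[\partial_{\rho_l^2/2} |\Z (x)|^2 = 4x a_l R_l(x)^2\, \re\bigl(e^{-i\varphi_l}\Z (x)\bigr) + 2\re\bigl(\overline{\Z (x)}\,\Omega_l(x)\bigr),\]
where $\Omega_l(x) := \sum_j (\partial_{\rho_l^2/2} a_j) R_j(x) e^{i\varphi_j}$ encodes the action-only dependence. Plugging into the canonical Darboux bracket and using the identity
\[\im(e^{-i\varphi_l}\Z (x))\re(e^{-i\varphi_l}\Z (y)) - \re(e^{-i\varphi_l}\Z (x))\im(e^{-i\varphi_l}\Z (y)) = \im\bigl(\Z (x)\overline{\Z (y)}\bigr),\]
which is \emph{independent of $l$}, factors out the prefactor $\im(\Z (x)\overline{\Z (y)})$ from all the resulting double sums in $j, k$.

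It remains to identify the scalar coefficient. Using the partial fraction identity $R_l(x) R_l(y) = (xR_l(x) - yR_l(y))/(x-y)$ to extract the $1/(x-y)$, and evaluating $\partial_{\rho_l^2/2}a_j$ from the Vandermonde-type formula for $\|u_j^H\|^2$ in Proposition \ref{norme-projetes}, one collects the remaining $l$-sums into rational combinations of $\J$ and $|\Z|^2$. The main obstacle is this final bookkeeping: the action-dependent factor must collapse into exactly $x\J (x)^2 - y\J (y)^2 + x^2|\Z (x)|^2 - y^2|\Z (y)|^2$. A cleaner alternative I would pursue is a residue argument: both sides of the claimed identity are rational in $x, y$ with simple poles at the points $\{1/\rho_m^2\}$, so matching residues at each pole (together with boundary behavior at $\infty$) reduces the identity to a finite list of algebraic relations among $\{\rho_l^2, \sigma_l^2\}$ that follow from the inverse spectral formula (Theorem \ref{formule-inverse}).
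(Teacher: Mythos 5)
Your approach (a direct computation of $\{|\Z(x)|^2,|\Z(y)|^2\}$ in the Darboux chart) is fundamentally different from the paper's and, as written, it is incomplete. The paper never tries to compute this bracket head-on. Instead it exploits two facts already known from the cubic Szeg\H{o} theory — $\{\J(x),\J(y)\}=0$ and $\{\K(x),\K(y)\}=0$ — together with the algebraic relation $\K(x)=\J(x)-x|\Z(x)|^2/\J(x)$ coming from \eqref{KJ}. Expanding $0=\{\K(x),\K(y)\}$ via Leibniz then isolates $\{|\Z(x)|^2,|\Z(y)|^2\}$ in terms of the much simpler bracket $\{\J(x),|\Z(y)|^2\}$, and the latter is easy because $\J$ depends only on the actions (formula \eqref{JZ}). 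This algebraic detour is what lets the paper avoid precisely the step you flag as ``the main obstacle''.

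You set up the Darboux expansion of $\Z(x)$ correctly in outline, and you are right that the $\psi_k$'s drop out and that $\im\bigl(\Z(x)\overline{\Z(y)}\bigr)$ factors out via the trigonometric identity you quote. But the proof stops there: the collapse of the remaining action-dependent double sum to $x\J(x)^2-y\J(y)^2+x^2|\Z(x)|^2-y^2|\Z(y)|^2$ is asserted, not proved, and you explicitly say it is ``the main obstacle.'' That identification is exactly the content of the lemma, so leaving it as bookkeeping is a genuine gap — note in particular that the analogous Darboux computation for $\{\Z(x),\overline{\Z(y)}\}$, carried out in the proof of Lemma \ref{JiJi}, hinges on a non-obvious cancellation of the cross terms ($\sum_{l\neq j}$) when summing over $j$, which you would have to reprove here. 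The alternative residue-matching plan is also only sketched and is not obviously closed (one would need to control the asymptotics as $x,y\to\infty$ and check that the remaining algebraic relations hold, which is not immediate). Finally, there is a small but real sign slip: from $H_u(u_j^H)=\rho_j e^{i\varphi_j}u_j^H$ and the antilinearity of $H_u$ one gets $(u_j^H\vert 1)=\rho_j^{-1}e^{-i\varphi_j}\|u_j^H\|^2$, so the correct expansion carries $e^{-i\varphi_j}$, not $e^{i\varphi_j}$; this changes the sign of every $\im(\cdot)$ appearing in your derivatives and would have to be tracked. The cleanest fix is to abandon the direct attack and instead use $\{\K(x),\K(y)\}=0$ as the paper does.
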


\begin{proof}[Proof]
Recall that we defined $\K (x)=((I-xK_u^2)^{-1}(1)\vert 1)$, and that it obeys
\begin{equation*}
 \K(x)=\J (x)-x\frac{|\Z (x)|^2}{\J(x)}
\end{equation*}
by \eqref{KJ}.

From the theory of the cubic Szeg\H{o} equation, it is known that $\{ \J(x),\J(y) \}=0$ (see \cite{ann}) and $\{ \K(x),\K(y)\}=0$ (see \cite{Xu3}). In view of \eqref{KJ}, this last identity gives
\begin{align*}
0&=\left\lbrace \J (x)-x\frac{|\Z (x)|^2}{\J(x)},\J (y)-y\frac{|\Z (y)|^2}{\J(y)}\right\rbrace \\
&=-y\left\lbrace \J(x), \frac{|\Z (y)|^2}{\J(y)}\right\rbrace +x \left\lbrace \J(y),\frac{|\Z (x)|^2}{\J(x)}\right\rbrace +xy\left\lbrace \frac{|\Z (x)|^2}{\J(x)},\frac{|\Z (y)|^2}{\J(y)} \right\rbrace .
\end{align*}
First of all, we have to compute $\lbrace \J (x), |\Z (y)|^2\rbrace$. This is done in \cite{Xu3}, but for the seek of completeness, we recall the argument. We have
\[ \Z(x)= \left( \sum_{j=1}^{N} \frac{u_j^H}{1-x\rho_j^2}\middle\vert \sum_{j=1}^{N} \frac{(1\vert u_j^H)u_j^H}{\|u_j^H\|^2}\right) =\sum_{j=1}^{N}\frac{\|u_j^H\|^2e^{-i\varphi_j}}{\rho_j(1-x\rho_j^2)},\]
because $(u_j^H\vert 1)=\rho_j^{-1}e^{-i\varphi_j}(H_u(u_j^H)\vert 1)$, and $(H_u(u_j^H)\vert 1)=(H_u(1)\vert u_j^H)=\|u_j^H\|^2$. Besides, we know (\cite{livrePG}) that $\J(x)=\prod_{j=1}^{N} \frac{1-x\sigma_j^2}{1-x\rho_j^2}$, we can compute directly from the expression of $\omega$ :
\begin{align*}
\lbrace \J (x), \Z (y)\rbrace &= \sum_{j=1}^{N} \frac{2x\J (x)}{1-x\rho_j^2}\cdot \left( -i \frac{\|u_j^H\|^2e^{-\varphi_j}}{\rho_j (1-y\rho_j^2)}\right) \\
&=-2ix\J (x)  \sum_{j=1}^{N} \frac{\|u_j^H\|^2e^{-i\varphi_j}}{\rho_j(x-y)}\left( \frac{x}{1-x\rho_j^2}-\frac{y}{1-y\rho_j^2}\right) \\
&=-\frac{2ix}{x-y}\J (x)(x\Z (x) -y\Z(y)).
\end{align*}
This yields
\begin{equation}\label{JZ}
\lbrace \J (x), |\Z (y)|^2\rbrace= 2\re (\overline{\Z (y)}\lbrace \J (x), \Z (y)\rbrace )= \frac{4x^2\J (x)}{x-y}\im (\Z(x)\overline{\Z (y)}).
\end{equation}
Secondly, we write
\begin{multline*} \left\lbrace \frac{|\Z (x)|^2}{\J(x)},\frac{|\Z (y)|^2}{\J(y)} \right\rbrace= \left\lbrace \frac{1}{\J (x)},|\Z(y)|^2\right\rbrace \frac{|\Z (x)|^2}{\J (y)}\\-\left\lbrace \frac{1}{\J (y)},|\Z(x)|^2\right\rbrace \frac{|\Z (y)|^2}{\J (x)}+\frac{\lbrace |\Z (x)|^2,|\Z(y)|^2\rbrace}{\J(x)\J(y)},
\end{multline*}
and we have, using \eqref{JZ},
\[  \left\lbrace \frac{1}{\J (x)},|\Z(y)|^2\right\rbrace \frac{|\Z (x)|^2}{\J (y)} = -\frac{4x^2|\Z(x)|^2}{(x-y)\J(x)\J(y)}\im (\Z(x) \overline{\Z(y)}).\]

Now we can go back to $0=\{ \K(x),\K(y)\}$, and get
\begin{multline*}
 0= \left(-\frac{4x^2y\J (x)}{(x-y)\J (y)}+\frac{4xy^2\J (y)}{(x-y)\J (x)}\right) \im (\Z(x)\overline{\Z (y)})\\
 + \left(\frac{-4x^3y|\Z(x)|^2+4xy^3|\Z(y)|^2}{(x-y)\J(x)\J(y)}\right) \im (\Z(x)\overline{\Z (y)})+\frac{xy\lbrace |\Z (x)|^2,|\Z(y)|^2\rbrace}{\J(x)\J(y)},
\end{multline*}
and this ends the proof of Lemma \ref{ZZ}.
\end{proof}

\begin{lemme}\label{JiJi}
For $x\neq y\in \mathbb{R}$, we have
\begin{align}
\{ \Ji(x) , \Ji(y)\}&=-\frac{2i}{x-y}\left[ x\Ji (x)-y\Ji(y)\right]^2 \\
\{ \Ji(x), \overline{\Ji(y)} \}&=\frac{2i}{x-y}\left[ \frac{\J(x)^2}{x}-\frac{\J(y)^2}{y} -\frac{1}{x}+\frac{1}{y}\right].
\end{align}
\end{lemme}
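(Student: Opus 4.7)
The plan is to compute both brackets directly in the action-angle coordinates that parametrise the dense open subset of $\mathcal{V}(2N)$ of symbols with simple, strictly alternating singular values $\rho_1 > \sigma_1 > \cdots > \rho_N > \sigma_N$ (as in the proof of Theorem \ref{FF}); the identities will then extend to $H^{1/2}_+$ by density. The starting point is the elementary identity
\[
\Ji(x) = \frac{\Z(x) - J_1}{x},
\]
which follows at once from $xH_u^2(w^1(x)) = w^1(x) - u$ after taking scalar product with $1$. Combined with the expression for $\Z(x)$ derived in the proof of Lemma \ref{ZZ}, this yields
\[
\Ji(x) = \sum_{j=1}^{N} e^{-i\varphi_j}\,b_j(x),\qquad b_j(x) := \frac{\rho_j\|u_j^H\|^2}{1-x\rho_j^2}.
\]
Only the $(\rho_j^2/2,\varphi_j)$ Poisson pairs contribute. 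I will use throughout that $\partial_{\varphi_j}\Ji(x) = -ie^{-i\varphi_j}b_j(x)$, together with the formula $\partial_{\rho_j^2}b_k(x) = b_k(x)/(\rho_k^2-\rho_j^2)$ for $k\neq j$, which comes from the dependence of $\|u_k^H\|^2$ on $\rho_j^2$ given by Proposition \ref{norme-projetes}.

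For the first identity, expanding $\{\Ji(x),\Ji(y)\}$ via the Leibniz rule produces a double sum over $(j,k)$, which I separate into diagonal ($k=j$) and off-diagonal ($k\neq j$) pieces. The elementary algebraic identity
\[
b_j(y)b_k(x) - b_j(x)b_k(y) = \frac{(x-y)(\rho_k^2-\rho_j^2)\,\rho_j\rho_k\|u_j^H\|^2\|u_k^H\|^2}{(1-x\rho_j^2)(1-y\rho_j^2)(1-x\rho_k^2)(1-y\rho_k^2)}
\]
exactly cancels the factor $(\rho_k^2-\rho_j^2)^{-1}$ coming from $\partial_{\rho_j^2}b_k$, so that the off-diagonal contributions merge with the diagonal ones into a single double sum that factorises as a perfect square. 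A partial-fraction rewriting in $\rho_j^2$ then identifies the inner sum with $[x\Ji(x) - y\Ji(y)]/(x-y)$, yielding the formula of the first item.

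For the second identity, the analogous expansion of $\{\Ji(x),\overline{\Ji(y)}\}$ produces off-diagonal terms carrying phases $e^{\pm i(\varphi_j-\varphi_k)}$; swapping $j\leftrightarrow k$ and using the antisymmetry of $(\rho_k^2-\rho_j^2)^{-1}$ shows they cancel pairwise, leaving only the angle-free diagonal sum
\[
\{\Ji(x),\overline{\Ji(y)}\} = 2i\sum_j \partial_{\rho_j^2}\bigl[b_j(x)b_j(y)\bigr].
\]
The main obstacle is to evaluate this sum in closed form. The key algebraic input is the partial-fraction decomposition
\[
\J(x) = 1 + x\sum_{l=1}^{N}\frac{\|u_l^H\|^2}{1-x\rho_l^2},
\]
obtained from the residues of the rational function $\prod_l(t-\sigma_l^2)/(t-\rho_l^2)$ at $t=\rho_j^2$, together with the resulting identification of the double-pole coefficient of $\J(x)^2$ at $x=1/\rho_j^2$ as $\|u_j^H\|^4/\rho_j^8$. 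Both sides of the claimed identity are rational functions of $(x,y)$ with double poles only at $x=1/\rho_j^2$ and $y=1/\rho_j^2$, and with no pole at $x=y$; matching their singular parts at each such pole via the preceding residue calculation, together with a check of the asymptotic behaviour at infinity and the value at a convenient base point (e.g. $x=0$), forces them to coincide and completes the proof.
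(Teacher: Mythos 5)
Your treatment of the first bracket reproduces the paper's argument: expand $\Ji(x)$ over the $\rho_j$-eigenspaces, use $\partial_{\rho_j^2}b_k=b_k/(\rho_k^2-\rho_j^2)$ for $k\neq j$, and the elementary algebraic identity to merge the diagonal and off-diagonal contributions into a perfect square. That part is correct and essentially identical to the paper's computation.

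For the second bracket you diverge genuinely from the paper, and this is where the argument has a real gap. The paper never evaluates the diagonal sum directly: it computes $\{\Z(x),\Z(y)\}$ instead, feeds it together with Lemma \ref{ZZ} into the real/imaginary decomposition of $\{|\Z(x)|^2,|\Z(y)|^2\}$, uses (as you correctly rediscover) the off-diagonal phase cancellation to conclude that $\{\Z(x),\overline{\Z(y)}\}$ is purely imaginary, reads off $\{\Z(x),\overline{\Z(y)}\}=\frac{2i}{x-y}(x\J(x)^2-y\J(y)^2)$, and finally polarises via $\Z=J_1+x\Ji$. Your plan, by contrast, must actually evaluate $2i\sum_j\partial_{\rho_j^2}[b_j(x)b_j(y)]$, and your proposed closing step is logically insufficient. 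A rational function of $x$ vanishing at infinity with double poles at the $N$ points $1/\rho_j^2$ has $2N$ free parameters; matching the double-pole coefficients ($\|u_j^H\|^4/\rho_j^8$, the only quantity your residue sketch actually produces) plus one value at $x=0$ gives only $N+1$ constraints. You must match the full principal parts, including the simple-pole coefficients, and these involve both the regular part of $\J$ at $1/\rho_j^2$ and $\partial_{\rho_j^2}\|u_j^H\|^2$. Reconciling them rests on the nontrivial consequence of Proposition \ref{norme-projetes}
\[
\|u_j^H\|^2\Bigl(\sum_{l}\frac{1}{\rho_j^2-\sigma_l^2}-\sum_{l\neq j}\frac{1}{\rho_j^2-\rho_l^2}\Bigr)=1+\sum_{l\neq j}\frac{\|u_l^H\|^2}{\rho_j^2-\rho_l^2},
\]
which you neither state nor derive. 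A cleaner way to execute your strategy would be to observe $b_j(x)b_j(y)=\frac{\|u_j^H\|^2}{x-y}\bigl(\beta_j(x)-\beta_j(y)\bigr)$ with $\beta_j(z):=\|u_j^H\|^2/(1-z\rho_j^2)$, reducing everything to the one-variable identity $\sum_j\partial_{\rho_j^2}\bigl(\|u_j^H\|^4/(1-z\rho_j^2)\bigr)=(\J(z)^2-1)/z$; but even so, the displayed identity is needed to finish, and as written your proof does not close.
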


\begin{proof}[Proof]
We expand $\Ji(x)$ thanks to the decomposition $u=\sum_j u_j^H$ :
\begin{equation}\label{J3_expanded}
 \Ji(x) = \sum_{j=1}^{N} \frac{\rho_j\|u_j^H\|^2e^{-i\varphi_j}}{1-x\rho_j^2}.
\end{equation}
and the expression of $\|u_j^H\|^2$ is given in Proposition \ref{norme-projetes} :
\[ \|u_j^H\|^2=\frac{\prod_{l=1}^{N} (\rho_j^2-\sigma_l^2)}{\prod_{l\neq j}(\rho_j^2-\rho_l^2)}.\]
Thus we compute
\[
\frac{\partial \Ji(x)}{\partial (\rho_j^2/2)}=\frac{\|u_j^H\|^2e^{-i\varphi_j}}{\rho_j(1-x\rho_j^2)}+\frac{2x\rho_j\|u_j^H\|^2e^{-i\varphi_j}}{(1-x\rho_j^2)^2}+\frac{\rho_je^{-i\varphi_j}}{1-x\rho_j^2}\frac{\partial \|u_j^H\|^2}{\partial (\rho_j^2/2)}
+2\sum_{l\neq j}\frac{\rho_l\|u_l^H\|^2e^{-i\varphi_l}}{(\rho_l^2-\rho_j^2)(1-x\rho_l^2)}
\]
We also have $\frac{\partial \Ji(y)}{\partial \varphi_j}= -i\frac{\rho_j\|u_j^H\|^2e^{-i\varphi_j}}{1-y\rho_j^2}$, hence, symmetrizing in $x$ and $y$, we get
\begin{multline*}
\frac{\partial \Ji(x)}{\partial (\rho_j^2/2)}\frac{\partial \Ji(y)}{\partial \varphi_j} -\frac{\partial \Ji(y)}{\partial (\rho_j^2/2)}\frac{\partial \Ji(x)}{\partial \varphi_j}=\\ -2i\rho_j^2\|u_j^H\|^4e^{-2i\varphi_j}\left[ \frac{x}{(1-x\rho_j^2)^2(1-y\rho_j^2)}-\frac{y}{(1-x\rho_j^2)(1-y\rho_j^2)^2}\right] \\-2i\sum_{l\neq j}\frac{\rho_j\rho_l\|u_j^H\|^2\|u_l^H\|^2e^{-i(\varphi_j+\varphi_l)}}{\rho_l^2-\rho_j^2}\left[ \frac{1}{(1-x\rho_l^2)(1-y\rho_j^2)}-\frac{1}{(1-x\rho_j^2)(1-y\rho_l^2)}\right] .
\end{multline*}
Now,
\begin{align*}
\frac{x}{(1-x\rho_j^2)^2(1-y\rho_j^2)}-\frac{y}{(1-x\rho_j^2)(1-y\rho_j^2)^2}&=\frac{x-y}{(1-x\rho_j^2)^2(1-y\rho_j^2)^2},\\
\frac{1}{(1-x\rho_l^2)(1-y\rho_j^2)}-\frac{1}{(1-x\rho_j^2)(1-y\rho_l^2)}&=\frac{(\rho_l^2-\rho_j^2)(x-y)}{(1-x\rho_j^2)(1-x\rho_l^2)(1-y\rho_j^2)(1-y\rho_l^2)},
\end{align*}
which yields
\begin{multline*}\frac{\partial \Ji(x)}{\partial (\rho_j^2/2)}\frac{\partial \Ji(y)}{\partial \varphi_j} -\frac{\partial \Ji(y)}{\partial (\rho_j^2/2)}\frac{\partial \Ji(x)}{\partial \varphi_j}\\
=-2i(x-y)\sum_{l=1}^{N}\frac{\rho_j\rho_l\|u_j^H\|^2\|u_l^H\|^2e^{-i(\varphi_j+\varphi_l)}}{(1-x\rho_j^2)(1-x\rho_l^2)(1-y\rho_j^2)(1-y\rho_l^2)}.
\end{multline*}
Summing over $j$ then gives
\begin{align*}
\{ \Ji(x) , \Ji(y)\}&= -2i(x-y)\left( \sum_{j=1}^{N}\frac{\rho_j\|u_j^H\|^2e^{-i\varphi_j}}{(1-x\rho_j^2)(1-y\rho_j^2)}\right)^2 \\
&= -2i(x-y)\left( (I-xH_u^2)^{-1}(I-yH_u^2)^{-1}(u)\vert H_u(u)\right)^2\\
&= -\frac{2i}{x-y}\left( (I-xH_u^2)^{-1}[(I-yH_u^2)-(I-xH_u^2)](I-yH_u^2)^{-1}(u)\vert 1\right)^2\\
&=-\frac{2i}{x-y}(\Z (x)-\Z(y))^2=-\frac{2i}{x-y}(x\Ji (x)-y\Ji(y))^2.
\end{align*}
This is the first part of Lemma \ref{JiJi}.

We turn to the second part. We will first compute $\lbrace \Z(x),\Z(y)\rbrace$, then we will deduce $\lbrace \Z(x),\overline{\Z(y)}\rbrace$ from Lemma \ref{ZZ} and finally get the expression of $\lbrace \Ji(x),\overline{\Ji(y)}\rbrace$. The same computation as above also provides a formula :
\begin{align*}
\{ \Z(x) , \Z(y)\}&= -2i(x-y)\left( (I-xH_u^2)^{-1}(I-yH_u^2)^{-1}(u)\vert 1\right)^2\\
&= -2i(x-y)\left( (I-xH_u^2)^{-1}[(I-xH_u^2)+xH_u^2](I-yH_u^2)^{-1}(u)\vert 1\right)^2\\
&=-2i(x-y)\left(\Z (y)+\frac{x}{x-y}(\Z(x) -\Z(y)) \right)^2\\
&=-\frac{2i}{x-y}(x\Z (x)-y\Z(y))^2.
\end{align*}
Now,
\begin{multline*}\lbrace |\Z (x)|^2,|\Z(y)|^2\rbrace =2\re \left( \overline{\Z(x)}\overline{\Z(y)}\lbrace \Z (x),\Z (y)\rbrace\right. \\+\left. \overline{\Z(x)}\Z(y)\lbrace \Z (x),\overline{\Z (y)}\rbrace \right) ,
\end{multline*}
and
\[2\re \left( \overline{\Z(x)}\overline{\Z(y)}\lbrace \Z (x),\Z (y)\rbrace\right) =\frac{4\im (\Z(x)\overline{\Z(y)} )}{x-y}\left( x^2|\Z(x)|^2-y^2|\Z(y)|^2\right) ,\]
so by Lemma \ref{ZZ},
\begin{equation}\label{JiJibar}
2\re \left(  \overline{\Z(x)}\Z(y)\lbrace \Z (x),\overline{\Z (y)}\rbrace \right)=\frac{4\im (\Z(x)\overline{\Z(y)} )}{x-y}\left[ x\J(x)^2-y\J(y)^2 \right] .
\end{equation}
Denote by $f(x,y):=\lbrace \Z (x),\overline{\Z (y)}\rbrace$. As above, we compute, for $j\in\llbracket 1,N\rrbracket$,
%By the properties of the Poisson bracket, we have $f(y,x)=-\overline{f(x,y)}$
\begin{multline*}
\frac{\partial \Z(x)}{\partial (\rho_j^2/2)}\frac{\partial \overline{\Z(y)}}{\partial \varphi_j} -\frac{\partial \overline{\Z(y)}}{\partial (\rho_j^2/2)}\frac{\partial \Z(x)}{\partial \varphi_j}=\\ 
\frac{-2i\|u_j^H\|^4}{\rho_j^4(1-x\rho_j^2)(1-y\rho_j^2)}+\frac{2ix\|u_j^H\|^4}{\rho_j^2(1-x\rho_j^2)^2(1-y\rho_j^2)}+\frac{2iy\|u_j^H\|^4}{\rho_j^2(1-x\rho_j^2)(1-y\rho_j^2)^2} + \frac{4i\|u_j^H\|^2\frac{\partial \|u_j^H\|^2}{\partial \rho_j^2}}{\rho_j^2(1-x\rho_j^2)(1-y\rho_j^2)}
\\+ 2i\sum_{l\neq j}\frac{\|u_j^H\|^2\|u_l^H\|^2}{\rho_j\rho_l(\rho_l^2-\rho_j^2)}\left( \frac{e^{i(\varphi_j-\varphi_l)}}{(1-x\rho_l^2)(1-y\rho_j^2)}+\frac{e^{-i(\varphi_j-\varphi_l)}}{(1-x\rho_j^2)(1-y\rho_l^2)}\right) .
\end{multline*}
The crucial fact is the following : when we sum over $j$, the term of the last line (involving $\sum_{l\neq j}$) vanishes. All the remaining terms are purely imaginary, and we proved that $f(x,y)\in i\mathbb{R}$. We write $f(x,y)=ig(x,y)$. Therefore,
\[ 2\re \left(  \overline{\Z(x)}\Z(y)\lbrace \Z (x),\overline{\Z (y)}\rbrace \right)=2\im (\Z(x)\overline{\Z(y)} )\cdot g(x,y) .\]
and by \eqref{JiJibar},
\[\lbrace \Z (x),\overline{\Z (y)}\rbrace =\frac{2i}{x-y}\left[ x\J(x)^2-y\J(y)^2 \right] .\]

To conclude, observe that $\Z(x)=J_1+x\Ji(x)$, with $J_1=\Z(0)$. Hence
\begin{align*}
xy\lbrace \Ji (x),\overline{\Ji (y)}\rbrace&=\lbrace \Z (x),\overline{\Z (y)}\rbrace-\lbrace \Z (x),\overline{J_1}\rbrace-\lbrace J_1,\overline{\Z (y)}\rbrace+\lbrace J_1,\overline{J_1}\rbrace \\
&=\frac{2i}{x-y}\left[ x\J(x)^2-y\J(y)^2 \right]-2i\J(x)^2-2i\J(y)^2+2i\\
&=\frac{2i}{x-y}\left[ y\J(x)^2-x\J(y)^2 +x-y\right] .
\end{align*}
Dividing by $xy$ gives the claim and completes the proof.
\end{proof}

We are now ready to prove Theorem \ref{FF}.
\begin{proof}[Proof of Theorem \ref{FF}]
Begin by noticing that, since $J_2$, $\J$, $\Jp$ only depend on the actions $\rho_j^2/2$ and $\sigma_k^2/2$, all the brackets which don't involve $\Ji$ are zero.

We thus only need to compute
\begin{itemize}
\item $\{J_2, |\Ji(x)|^2\}\equiv 0$, since the functional $|\Ji(x)|^2$ is invariant under phase rotation of functions.
\item Because of the product formula for $\J (x)$ and \eqref{J3_expanded}, we have
\begin{align*}
\left\lbrace \frac{1}{\J(x)},|\Ji(y)|^2\right\rbrace &=\sum_{j=1}^{N}\frac{-4x}{\J (x)(1-x\rho_j^2)}\im \left( \overline{\Ji (y)}\frac{\rho_j\|u_j^H\|^2e^{-i\varphi_j}}{1-y\rho_j^2}\right)\\
&=\sum_{j=1}^{N}\frac{-4x}{\J (x)}\im \left( \overline{\Ji (y)}\frac{\|u_j^H\|^2e^{-i\varphi_j}}{\rho_j(x-y)}\left[ \frac{1}{1-x\rho_j^2}-\frac{1}{1-y\rho_j^2} \right] \right)\\
&=\frac{-4x}{(x-y)\J (x)}\im \left( \overline{\Ji (y)}[\Z(x)-\Z(y)] \right) \\
&=\frac{-4x^2}{(x-y)\J (x)}\im \left( \Ji(x) \overline{\Ji (y)} \right).
\end{align*}
\item A similar trick gives
\begin{align*}
\left\lbrace \Jp(x)^2,|\Ji(y)|^2\right\rbrace &= 2\Jp(x)\left\lbrace \frac{\J(x)}{x},|\Ji(y)|^2\right\rbrace \\
&=\frac{2\Jp(x)}{x}\sum_{j=1}^{N}\frac{4x\J (x)}{1-x\rho_j^2}\im \left( \overline{\Ji (y)}\frac{\rho_j\|u_j^H\|^2e^{-i\varphi_j}}{1-y\rho_j^2}\right)\\
&=\frac{8x\Jp(x)\J (x)}{x-y}\im \left( \Ji (x)\overline{\Ji (y)} \right) .
\end{align*}
\item Finally, Lemma \ref{JiJi} enables to calculate
\begin{align*}
2\re \left( \overline{\Ji(x)}\right. & \left.\overline{\Ji(y)}\lbrace \Ji (x),\Ji (y)\rbrace \right) \\
&=\frac{4}{x-y}\im \left( \overline{\Ji(x)}\overline{\Ji(y)}(x\Ji(x)-y\Ji(y))^2 \right)\\
&=\frac{4}{x-y}\left[x^2|\Ji(x)|^2-y^2|\Ji(y)|^2 \right] \im \left( \Ji(x) \overline{\Ji (y)} \right) ,
\end{align*}
and
\begin{align*}
2\re \left( \overline{\Ji(x)}\right. &\left. \Ji(y)\lbrace \Ji (x),\overline{\Ji (y)}\rbrace \right) \\
&=\frac{4}{x-y}\left[ \frac{\J(x)^2}{x}-\frac{\J(y)^2}{y} +\frac{1}{x}-\frac{1}{y}\right] \im \left( \Ji(x)\overline{\Ji(y)} \right) ,
\end{align*}
so that
\begin{multline*}
\left\lbrace |\Ji(x)|^2,|\Ji(y)|^2\right\rbrace =\\
\frac{4}{x-y}\left[ x^2|\Ji(x)|^2-y^2|\Ji(y)|^2 +\frac{\J(x)^2}{x}-\frac{\J(y)^2}{y} +\frac{1}{x}-\frac{1}{y}\right] \im \left( \Ji(x)\overline{\Ji(y)} \right) .
\end{multline*}
\end{itemize}
At last, we can compute the main Poisson bracket, expanding it as a double product :
\begin{align*}
\lbrace \mathscr{F}(x),\mathscr{F}(y)\rbrace =& -\frac{y^2(2J_2+x\Jp(x)^2-x^2|\Ji(x)|^2)}{\J(y)}\left\lbrace \frac{1}{\J(x)}, |\Ji(y)|^2\right\rbrace \\
&-\frac{xy^2}{\J(x)\J(y)}\lbrace \Jp(x)^2,|\Ji(y)|^2\rbrace\\
&-\frac{x^2(2J_2+y\Jp(y)^2-y^2|\Ji(y)|^2)}{\J(x)} \left\lbrace |\Ji(x)|^2, \frac{1}{\J(y)}\right\rbrace \\
&-\frac{x^2y}{\J(x)\J(y)}\lbrace |\Ji(x)|^2,\Jp(y)^2\rbrace\\
&+\frac{x^2y^2}{\J(x)\J(y)}\lbrace |\Ji(x)|^2,|\Ji(y)|^2\rbrace .
\end{align*}
\end{proof}
Summing up, and taking obvious cancellations into account, we have
\begin{multline*}
(x-y)\J(x)\J(y)\lbrace \mathscr{F}(x),\mathscr{F}(y)\rbrace =\\ 4xy \im \left( \Ji(x)\overline{\Ji(y)} \right) \left[ x^2y\Jp(x)^2 - 2xy\Jp(x)\J(x)-xy^2\Jp(y)^2\right. \\ \left. +2xy\Jp(y)^2\J(y)+ y\J(x)^2-x\J(y)^2+x-y \right] .
\end{multline*}
Now remember that $x\Jp(x) = \J(x)-1$. So
\begin{align*}
x^2y\Jp(x)^2 - &2xy\Jp(x)\J(x)+ y\J(x)^2-y\\
&= y\left[ (\J (x)-1)^2-2\J(x)(\J(x)-1)+\J(x)^2-1 \right] \\
&=0,
\end{align*}
and the same holds interverting $x$ and $y$. This concludes the proof of Theorem \ref{FF}.

\vspace{0,5cm}
\bibliography{mabiblio}
\bibliographystyle{plain}

\vspace{0,5cm}
\textsc{Département de mathématiques et applications, École normale supérieure,
CNRS, PSL Research University, 75005 Paris, France}

\textit{E-mail address: }\texttt{joseph.thirouin@ens.fr}

\end{document}